\definecolor{lightblue}{rgb}{0.22,0.45,0.70}
\newtheorem{remark}{Remark}[section]
\newtheorem{lemma}{Lemma}[section]
\newtheorem{theorem}{Theorem}[section]
\newtheorem{proposition}{Proposition}[section]
\newtheorem{problem}{Problem}
\newcommand{\de}{~\textrm{d}e}
\newcommand{\df}{~\textrm{d}f}
\newcommand{\dP}{~\textrm{d}P}
\def\E{P}
\begin{document}
\title{A virtual element method on polyhedral meshes for the sixth-order elliptic problem}
\author{
Franco Dassi\thanks{Dipartimento di Matematica e Applicazioni, Università degli studi di Milano Bicocca, Via Roberto Cozzi 55 - 20125 Milano, Italy. E-mail:
{\tt franco.dassi@unimib.it}},\quad
David Mora \thanks{ Departamento  de  Matem\'aticas, Universidad del B\'io-B\'io, Concepci\'on, Chile. E-mail:
{\tt dmora@ubiobio.cl}.}
Carlos Reales\thanks{Departamento  de  Matem\'aticas, Universidad de C\'ordoba, Monter\'ia,  Colombia. E-mail:
{\tt creales@correo.unicordoba.edu.co.}},\quad
Iv\'an Vel\'asquez \thanks{ Departamento  de  Ciencias  B\'asicas, Universidad del Sin\'u-El\'ias   Bechara   Zain\'um,    Monter\'ia,  Colombia. E-mail:
{\tt ivanvelasquez@unisinu.edu.co}.}
}
\date{}
\maketitle
\begin{abstract}
In this work we analyze a virtual element method on polyhedral meshes for solving the sixth-order elliptic problem with simply supported boundary conditions. 
We apply the Ciarlet-Raviart  arguments to introduce an auxiliary unknown $\sigma:=-\Delta^2 u$ and to search the main uknown $u$ in the $H^2\cap H_0^1$ Sobolev space. 
The virtual element discretization is well possed on a $C^1\times C^0$ virtual  element spaces. We also provide the convergence and error estimates results. Finally, we report a series of numerical tests to verify the performance of numerical scheme.
\end{abstract}
\noindent
{\bf Key words}: Sixth-order elliptic equations; Ciarlet-Raviarth method; virtual element method; polyhedral meshes; error estimates.
\maketitle
\setcounter{equation}{0}
\section{Introduction}
\label{SEC:Introduction}
This paper deals the sixth-order elliptic problem with simply supported boundary conditions, which it reads as follows:
\begin{subequations}\label{SixthEq}
\begin{align}
-\Delta^3 u = f &\qquad \mbox{in}\,\, \Omega, \label{SixthEq_a}\\
u=\Delta u=0&\qquad \mbox{on}\,\, \partial\Omega, \label{SixthEq_b}\\
\Delta^2 u=0&\qquad \mbox{on}\,\, \partial\Omega, \label{SixthEq_c}
\end{align}
\end{subequations}
where $\Omega\subset \mathbb{R}^{3}$ is a bounded domain with polyhedral boundary $\Gamma:=\partial \Omega$, and $f\in H^{-1}(\Omega)$. 
There are several numerical methods for solving the sixth-order elliptic equation \eqref{SixthEq_a} \cite{DroniouSixht2019,GudiSixthIPG2011,LiuXu,Chang2005}.  
For instance, the authors of \cite{DroniouSixht2019} studied a mixed finite element formulation based on the Ciarleth-Raviart  method, they decrease the regularity of the main unknown to approximate it with $H^1-$conforming  Lagrange spaces. 
In \cite{GudiSixthIPG2011} it was proposed a $C^0-$interior penalty method to studied the sixht-order elliptic equation with clamped boundary conditions.

Sixth-order problems has several application in different areas such as the thin-film equations (\cite{Barret,Liu}), the phase field crystal model \cite{Backofen,Cheng},fluid flows \cite{Tagliabue}
geometric design \cite{KuUgWi},\cite{YiZhYou}. among others. 
In the course of this work it will be possible to show the mathematical differences that arise when considering three-dimensional domains. Besides the natural interest generated by the formulations in this type of domain, interesting applications of this type of problem, as geometric design previously mentioned , can be highlighted. The use of elliptical PDEs for shape design is very different from conventional spline-based method. The idea behind this method is that the design of the shape is effectively treated as a mathematical boundary value problem, i.e. the shapes are produced by finding the solutions to a suitably chosen elliptic PDE that satisfies certain boundary conditions \cite{Ugail} . This technique has achieved great relevance in the generation of surfaces given the discretization of the operator associated with the elliptic PDE, it is a process of averaging the solution neighborhood of the PDE that guarantees that the surface obtained will have a certain degree of smoothness depending on the order of the PDE \cite{ Castro}. In \cite{Bloor1989} introduces the original formulation of the Bloor Wilson PDE method, which consists of producing a parametric surface $X(u, v)$ by finding the solution to a PDE of the form 
\begin{equation} 
\left(\dfrac{\partial^2}{\partial u^2}+\dfrac{\partial^2}{\partial v^2}\right)^{r} X(u, v) = 0,
\end{equation}
where $u$ and $v$ represent the parametric coordinates of the surface ates, which are then mapped into physical space; namely., $(x(u, v), y(u, v), z(u, v)).$ $r$ determines the order of the PDE and the smoothness of the surface.

The virtual element method (VEM) is a new powerful technology in the numerical analysis for solving partial differential equations introduced for first time in \cite{BBCMMR2013}. The VEM can be see an extension of the standard finite element method on polygonal and polyhedral meshes. The VEM on polyhedral meshes  has aroused a very significant interest in the scientific community dedicated to solving problems in different areas: fluid \cite{Dassi2020stokes3D,DASSI_pressure3D,DASSI_Bend3DDarcy}, elasticity, \cite{DASSI_LovadinaReissner}, electromagnetic \cite{Dassi3dLowestorderMagnetostatic,DASSI_3dMaxwell,Dassi3DMagnetostaticsSIAM}, eigenvalue problems~\cite{DV_camwa2022,GMV2018,MVsiam2021}.

In this work we propose and analyze a new mixed variational formulation on $H^2\times H^1$ sobolev space in $3\mathrm{D}$ for solving the  sixth-order elliptic equation \eqref{SixthEq_a} with simply supported boundary conditions (cf.  \eqref{SixthEq_b}-\eqref{SixthEq_c}). 
We follow the arguments presented in \cite{ciarlet} to introduce  an auxiliary unknown $\sigma=-\Delta^2 u$ in $ \Omega$ to reduce the sixth-order equations \eqref{SixthEq_a}-\eqref{SixthEq_c} to  fourth order problem. 
Then, we apply the $C^1$ and $C^0-$ VEMs established on \cite{HO_C0VEM_Polyhedral} and \cite{C1VEM_Polyhedral}, respectively, to propose a well posed discrete virtual element scheme. We derive optimal error estimates in $H^2$ and $H^1-$norms. In addition, we apply the duality arguments to obtain additional regularity results both unknown $u$ and $\sigma$. 
Since conforming elements for solving the sixth-order elliptic equation \eqref{SixthEq_a}-\eqref{SixthEq_c} need $C^2$ elements which have a very high computational cost to be implemented in   Finite Element Method (FEM) is that our discrete scheme results an interesting alternative.

The outline of this is paper is as follows. 
In Section~\ref{Sec:SixthCont}, we introduce the weak formulation associated to the sixth-order elliptic problem and prove wellposedness using a decomposition into two varational problems. Next, in Section~\ref{VEMspaces}, we present the definitions of the $C^1$ and $C^0$ virtual element spaces.
Then in Section~\ref{SEC:DisSpecProb}, a virtual element discrete formulation by mimicking the analysis developed for the continuous problem are presented in this section. 
In addition, in Section~\ref{SecConvErrEst} we prove that the numerical scheme provides a correct approximation and establish optimal order error estimates for approximated solutions. 
Finally, in Section~\ref{SEC:NumericalResults}, 
we report some numerical tests that confirm the theoretical analysis developed.

\paragraph{Notations} Throughout of this work we will use standard notations for Sobolev spaces, norms and semi-norms~\cite{adams2003sobolev}. 
Moreover, we will use the notation $a \lesssim b$ to mean that there exists a positive constant $C$ independent of $a, b$, and the size of the elements in the mesh, such that $a\leq C b$. 
We stress that such constant can be take different values in the different occurrences of ``$\lesssim$''.
We use the standard notations for operators gradient, $\nabla$, normal derivative, $\partial_{\boldsymbol{n}}$, and the hessian, $\nabla^2$. 
In addition, given a positive integer $d=1,2$ or 3, for all $\mathcal{O}\subseteq \mathbb{R}^d$,
we denote the polynomial space in $d$-variables of degree lower or equal to $k$ as $\mathbb{P}_{k}(\mathcal{O})$.

\section{The sixth-order elliptic continuous problem}\label{Sec:SixthCont}
We inspire on Ciarlet and Raviarth method (see e.g. \cite[Section~7.1]{ciarlet}) to write a mixed weak formulation associated to the system of equations \eqref{SixthEq_a}-\eqref{SixthEq_b}, we introduce the auxiliary unknown
\begin{equation}
    \sigma:=-\Delta^2 u \in \Omega.\label{def_sigma}
\end{equation}
It is well known that since $\Omega$ is a convex domain we have that $u\in H^{5}\cap H_0^1(\Omega)$ if $f\in H^{-1}(\Omega)$, and it holds that
\begin{equation}
    ||u ||_{5,\Omega}\lesssim ||f||_{-1,\Omega}\label{RegOfu}
\end{equation}
(see for instance \cite{AMV20,gazzola}). Then, since $u\in H^5(\Omega)\cap H_0^1(\Omega)$ (cf. \eqref{RegOfu}) it holds $\sigma\in H^1(\Omega)$. 

Next, we apply the Laplacian operator on the identity $\eqref{def_sigma}$, and then, we apply the equations \eqref{SixthEq_a} and \eqref{SixthEq_c} to arrive to the following second-order problem for $\sigma$
\begin{subequations}\label{SecondEq}
\begin{align}
\Delta \sigma=f\quad \mbox{in }\Omega,\label{SecondEq_a}\\
\sigma=0\quad \mbox{on }\Gamma.\label{SecondEq_b}
\end{align}
\end{subequations}
Then, we multiply the equation \eqref{SecondEq_a} by $ v   \in H_0^1(\Omega)$, integrate by parts, and use the boundary condition  \eqref{SecondEq_b} to  arrive to the following weak formulation. Find $\sigma \in H_0^1(\Omega)$ such that 
\begin{equation}
\int_\Omega \nabla \sigma \cdot \nabla  v  \ d\Omega=- \big\langle f,v\big\rangle_{-1,1,\Omega}  \quad \forall  v   \in H_0^1(\Omega).\label{VarForH1Cont}     
\end{equation}
Whenever the data $f\in L^2(\Omega)$, we can rewrite the duality pairing between $L^2(\Omega)$ and itself from the following $L^2-$inner product  
$\langle  f,v \rangle =\int_{\Omega}fv.$

On the other hand, the equations \eqref{SixthEq_a},\eqref{SixthEq_b} and \eqref{def_sigma} yields the following fourth-order problem for $u$ with simply supported boundary conditions.
\begin{subequations}\label{FourtEq}
\begin{align}
\Delta^2 u
+
\sigma
=0 \quad \mbox{in } \Omega,\label{FourtEq_a}\\
u=\Delta u=0\quad \mbox{on } \Gamma\label{FourtEq_b}.
\end{align}
\end{subequations}

It is well known that system of equations \eqref{FourtEq_a}-\eqref{FourtEq_b} gives the following weak formulation (see for instance \cite{ciarlet,G}). Find $u\in H^2(\Omega)\cap H_0^1(\Omega)$ such that
\begin{equation}
\int_\Omega \nabla^2u :\nabla^2\psi  \ d\Omega
+
\int_\Omega \sigma \psi \ d\Omega
=0\quad \forall \psi\in H^2(\Omega)\cap H_0^1(\Omega).\label{VarForH2Cont}
\end{equation}

Now, we will write a weak  formulation associated to the problems \eqref{SecondEq} and \eqref{FourtEq}. First, we define the spaces $V:=H_0^1(\Omega)$ and $\Psi:=H^2(\Omega)\cap H_0^1(\Omega)$ endowed with their standard norms. 

Next, from \eqref{VarForH1Cont} and \eqref{VarForH2Cont} a variational formulation associated to the problems \eqref{SecondEq} and \eqref{FourtEq} (and as consequence a variational formulation for \eqref{SixthEq}) can be reads as follows. Find $\sigma\in V$ and $u \in  \Psi$ such that 
\begin{equation}\label{VarForContH2H1}
a^{\Delta}(u,\psi) +a^{\nabla}(\sigma, v  )+a^{0}(\sigma,\psi)=F( v  )\quad  \forall  v   \in V, \ \forall \psi\in \Psi,
\end{equation}
where we have defined the bilinear and linear forms
\begin{align}
&a^{\Delta}: \Psi\times \Psi\to \mathbb{R};
\qquad a^{\Delta}(u,\psi):=\int_{\Omega}\nabla^2 u:\nabla^2 \psi \  d\Omega
&\quad \forall u,v\in \Psi,
\label{defaD}\\
&a^{\nabla}:V\times V\to \mathbb{R};
\qquad a^{\nabla}(\sigma, v  ):=\int_{\Omega}\nabla \sigma\cdot \nabla v  \  d\Omega
&\quad \forall \sigma, v   \in V,
\label{defan}\\
&a^{0}:V\times \Psi\to \mathbb{R};
\qquad a^{0}(\sigma,\psi):=\int_{\Omega}\sigma \psi\  d\Omega
&\quad \forall \sigma\in V \ , \forall \psi\in \Psi, 
\label{defa0}\\
&F: V\to \mathbb{R};
\qquad  \qquad  F( \psi  ):=- \big\langle f,v\big\rangle_{-1,1,\Omega} 
&\quad \forall  v   \in V.
\label{defF}
\end{align}

The following result shows some  properties of the bilinear forms defined above.
\begin{lemma}\label{lemboundcont}
The bilinear forms $a^{\Delta}(\cdot,\cdot), a^{\nabla}(\cdot,\cdot),a^{0}(\cdot,\cdot)$, and the linear form $F(\cdot)$ satisfy the following properties.
\begin{subequations}
\begin{align}
&|a^{\Delta}(u,\psi)|\lesssim  ||u ||_{2,\Omega} ||\psi ||_{2,\Omega},\label{Bound_aDCont}\\
&|a^{\nabla}(\sigma, v  )|\lesssim  ||\sigma ||_{1,\Omega} || v   ||_{1,\Omega},\label{Bound_anCont}\\
&|a^{0}(\sigma,\psi)|\lesssim ||\sigma ||_{1,\Omega} ||\psi ||_{2,\Omega},\label{Bound_a0Cont}\\
&|F( v  )|\lesssim ||f ||_{-1,\Omega} || v   ||_{1,\Omega},\label{BoundFCont}\\
&  a^{\Delta}(\psi,\psi)\geq C_{P_2}||\psi||_{2,\Omega}^{2}, \label{ellip_aDCont}\\
& a^{\nabla}(\sigma,\sigma)\geq C_{P_1} || \sigma||_{1,\Omega}^{2},\label{ellip_anCont}
\end{align}
\end{subequations}
for all $\sigma, v  \in V$ and $u,\psi \in \Psi$.
\end{lemma}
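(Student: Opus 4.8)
The plan is to obtain the four continuity estimates \eqref{Bound_aDCont}--\eqref{BoundFCont} by a direct application of the Cauchy--Schwarz inequality, and the two ellipticity estimates \eqref{ellip_aDCont}--\eqref{ellip_anCont} by Poincaré-type arguments, the $H^2$-ellipticity being the only genuinely nontrivial point.

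For the continuity bounds I would apply Cauchy--Schwarz to each integral defining the corresponding form. For $a^{\Delta}$ this gives $|a^{\Delta}(u,\psi)| \le \|\nabla^2 u\|_{0,\Omega}\,\|\nabla^2\psi\|_{0,\Omega} = |u|_{2,\Omega}\,|\psi|_{2,\Omega} \le \|u\|_{2,\Omega}\,\|\psi\|_{2,\Omega}$, and the argument for $a^{\nabla}$ is identical with gradients and the $H^1$-seminorm. For $a^{0}$ I would bound the $L^2$-inner product by $\|\sigma\|_{0,\Omega}\,\|\psi\|_{0,\Omega}$ and then dominate each factor by the corresponding full Sobolev norm, namely $\|\sigma\|_{0,\Omega}\le\|\sigma\|_{1,\Omega}$ and $\|\psi\|_{0,\Omega}\le\|\psi\|_{2,\Omega}$. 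The estimate \eqref{BoundFCont} on $F$ is simply the definition of the dual norm in $H^{-1}(\Omega)$, since $|F(v)| = |\langle f,v\rangle_{-1,1,\Omega}| \le \|f\|_{-1,\Omega}\,\|v\|_{1,\Omega}$.

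For the ellipticity of $a^{\nabla}$ I would invoke the classical Poincaré--Friedrichs inequality on $V=H_0^1(\Omega)$: there exists $C>0$ with $\|\sigma\|_{0,\Omega}\le C\,|\sigma|_{1,\Omega}$, so that $\|\sigma\|_{1,\Omega}^2 = \|\sigma\|_{0,\Omega}^2 + |\sigma|_{1,\Omega}^2 \lesssim |\sigma|_{1,\Omega}^2 = a^{\nabla}(\sigma,\sigma)$, which yields the constant $C_{P_1}$.

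The main obstacle is the $H^2$-ellipticity \eqref{ellip_aDCont}, where $a^{\Delta}(\psi,\psi)=|\psi|_{2,\Omega}^2$ and one must recover the full norm $\|\psi\|_{2,\Omega}^2$, including the lower-order terms $\|\psi\|_{0,\Omega}^2$ and $|\psi|_{1,\Omega}^2$, using only the boundary condition encoded in $\psi\in H_0^1(\Omega)$. The key step is to integrate by parts: since $\psi$ vanishes on $\Gamma$, $|\psi|_{1,\Omega}^2 = \int_\Omega |\nabla\psi|^2\,d\Omega = -\int_\Omega \psi\,\Delta\psi\,d\Omega \le \|\psi\|_{0,\Omega}\,\|\Delta\psi\|_{0,\Omega}$, and $\|\Delta\psi\|_{0,\Omega}\lesssim |\psi|_{2,\Omega}$. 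Combining this with the Poincaré inequality $\|\psi\|_{0,\Omega}\lesssim|\psi|_{1,\Omega}$ gives $|\psi|_{1,\Omega}^2 \lesssim |\psi|_{1,\Omega}\,|\psi|_{2,\Omega}$, hence $|\psi|_{1,\Omega}\lesssim|\psi|_{2,\Omega}$ and in turn $\|\psi\|_{0,\Omega}\lesssim|\psi|_{2,\Omega}$. Summing the three contributions yields $\|\psi\|_{2,\Omega}^2 \lesssim |\psi|_{2,\Omega}^2 = a^{\Delta}(\psi,\psi)$, i.e. the asserted inequality with constant $C_{P_2}$.
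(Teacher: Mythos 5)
Your proof is correct and follows essentially the same route as the paper's: Cauchy--Schwarz for the continuity bounds \eqref{Bound_aDCont}--\eqref{BoundFCont}, and Poincar\'e-type arguments on $V$ and $\Psi$ for the ellipticity estimates \eqref{ellip_aDCont}--\eqref{ellip_anCont}. The only difference is that where the paper merely cites the generalized Poincar\'e inequality on $\Psi=H^2(\Omega)\cap H_0^1(\Omega)$, you supply a valid self-contained proof of it via the integration-by-parts chain $|\psi|_{1,\Omega}^2=-\int_\Omega\psi\,\Delta\psi\,d\Omega\le\|\psi\|_{0,\Omega}\|\Delta\psi\|_{0,\Omega}\lesssim\|\psi\|_{0,\Omega}|\psi|_{2,\Omega}$ combined with $\|\psi\|_{0,\Omega}\lesssim|\psi|_{1,\Omega}$, which legitimately uses only $\psi|_\Gamma=0$ and the $H^2$ regularity needed to kill the boundary term.
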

\begin{proof}
Estimates \eqref{Bound_aDCont}-\eqref{BoundFCont} follow from  Cauchy-Schwarz inequality. 
Ellipticity estimates \eqref{ellip_aDCont} and \eqref{ellip_anCont} can be obtained from generalized Poincaré inequality applied on $\Psi$ and $V$ spaces, respectively (see for instance  \cite[Theorem 2.31]{gazzola} and \cite{Brezis_A_Func}).
\end{proof}

With the aim to prove the well-posedness of weak formulation~\eqref{VarForContH2H1}, we
decomposed it into the following problems:
\begin{problem}\label{ContProbH1}
Find $\sigma\in V$ such that 
\begin{align}
a^{\nabla}(\sigma, v  )=F( v  ) \quad  \forall  v   \in V.\nonumber
\end{align}
\end{problem}
\begin{problem}\label{ContProbH2}
Find $u\in  \Psi$ such that 
\begin{align}
\displaystyle a^{\Delta}(u,\psi) =-a^{0}(\sigma,\psi) \quad  \forall \psi\in \Psi.\nonumber
\end{align}
\end{problem}

It is clear that the Lax-Milgram theorem and estimates \eqref{Bound_anCont}, \eqref{BoundFCont}, and \eqref{ellip_anCont} allow us to deduce  the well-posedness of Problem~\ref{ContProbH1}. Next, we apply once more the Lax-Milgram theorem, and use the  inequalities \eqref{Bound_aDCont}, \eqref{Bound_a0Cont}, \eqref{ellip_aDCont}, and the uniqueness of $\sigma\in V$ on Problem~\ref{ContProbH1} to obtain the unique solution $u\in \Psi$ of Problem~\ref{ContProbH2}. 
Therefore, we have proved the following theorem.
\begin{theorem}\label{UniqCont}
Given $f\in L^2(\Omega)$ there exists a unique $(\sigma,u)\in V\times \Psi$ such that the weak formulation \eqref{VarForContH2H1} holds true.
\end{theorem}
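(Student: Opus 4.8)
The plan is to exploit the triangular structure already built into the variational formulation~\eqref{VarForContH2H1} by decoupling it into Problem~\ref{ContProbH1} and Problem~\ref{ContProbH2}, and then apply the Lax--Milgram theorem twice in sequence. First I would observe that \eqref{VarForContH2H1} is equivalent to the pair of problems: the first one involves only the unknown $\sigma$ through $a^{\nabla}(\cdot,\cdot)$ and $F(\cdot)$, while the second one determines $u$ once $\sigma$ is known. This equivalence follows by testing \eqref{VarForContH2H1} with pairs $(v,0)$ and $(0,\psi)$ separately: the choice $\psi=0$ isolates $a^{\nabla}(\sigma,v)=F(v)$ for all $v\in V$, recovering Problem~\ref{ContProbH1}, and the choice $v=0$ isolates $a^{\Delta}(u,\psi)+a^{0}(\sigma,\psi)=0$ for all $\psi\in\Psi$, recovering Problem~\ref{ContProbH2}.

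Next I would handle Problem~\ref{ContProbH1}. Since $V=H_0^1(\Omega)$ is a Hilbert space, the bilinear form $a^{\nabla}(\cdot,\cdot)$ is bounded by \eqref{Bound_anCont} and coercive by \eqref{ellip_anCont}, and the functional $F(\cdot)$ is bounded by \eqref{BoundFCont}, the hypotheses of the Lax--Milgram theorem are met. Hence there exists a unique $\sigma\in V$ solving Problem~\ref{ContProbH1}. This $\sigma$ is now a fixed, known element of $V$.

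For Problem~\ref{ContProbH2} I would regard the right-hand side $\psi\mapsto -a^{0}(\sigma,\psi)$ as a linear functional on $\Psi$. With $\sigma$ fixed, estimate \eqref{Bound_a0Cont} shows that $|a^{0}(\sigma,\psi)|\lesssim \|\sigma\|_{1,\Omega}\|\psi\|_{2,\Omega}$, so this functional is bounded on $\Psi$. Since $\Psi=H^2(\Omega)\cap H_0^1(\Omega)$ is a Hilbert space and $a^{\Delta}(\cdot,\cdot)$ is bounded by \eqref{Bound_aDCont} and coercive by \eqref{ellip_aDCont}, a second application of the Lax--Milgram theorem yields a unique $u\in\Psi$ solving Problem~\ref{ContProbH2}. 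Combining the two uniquely determined components $\sigma$ and $u$, and reversing the decoupling argument, I obtain a unique pair $(\sigma,u)\in V\times\Psi$ satisfying \eqref{VarForContH2H1}, which is exactly the claim.

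The only delicate point I anticipate is verifying that the \emph{full} variational problem~\eqref{VarForContH2H1} is genuinely equivalent to the two decoupled problems, rather than merely implied by them. The forward direction (a solution of \eqref{VarForContH2H1} yields solutions of both subproblems) is the test-function specialization described above; the reverse direction (solutions of the two subproblems recombine into a solution of \eqref{VarForContH2H1}) requires summing the two identities, which is immediate once one notes that $V$ and $\Psi$ appear in separate slots. Everything else is a routine bookkeeping of the boundedness and coercivity constants already catalogued in Lemma~\ref{lemboundcont}, so no substantive obstacle remains beyond this structural verification.
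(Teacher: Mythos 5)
Your proof is correct and follows essentially the same route as the paper: decomposing \eqref{VarForContH2H1} into Problems~\ref{ContProbH1} and~\ref{ContProbH2} and applying the Lax--Milgram theorem twice, using precisely the estimates \eqref{Bound_anCont}, \eqref{BoundFCont}, \eqref{ellip_anCont} for $\sigma$ and \eqref{Bound_aDCont}, \eqref{Bound_a0Cont}, \eqref{ellip_aDCont} for $u$. Your explicit verification of the equivalence between the coupled formulation and the two decoupled problems (via the test-function choices $\psi=0$ and $v=0$) is a detail the paper leaves implicit, and it is a welcome addition rather than a deviation.
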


\subsection{Regularity result}
The following result establishes an additional regularity for $u$.

\begin{lemma}\label{Lema_RegAd}
For all $f\in \L^2(\Omega)$ the unique solution $(\sigma, u)\in V\times \Psi$ of weak formulation~\eqref{VarForContH2H1} satisfies
\begin{equation}
\Vert \sigma\Vert_{1,\Omega}
+
\Vert u\Vert_{5,\Omega}
\leq C \Vert f \Vert_{0,\Omega}.\label{AddRegu_u}
\end{equation}
\end{lemma}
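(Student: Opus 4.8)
The plan is to treat the two unknowns separately: I bound $\sigma$ directly from its variational problem, and I deduce the regularity of $u$ from the a priori estimate \eqref{RegOfu} already recorded for the sixth-order problem.

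First I would estimate $\sigma$. Choosing $v=\sigma$ in Problem~\ref{ContProbH1} and using the ellipticity \eqref{ellip_anCont} together with the $L^2$-representation of $F$ valid when $f\in L^2(\Omega)$, one gets
\begin{equation*}
C_{P_1}\,\|\sigma\|_{1,\Omega}^2\le a^{\nabla}(\sigma,\sigma)=F(\sigma)=-\int_\Omega f\,\sigma\le \|f\|_{0,\Omega}\,\|\sigma\|_{0,\Omega}\le \|f\|_{0,\Omega}\,\|\sigma\|_{1,\Omega},
\end{equation*}
so that $\|\sigma\|_{1,\Omega}\le C_{P_1}^{-1}\|f\|_{0,\Omega}$, which controls the first term of \eqref{AddRegu_u}.

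For the second term I would identify the pair $(\sigma,u)$ with the solution of the original problem \eqref{SixthEq}. Indeed, $\sigma$ solves $\Delta\sigma=f$ with $\sigma|_\Gamma=0$ in the weak sense of Problem~\ref{ContProbH1}, while $u$ solves $\Delta^2u=-\sigma$ subject to $u=\Delta u=0$ on $\Gamma$ in the weak sense of Problem~\ref{ContProbH2}; eliminating $\sigma$ shows that $u$ is the weak solution of $-\Delta^3 u=-\Delta\sigma=f$ with the simply supported conditions \eqref{SixthEq_b}--\eqref{SixthEq_c}, the condition $\Delta^2u=0$ on $\Gamma$ being inherited from $\sigma|_\Gamma=0$. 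By uniqueness $u$ coincides with the solution of \eqref{SixthEq}, so \eqref{RegOfu} applies and yields $\|u\|_{5,\Omega}\lesssim\|f\|_{-1,\Omega}$. Combining this with the continuous embedding $L^2(\Omega)\hookrightarrow H^{-1}(\Omega)$, i.e.\ $\|f\|_{-1,\Omega}\le C\,\|f\|_{0,\Omega}$, gives $\|u\|_{5,\Omega}\le C\,\|f\|_{0,\Omega}$; adding the two bounds produces \eqref{AddRegu_u}.

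The main obstacle is this identification step, together with the correct choice of the norm on $f$. One should resist the temptation to obtain $u\in H^5$ by bootstrapping elliptic regularity through the two decoupled second-order problems, since on a convex polyhedral domain the gain $H^s\to H^{s+2}$ for the Dirichlet Laplacian is not guaranteed for every $s$ (convexity only supplies $H^2$ regularity for $L^2$ data). The argument must instead lean on the global $H^5$ estimate \eqref{RegOfu} for the sixth-order operator, so the delicate point is to verify rigorously that the decoupled weak formulation reproduces the equation $-\Delta^3u=f$ and all three boundary conditions in \eqref{SixthEq_b}--\eqref{SixthEq_c}, after which the passage from the $H^{-1}$-norm in \eqref{RegOfu} to the $L^2$-norm in \eqref{AddRegu_u} is immediate by the embedding.
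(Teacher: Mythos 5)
Your proposal is correct and follows essentially the same route as the paper: the paper states this lemma without a written proof, relying precisely on the quoted a priori estimate \eqref{RegOfu} for the sixth-order problem together with the elementary stability bound for $\sigma$. You simply make the implicit ingredients explicit --- the coercivity \eqref{ellip_anCont} tested with $v=\sigma$, the identification (by uniqueness) of $u$ with the solution of \eqref{SixthEq}, and the embedding $L^2(\Omega)\hookrightarrow H^{-1}(\Omega)$ --- so nothing further is needed.
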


\begin{remark}\label{remark_regul_ad}
We point out that the solution  $\sigma \in V$ does not have additional regularity. This can be represent a problem to obtain  interpolation results for $\sigma$,  however, we will apply the standard duality arguments to obtain overcome this fact.
\end{remark}

\section{Virtual element spaces on polyhedral meshes}\label{VEMspaces}

In this section, we introduce the virtual element spaces on polyhedral meshes to propose a discrete variational formulation associated to the Problems~\ref{ContProbH1} and~\ref{ContProbH2}, and consequently for variational formulation \eqref{VarForContH2H1}. 
Since the weak formulation is defined on $V\times \Psi$ we will consider the $C^0$ and $C^1$ virtual element spaces introduced on \cite{HO_C0VEM_Polyhedral} and \cite{C1VEM_Polyhedral}, respectively.

\paragraph{Mesh assumptions} Let $\Omega_h$ be a discretization of $\Omega$ composed by generic polyhedrons $P$. The convergence theory behind the usage of VEM spaces requires the following mesh assumptions:
\begin{itemize}
\item[{\bf A}$_1$:] each element $P$ is star shaped	with respect to a ball $B_P$ 
whose radius is uniformly comparable with the polyhedron diameter, $h_P$;
\item[{\bf A}$_2$:] each face $f$ is star shaped with respect to a disc $B_f$ 
whose radius is uniformly comparable with the face diameter, $h_f$;
\item[{\bf A}$_3$:] given a polyhedron $P$ all edge lengths and face diameters are uniformly comparable with respect to its diameter $h_P$.
\end{itemize}

We recall the virtual element spaces in three dimension. 
With this end, we will proceed as a standard virtual element framework in three dimension.
For that, we define the virtual spaces and projections over polyhedron's faces.
Then, we move to the bulk spaces and finally we glue all these spaces together to have the global one.
Now, because of details of these constructions are already described on \cite{HO_C0VEM_Polyhedral} and \cite{C1VEM_Polyhedral}, we we will only present a brief description about that.

\subsection{$C^0-$VEM spaces on polyhedral meshes}

\paragraph{$C^0-$Face space}
Given a face $f$ of a generic polyhedron of the mesh $\Omega_h$, we consider the so-called enhanced space on face
\begin{align*}
	V_h(f):=\Big\{
	 v_h   \in H^1(f)\cap C^{0}(f)&: \Delta_{\nu}  v_h    \in \mathbb{P}_1(f), \phantom{m}  v_h   |_{\partial{f}}\in C^0(\partial f),\  v_h   |_{e}\in \mathbb{P}_1(e) \ \forall e\in \partial f\\[0.2em]
	&\phantom{m} \int_f \Upsilon_{f}^{\nabla} v_h   \df=\int_f  v_h   \df \quad \forall q\in \mathbb{P}_{1}(f)\Big\}\,,
\end{align*}
where $\Delta_{\nu}$ is the Laplace operators in the local face coordinates and $\Upsilon_{f}^{\nabla}$ is the projection operator from the virtual spaces to the space of polynomials based on the $H^1$ scalar product and it is defined as $\Upsilon_{f}^{\nabla}:V_h(f)\to\mathbb{P}_{1}(f)$ such that
\begin{equation}
\left\{\begin{array}{rl}
    \mathlarger{\int}_f \nabla (\Upsilon_{f}^{\nabla}  v_h    - v_h   )\cdot \nabla p_1\df&=0\qquad \forall p_1\in \mathbb{P}_1(f),\nonumber\\[0.5em]
    \mathlarger{\int}_{\partial f} (\Upsilon_{f}^{\nabla}  v_h    - v_h   )\de&=0 \nonumber.
        \end{array}\right.
\end{equation}
The degrees of freedom of such spaces are 
the evaluation of the virtual function on all vertexes.
Moreover, such degrees of freedom are sufficient to compute the projection operator $\Upsilon_{f}^{\nabla}$.
We refer the reader to~\cite{HO_C0VEM_Polyhedral} for further details.


\paragraph{$C^0-$Polyhedron space}

\begin{align*}
	V_h(P):=\Big\{
	 v_h   \in H^1(P) &: \Delta   v_h    \in \mathbb{P}_1(P), \phantom{m}  v_h   |_{\partial{P}}\in C^0(\partial P),\  v_h   |_{f}\in \mathbb{P}_1(f) \ \forall f\in \partial P\\[0.2em]
	&\phantom{m} \int_P \Upsilon_{P}^{\nabla} v_h   \dP=\int_P  v_h   \dP \quad \forall q\in \mathbb{P}_{1}(P)\Big\}\,.
\end{align*}

A virtual function in $ v_h    \in V_h(\E)$ and the projection operator $\Upsilon_{\E}^{\Delta}$ 
are uniquely determined by the following degrees of freedom:
\begin{itemize}
	\item[${\bf D_0}$:] evaluation of $ v_h   $ at the vertexes of $\E$
\end{itemize} 

The projection operator $\Upsilon_{\E}^{\nabla}$ is essential to define the VEM discrete counterpart of the bi-linear operator $a^\nabla(\cdot,\cdot)$,
see Equation~\eqref{defan}.
However, to set Problem~\ref{ContProbH2}, 
we have to define the projection operator  $\Upsilon_\E^0:V_h(\E)\to\mathbb{P}_1(E)$ for $a^0(\cdot,\cdot)$ as
\begin{equation}
\int_\E \Upsilon_{\E}^0  v_h   \,p_1\dP=\int_\E   v_h   \,p_1\dP\quad \forall p_1\in \mathbb{P}_1(\E)\,.
\nonumber 
\end{equation}

\begin{remark}
From the definition of $V_h(P)$ we have that the for all $ v_h   \in V_h(P)$ the following identity  $\Upsilon_{\E}^\nabla  v_h   =\Upsilon_{\E}^0  v_h   $ holds true (see \cite{HO_C0VEM_Polyhedral} for further details).
\end{remark}

\paragraph{$C^0-$Global space}
The global space is obtained by gluing the local spaces $V_h(\E)$ together, i.e.,
we define
\begin{equation}
V_h:=\Big\{ v_h   \in H^1(\Omega):  v_h   |_{\E}\in V_h({\E})\quad \forall P\in \Omega_h\Big\}\,.
\label{eqn:fullC0Space}
\end{equation}

\subsection{$C^1-$VEM spaces on polyhedral meshes}

\paragraph{$C^1-$Face spaces}
Given a face $f$ of a generic polyhedron of the mesh $\Omega_h$,
we consider the so-called enhanced spaces on faces 
\begin{align*}
	\Psi_h^{\nabla}(f):=\Big\{
	\psi_h\in H^1(f)&: \Delta_{\nu} \psi_h \in \mathbb{P}_0(f), \phantom{m} \psi_h|_{\partial{f}}\in C^0(\partial f),\ \psi_h|_{e}\in \mathbb{P}_1(e) \ \forall e\in \partial f\\[0.2em]
	&\phantom{m} \int_f \Pi_{f}^{\nabla}\psi_h\df=\int_f \psi_h\df \Big\}\,,
\end{align*}
and
\begin{align*}
	\Psi_h^{\Delta}(f):=\Big\{
	\psi_h\in H^2(f) &: \Delta^2_{\tau}\psi_h\in\mathbb{P}_{1}(f), \phantom{m} \psi_h|_{\partial f}\in C^0(\partial f),\ \psi_h|_e\in\mathbb{P}_3(e)\,\,\forall e\in\partial f,\\[0.2em]
	&\phantom{m} \nabla_{\nu} \psi_h|_{\partial f}\in [C^0(\partial f)]^2,\ \partial_{{\boldsymbol{n}}_f^e} \psi_h|_e\in\mathbb{P}_1(e)\,\,\forall e\in\partial f, \\[0.2em]
	&\phantom{m}\int_f \Pi_{f}^{\Delta}\psi_h\,p_1\df=\int_f \psi_h\,p_1 \df, \quad \forall p_1\in \mathbb{P}_1(f) \Big\}\,,
\end{align*}
where $\Delta^2_{\nu}$ is the bi-harmonic operator in the local face coordinates and $\Pi_{f}^{\nabla}$ is the projection operator from the virtual spaces to the space of polynomials based on the $H^2$ scalar product and it is defined as follows:
$\Pi_{f}^{\Delta}:\Psi_h^{\Delta}(f)\to\mathbb{P}_{2}(f)$ such that
\begin{equation}
\left\{\begin{array}{rl}
    \mathlarger{\int}_f \nabla^2 (\Pi_{f}^{\Delta} \psi_h -\psi_h)\cdot \nabla^2 p_2\df&=0\qquad \forall p_2\in \mathbb{P}_2(f),\nonumber\\[0.5em]
    \mathlarger{\int}_{\partial f} (\Pi_{f}^{\Delta} \psi_h -\psi_h)p_1\  \de&=0\qquad \forall p_1\in \mathbb{P}_1(f)\nonumber.
        \end{array}\right.\,
\end{equation}

The degrees of freedom of such spaces are 
the evaluation of the virtual function and its gradient on all vertexes. 
Moreover, such degrees of freedom are sufficient to compute both projection operators $\Pi_{f}^{\nabla}$ and $\Pi_{f}^{\Delta}$.
We refer the reader to~\cite{BDR2019C1Polyhedral} for a more rigorous proof about this fact.

In the next paragraph we will see that both $\Psi_h^{\nabla}(f)$ and $\Psi_h^{\Delta}(f)$ play an important role in the definition of the virtual element space inside the polyhedron $P$. In particular, the key ingredients is the \emph{enhancing} properties:
$$
\int_f \Pi_{f}^{\nabla}\psi_h\df=\int_f \psi_h\df\qquad\text{and}\qquad
\int_f \Pi_{f}^{\Delta}\psi_h\,p_1\df=\int_f \psi_h\,p_1 \df \quad \forall p_1\in \mathbb{P}_1(f)\,.
$$
Indeed such relations are crucial to define all bulk projection operators inside $P$ and, consequently the discrete counterpart of the operators in Equation~\eqref{defaD},~\eqref{defan} and~\eqref{defa0}~\cite{BDR2019C1Polyhedral}.

\paragraph{$C^1-$Polyhedron space}
The virtual element space inside the polyhedron is defined as 
\begin{align*}
\Psi_h(\E):=\Bigg\{\psi_h\in H^2(\E) &: \Delta^2\psi_h\in\mathbb{P}_{2}(\E),\phantom{m} \psi_h|_{S_\E}\in C^0(S_\E), \nabla \psi_h|_{S_\E}\in [C^0(S_\E)]^3,\\[0.5em]
                &\phantom{m} \psi_h|_f\in \Psi_h^{\Delta}(f),\quad\partial_{{\boldsymbol{n}}_P^f} \psi_h|_f\in  \Psi_h^{\nabla}(f),\  \forall f\in\partial\E\\[-0.25em]
                &\phantom{m}\int_\E \Pi_{\E}^{\Delta}\psi_h\,p_2\dP=\int_\E  \psi_h\,p_2\dP\quad \forall p_2\in \mathbb{P}_2(\E)\Bigg\},
\end{align*}
where we exploit the face spaces introduced in the previous section 
to define the virtual function over the face, $\psi_h|_f\in \Psi_h^{\Delta}(f)$,
and the normal component of its gradient, $\partial_{{\boldsymbol{n}}_P^f} \psi_h|_f\in \Psi_h^{\nabla}(f)$.
To define such space, we add the so-called enhancing property,
\begin{equation}
\int_\E \Pi_{\E}^{\Delta}\psi_h\,p_2\dP=\int_\E  \psi_h\,p_2\dP\quad \forall p_2\in \mathbb{P}_2(\E)\,,
\label{eqn:henP}
\end{equation}
that is based on the $H^2$ projection operator 
$\Pi_{\E}^{\Delta}:\Psi_h(\E)\to\mathbb{P}_2(\E)$ 
defined as 
\begin{equation}
\left\{\begin{array}{rl}
\mathlarger{\int}_{\E} \nabla ^2(\Pi_{\E}^{\Delta}\psi_h-\psi_h):\nabla ^2q\dP=0&\quad \forall q\in \mathbb{P}_2(\E)\\[1ex]
\mathlarger{\int}_{\partial \E}(\Pi_{\E}^{\Delta}\psi_h-\psi_h)\,q\df =0&\quad \forall q \in  \mathbb{P}_1(\E)
\end{array}\right.\,.
\label{eqn:deltaProjDef}
\end{equation}
A virtual function in $\Psi_h(\E)$ and the projection operator $\Pi_{\E}^{\Delta}$ 
are uniquely determined by the following degrees of freedom:
\begin{itemize}
	\item[${\bf D_1}$:] evaluation of $\psi_h$ at the vertexes of $\E$,
	\item[${\bf D_2}$:] evaluation of $\nabla \psi_h$ at the vertexes of $\E$.
\end{itemize} 
The projection operator $\Pi_{\E}^{\Delta}$ is essential
to define the VEM discrete counterpart of the bi-linear operator $a^\Delta(\cdot,\cdot)$,
see Equation~\eqref{defaD}.
However, to set Problem~\ref{ContProbH2}, 
we have to define the projection operator  $\Pi_\E^0:\Psi_h(\E)\to\mathbb{P}_2(E)$ for $a^0(\cdot,\cdot)$ as
\begin{equation}
\int_\E \Pi_{\E}^0 \psi_h\,p_2\dP=\int_\E  \psi_h\,p_2\dP\quad \forall p_2\in \mathbb{P}_2(\E)\,.
\label{eqn:L2PDef}
\end{equation}

\begin{remark}
Comparing Equation~\eqref{eqn:L2PDef} and the enhancing property, Equation~\eqref{eqn:henP},
we deduce that in $\Psi_h(\E)$ these two projections coincide, i.e., $\Pi_{\E}^\Delta \psi_h=\Pi_{\E}^0 \psi_h$~\cite{BDR2019C1Polyhedral}.
\label{rem:coinPiDelatPiZero}
\end{remark}

\paragraph{$C^1-$Global space}
Starting from such local spaces, we are ready to define the global one.
Given a decomposition of the computational domain $\Omega$ into polyhedrons, $\Omega_h$,
The global space is obtained by gluing the local spaces $\Psi_h(\E)$ togheter, i.e.,
we define
\begin{equation}
\Psi_h:=\Big\{\psi_h\in H^2(\Omega): \psi_h|_{\E}\in \Psi_h({\E})\Big\}\,.
\label{eqn:fullC1Space}
\end{equation}

\section{Discrete problem}
\label{SEC:DisSpecProb}
In this section we will introduce a virtual element discretization for solving  the variational formulation~\ref{VarForContH2H1}. With this end, we first introduce the following preliminary definitions. 
Given a polyhedral mesh $\Omega_h$, we split the bilinear forms defined on \eqref{defaD}-\eqref{defa0} over each polyhedron $P$. 
\begin{align*}
	a^{\Delta}(u,\psi)=\sum_{\E\in\Omega_h}a_{\E}^{\Delta}(u,\psi)\,,\quad 
	a^{\nabla}(\sigma, v)=\sum_{\E\in\Omega_h}a_{\E}^{\nabla}(\sigma, v)\,\quad\text{and}\quad a^{0}(\sigma,\psi)=\sum_{\E\in\Omega_h}a_{\E}^{0}(\sigma,\psi)\,,
\end{align*}
where 
\begin{equation*}
a_{\E}^{\Delta}(u,\psi):=\int_{P} \nabla^2u :\nabla^2\psi\dP\,,\quad
a_{\E}^{\nabla}(\sigma, v):=\int_{P} \nabla \sigma \cdot \nabla  v\dP\quad\text{and}\quad
a_{\E}^{0}(\sigma,\psi):=\int_{P} \sigma\,\psi\dP\,.
\end{equation*}
To discretize the problems at hand, 
we construct the following bi-linear forms 
\begin{align*}
a_{\E,h}^{\Delta}:\Psi_h(P)\times \Psi_h(P)\to \mathbb{R}\,,\qquad a_{\E,h}^{\Delta}(u_h,\psi_h) 
&:=a_{\E}^{\Delta}(\Pi_{\E}^{\Delta} u_h,\Pi_{\E}^{\Delta}\psi_h) + s_{\E}^{\Delta}(u_h-\Pi_{\E}^{\Delta}u_h,\psi_h-\Pi_{\E}^{\Delta}\psi_h)\,,\\
a_{\E,h}^{\nabla}:V_h(P)\times V_h(P)\to \mathbb{R}\,, \qquad a_{\E,h}^{\nabla}(\sigma_h, v_h) 
&:= a_{\E}^{\nabla}(\Upsilon_{\E}^{\nabla} \sigma_h,\Upsilon_{\E}^{\nabla} v_h) + s_{\E}^{\nabla}(\sigma_h-\Upsilon_{\E}^{\nabla}\sigma_h, v_h-\Upsilon_{\E}^{\nabla} v_h)\,,\\
a_{\E,h}^{0}:V_h(P)\times \Psi_h(P)\to \mathbb{R}\,,\qquad a_{\E,h}^{0}(\sigma_h,\psi_h) &:= a_{\E}^{0}(\Upsilon_{\E}^{0} \sigma_h,\Pi_{\E}^{0}\psi_h)\,.
\end{align*}
Notice that such bi-linear forms are composed by two parts.
The first one where we have the corresponding continuous operator applied to a proper projection of the virtual element function.
The second one is a stabilisation term, i.e., 
$s^{\Delta,\E}(\cdot,\cdot )$ and  $s^{\nabla,\E}(\cdot,\cdot )$ 
are positive definite bi-linear form defined on $\Psi_h(P)\times \Psi_h(P)$ and $V_h(P)\times V_h(P)$, respectively, such that:
\begin{equation*}
\begin{array}{lcrcrl}
\alpha_{*}a_{\E}^{\Delta}(\psi_h,\psi_h)&\leq& s_{\E}^{\Delta}(\psi_h,\psi_h)&\leq & \alpha^{*}a_{\E}^{\Delta}(\psi_h,\psi_h) 
&\forall \psi_h\in \Psi_h(\E)\cap\ker(\Pi_{\E}^{\Delta})\,,\\[0.2em]
\beta_{*}a_{\E}^{\nabla}( v_h, v_h)&\leq& s_{\E}^{\nabla}( v_h, v_h)&\leq& \beta^{*}a_{\E}^{\nabla}( v_h, v_h)
&\forall  v_h\in V_h(\E)\cap\ker(\Upsilon_{\E}^{\nabla})\,
\end{array}    
\end{equation*}
where, $\alpha_{*},\alpha^{*},\beta_{*}$ and $\beta^{*}$ are positive constants independent of $h_P$.

We also define the discrete linear functional $F_h:V_h(P)\to \mathbb{R}$ given by 
\begin{align*}
F_{P,h}( v_h):=-\int_P \Upsilon_P^0 f  v_h.
\end{align*}

Then, the discrete global bi-linear forms and discrete global functional are given by the sum of each local contribution, i.e.,
\begin{align*}
&a_{h}^{\Delta}:\Psi_h(\E)\times \Psi_h(\E) \to \mathbb{R}; \qquad
a_{h}^{\Delta}(u_h,\psi_h):=\sum_{\E\in\Omega_h}a_{P,h}^{\Delta}(u_h,\psi_h)\,,\\
&a_{h}^{\nabla}:V_h(\E)\times V_h(\E) \to \mathbb{R}; \qquad
a_{h}^{\nabla}(\sigma_h, v_h):=\sum_{\E\in\Omega_h}a_{\E,h}^{\nabla}(\sigma_h, v_h)\,,\\
&a_{h}^{0}:V_h(\E)\times \Psi_h(\E) \to \mathbb{R}; \qquad
a_{h}^{0}(\sigma_h,\psi_h):=\sum_{\E\in\Omega_h}a_{\E,h}^{0}(\sigma_h,\psi_h)\,\\
&F_{h}:V_h(\E)\to  \mathbb{R}; \qquad \qquad \qquad
F_{h}( v_h):=\sum_{\E\in\Omega_h}F_{\E,h}( v_h)\,.
\end{align*}

In the next two proposition we collect two important results of VEM 
that guaranteed the well-posedness of the virtual element formulation of Problem~\ref{VarForContH2H1}.

\begin{proposition}[Consistency]\label{prop:consistency}
The following relations holds 
\begin{align*}
& a_{\E,h}^{\Delta}(q,\psi_h)=a_{\E}^{\Delta}(q,\psi_h)\qquad \forall q\in \mathbb{P}_{2}(\E) \quad \forall \psi_h\in \Psi_h(\E)\,,\\
& a_{\E,h}^{\nabla}(q, v_h)=a_{\E}^{\nabla}(q, v_h)\qquad \forall q\in \mathbb{P}_{1}(\E) \quad \forall  v_h\in V_h(\E)\,,\\
& a_{\E,h}^{0}(q,\psi_h)=a_{\E}^{0}(q,\psi_h)\qquad \forall q\in \mathbb{P}_{1}(\E) \quad \forall \psi_h\in \Psi_h(\E)\,,\\
&F_{P,h}( v_h)=F_P(\Upsilon_P^0 v_h)  \qquad \qquad \qquad \qquad  \forall  v_h\in V_h(\E)\,.
\end{align*}
\end{proposition}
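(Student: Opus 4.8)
The plan is to prove each of the four consistency relations by directly exploiting the definitions of the discrete bilinear forms together with the \emph{polynomial reproduction} (projection) property, namely that each projection operator acts as the identity on its target polynomial space.

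Consider the first identity. By the definition of $a_{\E,h}^{\Delta}$ we have
\begin{equation*}
a_{\E,h}^{\Delta}(q,\psi_h)=a_{\E}^{\Delta}(\Pi_{\E}^{\Delta}q,\Pi_{\E}^{\Delta}\psi_h)+s_{\E}^{\Delta}(q-\Pi_{\E}^{\Delta}q,\psi_h-\Pi_{\E}^{\Delta}\psi_h).
\end{equation*}
The key observation is that for $q\in\mathbb{P}_2(\E)$ the projection $\Pi_{\E}^{\Delta}$ reproduces $q$, i.e.\ $\Pi_{\E}^{\Delta}q=q$. This follows from the defining relations \eqref{eqn:deltaProjDef}: taking the test polynomial equal to $q$ shows that $q$ itself satisfies the variational characterization, and uniqueness forces $\Pi_{\E}^{\Delta}q=q$. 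Consequently the stabilization argument $q-\Pi_{\E}^{\Delta}q$ vanishes, so the stabilization term drops entirely, and the first term becomes $a_{\E}^{\Delta}(q,\Pi_{\E}^{\Delta}\psi_h)$. It then remains to remove the remaining projection on the second slot; here I would use that $a_{\E}^{\Delta}(q,\cdot)$ involves only $\nabla^2 q$, which is a constant (degree-zero) tensor since $q\in\mathbb{P}_2$, together with the orthogonality built into the first line of \eqref{eqn:deltaProjDef} to conclude $a_{\E}^{\Delta}(q,\Pi_{\E}^{\Delta}\psi_h)=a_{\E}^{\Delta}(q,\psi_h)$. This yields the claim.

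The second and third identities follow the same template. For $a_{\E,h}^{\nabla}$ one uses that $\Upsilon_{\E}^{\nabla}$ reproduces $\mathbb{P}_1(\E)$, so for $q\in\mathbb{P}_1(\E)$ the stabilization term vanishes and the polynomial orthogonality defining $\Upsilon_{\E}^{\nabla}$ removes the projection in the second argument. For $a_{\E,h}^{0}$, which carries no stabilization, one uses directly that $\Upsilon_{\E}^{0}q=q$ for $q\in\mathbb{P}_1(\E)$ and that the defining relation of $\Pi_{\E}^{0}$ (equivalently $\Pi_{\E}^{\Delta}$, by Remark~\ref{rem:coinPiDelatPiZero}) tests against all of $\mathbb{P}_2(\E)\supset\mathbb{P}_1(\E)$, so that $\int_\E q\,\Pi_{\E}^{0}\psi_h=\int_\E q\,\psi_h$ for such $q$. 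For the functional identity, the definition $F_{P,h}(v_h)=-\int_P\Upsilon_P^0 f\,v_h$ is rewritten using the self-adjointness of the $L^2$ projection against constants: since $\Upsilon_P^0$ projects onto $\mathbb{P}_1(P)$, one has $\int_P\Upsilon_P^0 f\,v_h=\int_P f\,\Upsilon_P^0 v_h$, which is exactly $-F_P(\Upsilon_P^0 v_h)$ up to sign bookkeeping matching \eqref{defF}.

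The main obstacle is not the polynomial-reproduction step, which is routine, but the removal of the projection on the \emph{test} argument in the first three identities: one must verify that $a_{\E}^{\Delta}(q,\Pi_{\E}^{\Delta}\psi_h)=a_{\E}^{\Delta}(q,\psi_h)$ and its analogues hold exactly. This requires checking that the orthogonality encoded in the projection definitions is against a polynomial space large enough to contain $\nabla^2 q$ (respectively $\nabla q$), which is where the degrees chosen in \eqref{eqn:deltaProjDef} and the $C^0$ analogue become essential; the $H^2$ case is the delicate one because $\Pi_{\E}^{\Delta}$ is defined only up to the low-order kernel, so I would be careful to invoke the second (boundary) condition in \eqref{eqn:deltaProjDef} to pin down the remaining affine part and guarantee the identity holds without a residual.
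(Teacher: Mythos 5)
Your proof is correct: polynomial reproduction of the projectors ($\Pi_{\E}^{\Delta}q=q$ on $\mathbb{P}_2$, $\Upsilon_{\E}^{\nabla}q=\Upsilon_{\E}^{0}q=q$ on $\mathbb{P}_1$) kills the stabilization terms, the defining orthogonalities remove the projection from the test slot, and self-adjointness of $\Upsilon_P^0$ handles the load term — and your caution about the boundary condition in \eqref{eqn:deltaProjDef} is well placed, since it is exactly what pins down the affine part so that $\Pi_{\E}^{\Delta}q=q$ holds identically and the stabilization argument vanishes. This is precisely the standard argument the paper invokes by citation (it gives no details, deferring to the VEM literature), so your proposal matches the paper's intended proof while actually spelling it out.
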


\begin{proof}
The proof follows standard arguments in the VEM literature and it is already presented in the literature,
see, e.g.,~\cite{BDR2019C1Polyhedral}.
\end{proof}

\begin{proposition}[Stability]\label{prop:stability}
The following relations hold
\begin{equation*}
\begin{array}{lcrcrl}
\min\{1, \alpha_{*}\}a_{\E}^{\Delta}(\psi_h,\psi_h)&\leq& a_{\E,h}^{\Delta}(\psi_h,\psi_h)&\leq& \max\{1,\alpha^{*}\}a_{\E}^{\Delta}(\psi_h,\psi_h) 
&\forall \psi_h\in \Psi_h(\E)\cap\ker(\Pi_{\E}^{\Delta})\,,\\[0.2em]
\min\{1, \beta_{*}\}a_{\E}^{\nabla}( v_h, v_h)&\leq& a_{\E,h}^{\nabla}( v_h, v_h)&\leq& \max\{1,\beta^{*}\}a_{\E}^{\nabla}( v_h, v_h)
&\forall  v_h\in V_h(\E)\cap\ker(\Upsilon_{\E}^{\nabla})\,.
\end{array}    
\end{equation*}
\end{proposition}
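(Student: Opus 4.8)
The two chains of inequalities are proved by one and the same argument, so the plan is to carry out the details for the $a_{\E}^{\Delta}$ estimate and then indicate the obvious substitutions for the $a_{\E}^{\nabla}$ one (replacing $\Pi_{\E}^{\Delta}$, $s_{\E}^{\Delta}$ and $\mathbb{P}_2(\E)$ by $\Upsilon_{\E}^{\nabla}$, $s_{\E}^{\nabla}$ and $\mathbb{P}_1(\E)$, and using the defining relation of the $H^1$-projection in place of \eqref{eqn:deltaProjDef}). The mechanism is the standard VEM splitting of a virtual function into its polynomial projection plus a remainder lying in the kernel of that projection, combined with the spectral equivalence assumed on the stabilisation term.

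First I would record the $a_{\E}^{\Delta}$-orthogonality of the projection. Choosing $q=\Pi_{\E}^{\Delta}\psi_h\in\mathbb{P}_2(\E)$ in the first line of \eqref{eqn:deltaProjDef} and invoking the symmetry of $a_{\E}^{\Delta}$ gives $a_{\E}^{\Delta}(\Pi_{\E}^{\Delta}\psi_h,\psi_h-\Pi_{\E}^{\Delta}\psi_h)=0$, and hence the Pythagorean identity
\begin{equation*}
a_{\E}^{\Delta}(\psi_h,\psi_h)=a_{\E}^{\Delta}(\Pi_{\E}^{\Delta}\psi_h,\Pi_{\E}^{\Delta}\psi_h)+a_{\E}^{\Delta}(\psi_h-\Pi_{\E}^{\Delta}\psi_h,\psi_h-\Pi_{\E}^{\Delta}\psi_h).
\end{equation*}
Since $\Pi_{\E}^{\Delta}$ fixes $\mathbb{P}_2(\E)$, the remainder $\psi_h-\Pi_{\E}^{\Delta}\psi_h$ lies in $\ker(\Pi_{\E}^{\Delta})$, so the assumed bounds $\alpha_{*}a_{\E}^{\Delta}\le s_{\E}^{\Delta}\le\alpha^{*}a_{\E}^{\Delta}$ apply to it.

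With these two ingredients the estimates are immediate. Starting from the definition $a_{\E,h}^{\Delta}(\psi_h,\psi_h)=a_{\E}^{\Delta}(\Pi_{\E}^{\Delta}\psi_h,\Pi_{\E}^{\Delta}\psi_h)+s_{\E}^{\Delta}(\psi_h-\Pi_{\E}^{\Delta}\psi_h,\psi_h-\Pi_{\E}^{\Delta}\psi_h)$, I would bound the stabilisation term from below by $\alpha_{*}$ times the $a_{\E}^{\Delta}$-seminorm of the remainder, factor out $\min\{1,\alpha_{*}\}$, and recombine through the Pythagorean identity to obtain the lower bound; the symmetric manipulation with $\alpha^{*}$ and $\max\{1,\alpha^{*}\}$ gives the upper bound. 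On the subspace $\ker(\Pi_{\E}^{\Delta})$ appearing in the statement one has $\Pi_{\E}^{\Delta}\psi_h=0$, so the polynomial term vanishes and the argument even collapses to the sharper constants $\alpha_{*}$, $\alpha^{*}$; the displayed $\min/\max$ constants are precisely what survives when the same reasoning is run on all of $\Psi_h(\E)$.

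The only point that needs genuine care is the orthogonality step, namely verifying that \eqref{eqn:deltaProjDef} really makes $\Pi_{\E}^{\Delta}$ the $a_{\E}^{\Delta}$-orthogonal projection onto $\mathbb{P}_2(\E)$. This is mildly delicate because $a_{\E}^{\Delta}(\cdot,\cdot)$ is only a seminorm, its kernel containing $\mathbb{P}_1(\E)$. I expect this to cause no real difficulty, however, since the computation uses solely bilinearity, symmetry and the orthogonality relation, and never the definiteness of $a_{\E}^{\Delta}$; the second line of \eqref{eqn:deltaProjDef} merely fixes the $\mathbb{P}_1$-part of $\Pi_{\E}^{\Delta}\psi_h$ and is irrelevant to the seminorm estimates. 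Everything else is routine bookkeeping with the four stabilisation constants.
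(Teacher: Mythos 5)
Your proof is correct and is precisely the standard Pythagorean-decomposition argument (orthogonality of $\Pi_{\E}^{\Delta}$ from the first line of \eqref{eqn:deltaProjDef}, splitting $\psi_h$ into projection plus kernel remainder, then applying the spectral bounds on $s_{\E}^{\Delta}$) that the paper itself does not write out but delegates to the literature via its one-line citation. Your side remark is also apt: on $\ker(\Pi_{\E}^{\Delta})$ as literally stated the argument yields the sharper constants $\alpha_{*},\alpha^{*}$, while the displayed $\min/\max$ constants are what the same computation gives on all of $\Psi_h(\E)$ (respectively $V_h(\E)$), which is the version actually needed later for the coercivity bounds \eqref{ellip_aD_h}--\eqref{ellip_an_h}.
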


\begin{proof}
The proof follows standard arguments in the VEM literature and it is already presented in the literature,
see, e.g.,~\cite{BDR2019C1Polyhedral}.
\end{proof}

Starting from Proposition~\ref{prop:consistency} and~\ref{prop:stability} we obtain the following lemma that is the discrete version of Lemma~\ref{lemboundcont}.
\begin{lemma}
The following estimates hold
\begin{subequations}
\begin{align}
|a_{h}^{\Delta}(u_{h},\psi_{h})|
& \lesssim  \max\{1,\alpha^{*}\} ||u_{h}||_{2,\Omega}||\psi_{h}||_{2,\Omega}
&\forall u_{h},\psi_{h}&\in \Psi_{h}\,,\label{Bound_aD_h}\\
|a^{\nabla}(u_{h},\psi_{h})|
& \lesssim\max\{1,\beta^{*}\}||u_{h}||_{1,\Omega}||\psi_{h}||_{1,\Omega}
&\forall u_{h},\psi_{h}&\in \Psi_{h}\,,\label{Bound_an_h}\\
|a_{h}^{0}(u_{h},\psi_{h})|
& \lesssim ||u_{h}||_{0,\Omega}||\psi_{h}||_{0,\Omega}
&\forall u_{h},\psi_{h}&\in \Psi_{h}\,,\label{Bound_a0_h}\\
|F_h( v_h)|
&\lesssim ||f ||_{-1,\Omega} || v_h ||_{1,\Omega}
&\forall  v_{h}&\in V_{h}\,,\label{BoundF_h}\\
a_{h}^{\Delta}(\psi_{h},\psi_{h})
& \geq  C_{P_2}\min\{1,\alpha_{*} \}  ||\psi_{h}||_{2,\Omega}^2
&\qquad\forall \psi_{h}&\in \Psi_{h}\,, \label{ellip_aD_h}\\
a_{h}^{\nabla}( v_{h}, v_{h})
& \geq  C_{P_1}\min\{1,\alpha_{*} \}  ||\psi_{h}||_{2,\Omega}^2
&\qquad\forall \psi_{h}&\in \Psi_{h}\,.\label{ellip_an_h}
\end{align}
\end{subequations}
\end{lemma}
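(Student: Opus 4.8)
The plan is to reduce every estimate to its element-wise counterpart, to invoke the consistency and stability properties of Propositions~\ref{prop:consistency} and~\ref{prop:stability} together with the continuous bounds of Lemma~\ref{lemboundcont}, and finally to reassemble the global inequalities by summation over $P\in\Omega_h$ followed by a discrete Cauchy--Schwarz inequality. I would organize the argument into three blocks: the continuity bounds \eqref{Bound_aD_h}--\eqref{Bound_a0_h}, the continuity of the functional \eqref{BoundF_h}, and the two coercivity estimates \eqref{ellip_aD_h}--\eqref{ellip_an_h}. A preliminary observation used throughout is that the local stability bounds of Proposition~\ref{prop:stability}, stated on $\ker(\Pi_\E^\Delta)$ and $\ker(\Upsilon_\E^\nabla)$, extend to the whole local spaces: writing $\psi_h=\Pi_\E^\Delta\psi_h+(\psi_h-\Pi_\E^\Delta\psi_h)$ and using the $a_\E^\Delta$-orthogonality of the energy projection (the first relation in \eqref{eqn:deltaProjDef}), the Pythagorean identity gives $\min\{1,\alpha_*\}\,a_\E^\Delta(\psi_h,\psi_h)\le a_{\E,h}^\Delta(\psi_h,\psi_h)\le\max\{1,\alpha^*\}\,a_\E^\Delta(\psi_h,\psi_h)$ for all $\psi_h\in\Psi_h(\E)$, and similarly for $a_{\E,h}^\nabla$.

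For the continuity of $a_h^\Delta$ and $a_h^\nabla$ I would use that each local form is symmetric and positive semidefinite, so Cauchy--Schwarz applies to it; combining this with the upper stability bound just discussed yields, for all $u_h,\psi_h\in\Psi_h(\E)$,
\begin{align*}
|a_{\E,h}^\Delta(u_h,\psi_h)|
&\le a_{\E,h}^\Delta(u_h,u_h)^{1/2}\,a_{\E,h}^\Delta(\psi_h,\psi_h)^{1/2}\\
&\le \max\{1,\alpha^*\}\,a_\E^\Delta(u_h,u_h)^{1/2}\,a_\E^\Delta(\psi_h,\psi_h)^{1/2}
= \max\{1,\alpha^*\}\,|u_h|_{2,\E}\,|\psi_h|_{2,\E}.
\end{align*}
Summing over the mesh and applying a discrete Cauchy--Schwarz inequality then gives \eqref{Bound_aD_h}; the argument for \eqref{Bound_an_h} is identical with $\nabla$ in place of $\nabla^2$ and $\beta^*$ in place of $\alpha^*$. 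For $a_h^0$ there is no stabilisation term, so I would instead invoke the $L^2$-stability of the projections $\Upsilon_\E^0$ and $\Pi_\E^0$, namely $\|\Upsilon_\E^0\sigma_h\|_{0,\E}\le\|\sigma_h\|_{0,\E}$ and $\|\Pi_\E^0\psi_h\|_{0,\E}\le\|\psi_h\|_{0,\E}$, followed by Cauchy--Schwarz and summation, to obtain \eqref{Bound_a0_h}.

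For the functional bound \eqref{BoundF_h} I would not estimate $F_{P,h}$ directly (its definition involves $\Upsilon_P^0 f$), but rather route the estimate through the consistency identity $F_{P,h}(v_h)=F_P(\Upsilon_P^0 v_h)$ of Proposition~\ref{prop:consistency}. Summing over elements gives $F_h(v_h)=F(\Upsilon^0 v_h)$, and \eqref{BoundFCont} then yields $|F_h(v_h)|\lesssim\|f\|_{-1,\Omega}\,\|\Upsilon^0 v_h\|_{1,\Omega}$; the proof closes once $\|\Upsilon^0 v_h\|_{1,\Omega}\lesssim\|v_h\|_{1,\Omega}$ is established. This $H^1$-stability is precisely where the coincidence $\Upsilon_\E^0=\Upsilon_\E^\nabla$ on $V_h(\E)$ is used, since a generic $L^2$-projection is not $H^1$-stable, whereas the $H^1$-projection $\Upsilon_\E^\nabla$ is stable in the $H^1$ seminorm. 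Finally, for the coercivity estimates I would use the lower stability bound to obtain $a_h^\Delta(\psi_h,\psi_h)=\sum_{\E}a_{\E,h}^\Delta(\psi_h,\psi_h)\ge\min\{1,\alpha_*\}\,a^\Delta(\psi_h,\psi_h)$ and then conclude with the continuous ellipticity \eqref{ellip_aDCont}; the estimate \eqref{ellip_an_h} follows in the same way from \eqref{ellip_anCont}, with $\beta_*$ replacing $\alpha_*$.

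I expect the only genuinely delicate point to be the functional bound \eqref{BoundF_h}: a naive estimate of $F_{P,h}$ would force the $L^2$-norm of $f$ rather than its $H^{-1}$-norm, so one must pass the projection onto the test function via the consistency identity and then exploit the identification $\Upsilon_\E^0=\Upsilon_\E^\nabla$ to recover $H^1$-control. All the remaining inequalities are direct consequences of the stability and consistency properties combined with Lemma~\ref{lemboundcont}, the main routine ingredient being the extension of the kernel stability bounds to the full local spaces through the energy-orthogonality of the projections.
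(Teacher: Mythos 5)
Your overall route is the paper's route: the paper's own proof is a one-line sketch (``same arguments as Lemma~\ref{lemboundcont}, exploiting Propositions~\ref{prop:consistency} and~\ref{prop:stability}''), and your fleshed-out version of it is correct for five of the six estimates. In particular, your preliminary step extending the kernel stability bounds to all of $\Psi_h(P)$ and $V_h(P)$ via the energy-orthogonality in \eqref{eqn:deltaProjDef} and the Pythagorean identity is exactly the right (and necessary) observation, and it actually repairs the literal statement of Proposition~\ref{prop:stability}, which --- as written in the paper --- asserts the two-sided bound only on $\ker(\Pi_P^{\Delta})$ and $\ker(\Upsilon_P^{\nabla})$, where it is trivial; the Cauchy--Schwarz argument on the symmetric positive semidefinite local forms, the $L^2$-contractivity of $\Upsilon_P^0$ and $\Pi_P^0$ for \eqref{Bound_a0_h}, and the coercivity estimates via the lower stability bound plus \eqref{ellip_aDCont}--\eqref{ellip_anCont} are all sound (reading the paper's evident typos in \eqref{Bound_an_h} and \eqref{ellip_an_h} in the intended way, as you implicitly do).

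The genuine gap is in your treatment of \eqref{BoundF_h}, precisely the point you flagged as delicate. The identity $F_h(v_h)=\sum_{P}F_P(\Upsilon_P^0 v_h)$ is fine, but the function that results from applying $\Upsilon_P^0$ elementwise is a \emph{discontinuous} piecewise polynomial: it belongs to no global Sobolev space beyond $L^2(\Omega)$, it does not vanish on $\partial\Omega$, and hence neither the duality pairing $\langle f,\cdot\rangle_{-1,1,\Omega}$ nor the continuous bound \eqref{BoundFCont} can be applied to it; your subsequent $H^1$-stability step via the coincidence $\Upsilon_P^0=\Upsilon_P^{\nabla}$ only controls a broken $H^1$-seminorm, which does not pair with $\|f\|_{-1,\Omega}$. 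Note also that the very definition $F_{P,h}(v_h):=-\int_P \Upsilon_P^0 f\, v_h$ presupposes $f\in L^2(\Omega)$ (one cannot $L^2$-project an $H^{-1}$ distribution onto $\mathbb{P}_1(P)$), which is indeed the hypothesis of Theorem~\ref{UniqCont_h}. Under that hypothesis the correct elementary estimate is elementwise Cauchy--Schwarz together with $\|\Upsilon_P^0 v_h\|_{0,P}\leq \|v_h\|_{0,P}$, giving $|F_h(v_h)|\leq \|f\|_{0,\Omega}\|v_h\|_{0,\Omega}\leq \|f\|_{0,\Omega}\|v_h\|_{1,\Omega}$, which is all that the Lax--Milgram argument for Problem~\ref{ContProbH1_h} needs; since $\|f\|_{-1,\Omega}\lesssim\|f\|_{0,\Omega}$ and not conversely, the literal $H^{-1}$ bound \eqref{BoundF_h} is strictly stronger and is not recovered by your argument (nor, it should be said, by the paper's, whose statement inherits the $\|f\|_{-1,\Omega}$ from the continuous Lemma~\ref{lemboundcont} without justification for the discrete functional). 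You should either prove \eqref{BoundF_h} with $\|f\|_{0,\Omega}$ in its place, or keep $v_h$ unprojected and avoid the consistency detour altogether.
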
 

\begin{proof}
The proof of such result follows the same arguments of the proof of Lemma~\ref{lemboundcont} and 
it exploits Proposition~\ref{prop:consistency} and~\ref{prop:stability}.
\end{proof}

We are in a position to write the discrete virtual element variational formulation for problem \eqref{VarForContH2H1}. 

Find $\sigma_h\in V_h$ and $u_h\in \Psi_h$ such that 
\begin{equation}\label{VarForContH2H1_h}
a_h^{\Delta}(u_h,\psi_h) +a_h^{\nabla}(\sigma_h, v_h)+a_h^{0}(\sigma_h,\psi_h)=F_h( v_h)\quad  \forall  v_h \in V_h, \ \forall \psi_h\in \Psi_{h}.
\end{equation}

Now, the well-posedness of the  discrete variational formulation~\eqref{VarForContH2H1_h} follows with the same steps as the continous case. Indeed, we 
decompose \eqref{VarForContH2H1_h} in the  following two discrete problems.
\begin{problem}\label{ContProbH1_h}
Find $\sigma_h\in V_h$ such that 
\begin{align}
a_h^{\nabla}(\sigma_h, v_h)=F_h( v_h) \quad  \forall  v_h \in V_h.\nonumber
\end{align}
\end{problem}
\begin{problem}\label{ContProbH2_h}
Find $u_h\in  \Psi_h$ such that 
\begin{align}
\displaystyle a_h^{\Delta}(u_h,\psi_h) =-a_h^{0}(\sigma_h,\psi_h) \quad  \forall \psi_h\in \Psi_{h}.\nonumber
\end{align}
\end{problem}

Note that Lax-Milgram theorem and estimates \eqref{Bound_an_h}, \eqref{ellip_an_h}, and \eqref{BoundF_h} imply the well-posedness of Problem~\ref{ContProbH1_h}. Moreover, we apply once again the Lax-Milgram theorem joint with the estimates  \eqref{Bound_aD_h}, \eqref{ellip_aD_h}, \eqref{Bound_a0_h}, and the uniqueness of $\sigma_h\in V_h$ on Problem~\ref{ContProbH1_h} to obtain the unique solution $u_h\in \Psi_{h}$ of Problem~\ref{ContProbH2_h}. Then, we have proved the following result.
\begin{theorem}\label{UniqCont_h}
Given $f\in L^2(\Omega)$ there exists a unique $(\sigma_h,u_h)\in V_h\times \Psi_h$ such that the weak formulation \eqref{VarForContH2H1_h} holds true.
\end{theorem}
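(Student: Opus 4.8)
The plan is to mimic the proof of Theorem~\ref{UniqCont} in the discrete setting, exploiting the triangular coupling of the two unknowns. First I would observe that the coupled formulation \eqref{VarForContH2H1_h} is equivalent to the pair of decoupled problems, Problem~\ref{ContProbH1_h} and Problem~\ref{ContProbH2_h}: choosing $\psi_h=0$ in \eqref{VarForContH2H1_h} isolates $a_h^{\nabla}(\sigma_h,v_h)=F_h(v_h)$ for all $v_h\in V_h$, while choosing $v_h=0$ isolates $a_h^{\Delta}(u_h,\psi_h)=-a_h^{0}(\sigma_h,\psi_h)$ for all $\psi_h\in\Psi_h$; conversely, summing the two decoupled identities recovers \eqref{VarForContH2H1_h}. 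Hence it suffices to establish unique solvability of the two subproblems in sequence.

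Second, I would solve Problem~\ref{ContProbH1_h} for $\sigma_h$ by a direct application of the Lax--Milgram theorem on the Hilbert space $V_h$. The continuity \eqref{Bound_an_h} and coercivity \eqref{ellip_an_h} of $a_h^{\nabla}(\cdot,\cdot)$ on $V_h\times V_h$, together with the boundedness \eqref{BoundF_h} of the functional $F_h(\cdot)$, supply exactly the hypotheses required, yielding a unique $\sigma_h\in V_h$. Third, with $\sigma_h$ now fixed, I would solve Problem~\ref{ContProbH2_h} for $u_h$ by a second application of Lax--Milgram, this time on $\Psi_h$. The continuity \eqref{Bound_aD_h} and coercivity \eqref{ellip_aD_h} of $a_h^{\Delta}(\cdot,\cdot)$ give the required properties of the bilinear form, and the map $\psi_h\mapsto -a_h^{0}(\sigma_h,\psi_h)$ is a bounded linear functional on $\Psi_h$ by \eqref{Bound_a0_h} and the fact that $\sigma_h$ is a fixed element of $V_h$ of finite norm. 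This produces a unique $u_h\in\Psi_h$, and the pair $(\sigma_h,u_h)$ then solves \eqref{VarForContH2H1_h} by the equivalence established above.

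The argument is essentially mechanical once the discrete estimates \eqref{Bound_aD_h}--\eqref{ellip_an_h} are in hand, so I do not anticipate a genuine obstacle. The only point deserving a moment of care is the right-hand side in the third step: one must confirm that $-a_h^{0}(\sigma_h,\cdot)$ is a legitimate bounded functional on $\Psi_h$ before invoking Lax--Milgram, which follows from \eqref{Bound_a0_h} applied with the already-determined $\sigma_h$. Uniqueness of the full pair is inherited directly from the uniqueness in each subproblem, since the decomposition into Problem~\ref{ContProbH1_h} and Problem~\ref{ContProbH2_h} is an exact equivalence rather than a mere sufficient condition.
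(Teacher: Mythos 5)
Your proposal is correct and follows essentially the same route as the paper: the paper likewise decomposes \eqref{VarForContH2H1_h} into Problems~\ref{ContProbH1_h} and~\ref{ContProbH2_h} and applies the Lax--Milgram theorem twice, first with \eqref{Bound_an_h}, \eqref{ellip_an_h}, \eqref{BoundF_h} to get a unique $\sigma_h$, then with \eqref{Bound_aD_h}, \eqref{ellip_aD_h}, \eqref{Bound_a0_h} and the fixed $\sigma_h$ to get a unique $u_h$. Your explicit verification of the equivalence between the coupled and decoupled formulations (testing with $\psi_h=0$ and $v_h=0$) is a detail the paper leaves implicit, but it is the same argument.
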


\section{Convergence and error estimates}
\label{SecConvErrEst}

In this section we will prove that $||\sigma-\sigma_h ||_{1,\Omega}+||u-u_h||_{2,\Omega}\to 0$ when $h$ goes to zero. With this end, we will present  some preliminary interpolation results well known in the literature of the virtual element methods which will be needed to prove convergence of the discrete problem \eqref{VarForContH2H1_h}.

\subsection{Some interpolation results}
We start with the following result on star-shaped polygons, 
which is derived by interpolation between Sobolev spaces (see for instance~\cite[Theorem I.1.4]{GR}).
We mention that this result has been stated
in~\cite[Proposition 4.2]{BBCMMR2013} for integer values
and follows from the classical Scott-Dupont theory, see~\cite{BS-2008}.
\begin{proposition}\label{InterpPoly}
	If the assumption {\bf A1} is satisfied, then for every $\psi\in H^{\delta}(\E)$ there exists
	$v_{\pi}\in\mathbb{P}_k(\E)$, $k\geq 0$ such that
	\begin{equation*}
	\vert \psi-\psi_{\pi}\vert_{\ell,\E}\lesssim  h_\E^{\delta-\ell}|\psi|_{\delta,\E}\quad 0\leq\delta\leq
	k+1, \ell=0,\ldots,[\delta],
	\end{equation*}
	with $[\delta]$ denoting the largest integer equal or smaller than $\delta \in {\mathbb R}$,
\end{proposition}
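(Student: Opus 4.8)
The plan is to establish the estimate first for integer values of $\delta$ by the classical Bramble--Hilbert/Scott--Dupont argument, and then to recover all real $\delta\in[0,k+1]$ by interpolation between Sobolev spaces. For the integer case I would take $\psi_\pi$ to be the averaged Taylor polynomial $Q^k\psi$ of degree $k$, which is well defined precisely because assumption {\bf A}$_1$ guarantees that $P$ is star--shaped with respect to a ball $B_P$; the averaging is carried out against a smooth cutoff supported in $B_P$. The Scott--Dupont theory then yields, for every integer $m$ with $0\le m\le k+1$ and every integer $\ell$ with $0\le\ell\le m$,
\[
|\psi-Q^k\psi|_{\ell,P}\lesssim h_P^{\,m-\ell}|\psi|_{m,P},
\]
where the hidden constant depends only on $k$, the spatial dimension, and the chunkiness parameter of $P$, i.e.\ the ratio between $h_P$ and the radius of $B_P$. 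The uniformity of this constant across the mesh is exactly what {\bf A}$_1$ provides: mapping $P$ to a unit--diameter configuration, applying the Bramble--Hilbert lemma on the rescaled domain (whose star--shapedness parameter is bounded below independently of $h_P$), and scaling the seminorms back reproduces the factor $h_P^{\,m-\ell}$ with a constant independent of $h_P$.

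To pass from integers to an arbitrary real $\delta\in[0,k+1]$, I would fix the target order $\ell\le[\delta]$ and regard the error operator $E:=\mathrm{Id}-Q^k$ as a linear map into $H^\ell(P)$. Writing $m_0:=[\delta]$ and $m_1:=m_0+1$ for the consecutive integers bracketing $\delta$, the integer estimates above show that $E$ is bounded from $H^{m_0}(P)$ and from $H^{m_1}(P)$ into $H^\ell(P)$, with operator norms scaling like $h_P^{\,m_0-\ell}$ and $h_P^{\,m_1-\ell}$, respectively. Invoking the real interpolation identity $[H^{m_0}(P),H^{m_1}(P)]_{\theta}=H^{\delta}(P)$ with $\theta=\delta-m_0$ (cf.\ \cite[Theorem I.1.4]{GR}) together with the interpolation inequality for operator norms gives
\[
|\psi-\psi_\pi|_{\ell,P}\lesssim \big(h_P^{\,m_0-\ell}\big)^{1-\theta}\big(h_P^{\,m_1-\ell}\big)^{\theta}\,|\psi|_{\delta,P}=h_P^{\,\delta-\ell}|\psi|_{\delta,P},
\]
which is the asserted bound for all admissible $\ell$.

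I expect the main obstacle to lie not in the integer estimate, which is entirely classical, but in making the interpolation step rigorous on a single element: one must ensure that the norms used to characterise the intermediate space $H^{\delta}(P)$ have equivalence constants controlled uniformly in $h_P$, so that the scaling behaviour of the fractional seminorm is genuinely the geometric mean of the two integer endpoints rather than merely comparable up to an $h_P$--dependent factor. This uniformity is once more a consequence of the scaling afforded by {\bf A}$_1$, and verifying it carefully is the only delicate point of the argument.
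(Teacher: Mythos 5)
Your proposal is correct and follows precisely the route the paper itself indicates: the integer case via the averaged Taylor polynomial and the Scott--Dupont/Bramble--Hilbert theory (as stated in \cite[Proposition 4.2]{BBCMMR2013}, see \cite{BS-2008}), and the fractional case by real interpolation between consecutive integer Sobolev spaces (cf.\ \cite[Theorem I.1.4]{GR}). The paper gives no written proof beyond these citations, and your writeup --- including the correct identification of the one delicate point, namely the $h_\E$-uniformity of the equivalence constants in the interpolation-space characterisation of $H^{\delta}(\E)$ under assumption {\bf A}$_1$ --- faithfully fleshes out exactly that argument.
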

where we have used the following definition for the broken seminorm.
\begin{align*}
|\psi|_{\ell,h}^{2}&:=\sum_{\E\in \Omega_{h} }|\psi|_{\ell,\E}^{2} \quad
\forall \psi\in  L^2(\Omega): \psi|_P\in  H^{\ell}(P)\quad \qquad \ell=1,2.
\end{align*}
\begin{proposition}\label{InterpQh}
	Assume {\textbf{A1}} and {\textbf{A2}} are satisfied,
	let $v\in H^{1+t}(\Omega)$ with $t\in(0,1]$. Then, there exist $v_{I}\in V_h$ such that
	$$\Vert v-v_{I}\Vert_{1,\Omega}\lesssim   h^t\vert v \vert_{1+t,\Omega}.$$
\end{proposition}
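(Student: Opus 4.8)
The plan is to build $v_I$ element-by-element from the degrees of freedom, prove the estimate first at the two integer endpoints $t=0$ and $t=1$, and then recover the full range $t\in(0,1)$ by real interpolation between Sobolev spaces, which is precisely the role of \cite[Theorem~I.1.4]{GR}.

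First I would define the local interpolant. On each polyhedron $P$ let $I_h^P v\in V_h(P)$ be the unique virtual function sharing with $v$ the degrees of freedom ${\bf D_0}$, i.e. the vertex values. Since on every face the trace of a function in $V_h(P)$ lies in the face space $V_h(f)$ and is uniquely fixed by the vertex values on that face, two neighbouring interpolants agree on the common face; hence the glued function $v_I$, defined by $v_I|_P:=I_h^P v$, is continuous across faces, belongs to $H^1(\Omega)$, and therefore to $V_h$. The same unisolvence shows $I_h^P$ reproduces linears, $I_h^P q=q$ for all $q\in\mathbb{P}_1(P)\subset V_h(P)$.

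Next I would establish the two local endpoint estimates on a fixed $P$. The key ingredient is a scaled norm equivalence for the enhanced space, namely $|w_h|_{1,P}\lesssim (\text{scaled DOF-norm of } w_h)$ for $w_h\in V_h(P)$, obtained from the computability of $\Upsilon_P^{\nabla}$, the enhancement constraint, and standard scaling to a reference configuration guaranteed by \textbf{A1}–\textbf{A2}; combined with a scaled trace inequality this yields the $H^1$-stability $|I_h^P v|_{1,P}\lesssim \|v\|_{1,P}$ (with the usual $h_P$-weights). Writing $v-I_h^P v=(v-v_\pi)-I_h^P(v-v_\pi)$ with $v_\pi\in\mathbb{P}_1(P)$ the polynomial of Proposition~\ref{InterpPoly}, and using $I_h^P v_\pi=v_\pi$ together with this stability, I obtain the two bounds
$$\|v-I_h^P v\|_{1,P}\lesssim \|v\|_{1,P}\quad(t=0),\qquad \|v-I_h^P v\|_{1,P}\lesssim h_P\,|v|_{2,P}\quad(t=1),$$
the second by applying Proposition~\ref{InterpPoly} with $\delta=2$, $k=1$. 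Viewing $T:=\mathrm{Id}-I_h^P$ as a bounded operator on the scale $\{H^1(P),H^2(P)\}$, I then apply \cite[Theorem~I.1.4]{GR} to conclude, for $v\in H^{1+t}(P)$, that $\|v-I_h^P v\|_{1,P}\lesssim h_P^{t}\,|v|_{1+t,P}$ (the passage from full norms to the seminorm again uses $I_h^P q=q$ on $\mathbb{P}_1$ and a Bramble–Hilbert argument). Squaring, summing over $P\in\Omega_h$, and bounding $h_P\le h$ delivers $\|v-v_I\|_{1,\Omega}\lesssim h^{t}|v|_{1+t,\Omega}$.

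I expect the main obstacle to be the $t=0$ endpoint: in three dimensions point evaluation is not controlled by the $H^1$-norm, so the nodal/DOF interpolant is not literally $H^1$-bounded. Some care is therefore needed either to phrase the stability through the discrete norm equivalence on $V_h(P)$ (which is what makes the interpolation-space argument legitimate), or to restrict the clean nodal construction to $t>1/2$ and treat $t\le 1/2$ via a Clément/Scott–Zhang-type regularization before interpolating. Making this scaled norm equivalence for the enhanced polyhedral space precise and uniform in the mesh is the technically heaviest step.
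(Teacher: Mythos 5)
There is a genuine gap, and it sits exactly where you flagged it but did not resolve it: your argument is organized as operator interpolation between the endpoints $t=0$ and $t=1$, and the $t=0$ endpoint is false, not merely delicate. In three dimensions $H^{1+t}(P)$ embeds into $C^0(\overline{P})$ only for $t>1/2$, so for $v\in H^1(P)$ (and indeed for all $t\in(0,1/2]$) the vertex values defining your nodal interpolant $I_h^P v$ do not exist, and no bound of the form $|I_h^P v|_{1,P}\lesssim \|v\|_{1,P}$ can hold. The discrete norm equivalence on $V_h(P)$ cannot rescue this: it controls a \emph{virtual} function by its degrees of freedom, but it says nothing about controlling the DOFs of a general Sobolev function $v$ by $\|v\|_{1,P}$, which is precisely what fails. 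Consequently the interpolation-space argument via \cite[Theorem I.1.4]{GR} cannot get off the ground with these endpoints; the best you could extract from the nodal construction is the range $t\in(1/2,1]$, by interpolating between $H^{3/2+\epsilon}(P)$ and $H^2(P)$, which covers only half of the stated range $t\in(0,1]$. Your closing hedge (``treat $t\le 1/2$ via a Cl\'ement/Scott--Zhang-type regularization'') names the correct repair but is not carried out, and the main body of the proof stands on the false endpoint.

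For comparison: the paper does not prove this proposition at all; it cites \cite{BBCMMR2013,MRR2015}, and the argument there is structured the other way around from yours. One first builds a Cl\'ement (or Scott--Zhang) quasi-interpolant $v_c$ of $v$ on an auxiliary shape-regular sub-tetrahedralization of each polyhedron (available under \textbf{A1}--\textbf{A3}), which is well defined for every $v\in H^1(\Omega)$ and already satisfies fractional-order estimates $\|v-v_c\|_{1,P}\lesssim h_P^t|v|_{1+t,\omega_P}$ for all $t\in(0,1]$; only then does one define $v_I\in V_h$ through the (now well-defined) degrees of freedom of $v_c$, and one estimates the remaining discrepancy $\|v_c-v_I\|_{1,P}$ by the scaled stability of the local virtual problem together with norm equivalence on $V_h(P)$. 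In that ordering, the fractional rate is inherited from the quasi-interpolant rather than manufactured by operator interpolation of a nodal map, and the point-evaluation obstruction never arises. If you restructure your proof along these lines --- regularize first, interpolate DOFs second --- the remaining ingredients you invoke (polynomial reproduction, Proposition~\ref{InterpPoly}, scaling under \textbf{A1}--\textbf{A2}) are used essentially as you describe and the full range $t\in(0,1]$ is recovered.
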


\begin{proposition}\label{InterpVh}
Assume {\textbf{A1}} and {\textbf{A2}} are satisfied, 
let $\psi\in H^{\varepsilon}(\Omega)$ with $\varepsilon\in[2,3]$. Then, there exist $\psi_I\in \Psi_h$ such that 
\begin{equation*}
\Vert \psi-\psi_{I}\Vert_{\ell,\Omega}\lesssim
h^{\varepsilon-\ell}\Vert \psi\Vert_{\varepsilon,\Omega},\qquad \ell=0,1,2.
\end{equation*}
\end{proposition}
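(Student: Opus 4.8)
The plan is to construct $\psi_I$ element by element and then exploit the $H^2$-conformity of $\Psi_h$ to pass from broken to global norms. On each polyhedron $P$ I would introduce the local interpolation operator $I_P : H^{\varepsilon}(P) \to \Psi_h(P)$ associated with the degrees of freedom $\mathbf{D_1}$ and $\mathbf{D_2}$ (the vertex values of $\psi_h$ and of $\nabla \psi_h$), and define $\psi_I$ as the function whose restriction to each $P$ equals $I_P\psi$. Because neighbouring elements share the same vertex degrees of freedom, the local pieces glue into a single $\psi_I \in \Psi_h \subset H^2(\Omega)$, so that $\Vert\psi - \psi_I\Vert_{\ell,\Omega}^2 = \sum_{P\in\Omega_h}\Vert\psi - I_P\psi\Vert_{\ell,P}^2$ and it suffices to establish the local estimate $\Vert\psi - I_P\psi\Vert_{\ell,P}\lesssim h_P^{\varepsilon-\ell}\,|\psi|_{\varepsilon,P}$ for $\ell=0,1,2$.

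For the local estimate I would combine polynomial approximation with the stability of $I_P$. Since $\mathbb{P}_2(P)\subset\Psi_h(P)$ (a direct check against the defining conditions of the polyhedron space, using $\Delta^2 q=0$ and the face/edge restrictions) and the degrees of freedom are unisolvent, the operator $I_P$ reproduces quadratics, i.e. $I_P q = q$ for all $q\in\mathbb{P}_2(P)$. Invoking Proposition~\ref{InterpPoly} with $k=2$, I pick $\psi_\pi\in\mathbb{P}_2(P)$ satisfying $|\psi-\psi_\pi|_{\ell,P}\lesssim h_P^{\varepsilon-\ell}|\psi|_{\varepsilon,P}$, and write
\[
\psi - I_P\psi = (\psi-\psi_\pi) - I_P(\psi-\psi_\pi).
\]
The triangle inequality then reduces the task to bounding $\Vert I_P(\psi-\psi_\pi)\Vert_{\ell,P}$. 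I would control this term by mapping $P$ to a reference configuration of unit diameter (the star-shapedness assumptions $\mathbf{A1}$ and $\mathbf{A2}$ guarantee the uniform shape-regularity needed for the constants to stay bounded), proving on the reference element that $I_{\hat P}$ is bounded in terms of the vertex values of the function and of its gradient, and scaling back; the powers of $h_P$ produced by the change of variables combine with the approximation estimate for $\psi-\psi_\pi$ to yield the claimed rate for each $\ell$.

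The delicate point, and the main obstacle, is precisely the stability of $I_P$, because the degrees of freedom $\mathbf{D_2}$ are point values of $\nabla\psi$, which are continuous functionals on $H^{\varepsilon}(P)$ only when $\varepsilon-1>3/2$, that is for $\varepsilon>5/2$. In the upper part of the range the bound $|\nabla\psi(x_i)|\lesssim \Vert\psi\Vert_{\varepsilon,P}$ follows from the Sobolev embedding $H^{\varepsilon-1}(P)\hookrightarrow C^0(\overline P)$, and together with the analogous bound for the vertex values of $\psi$ (valid for all $\varepsilon\geq 2$) it controls every degree of freedom. For the borderline range $\varepsilon\in[2,5/2]$ I would not interpolate $\psi$ directly but replace the pointwise derivative functionals by local averages in the spirit of a Cl\'ement/Scott--Zhang quasi-interpolant: one smooths $\psi$ over a small star around each vertex before reading off its degrees of freedom, which keeps the operator well defined on $H^2(P)$, still reproduces $\mathbb{P}_2(P)$, and enjoys the same scaling-invariant stability; alternatively one invokes the interpolation construction already carried out for this space in~\cite{C1VEM_Polyhedral,BDR2019C1Polyhedral}. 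Once stability is secured for each $\ell$, the reference-element scaling argument delivers the local estimates, and summation over $\Omega_h$ together with the conformity of $\Psi_h$ completes the proof.
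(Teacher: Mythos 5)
Your proposal is correct in outline, but it is worth noting that it does far more than the paper itself: the paper offers no argument for Proposition~\ref{InterpVh} at all, disposing of it in one line by citing \cite{ABSVsinum16,BM13} (which are two-dimensional $C^1$ VEM papers) together with \cite{BBCMMR2013,MRR2015} for the $C^0$ case, whereas the genuinely three-dimensional construction lives in \cite{C1VEM_Polyhedral,BDR2019C1Polyhedral} --- the very references you invoke as your fallback, so your attribution is in fact the more accurate one. Your route is the standard one underlying those citations: verify $\mathbb{P}_2(P)\subset\Psi_h(P)$ (your check via $\Delta^2 q=0$, the face/edge restrictions and the trivially satisfied enhancing constraints is right), use unisolvence of ${\bf D_1}$--${\bf D_2}$ to define a DOF interpolant reproducing quadratics, combine with Proposition~\ref{InterpPoly} through the identity $\psi-I_P\psi=(\psi-\psi_\pi)-I_P(\psi-\psi_\pi)$, and conclude by stability plus scaling; and you correctly isolate the one delicate point, namely that the vertex-gradient functionals are not continuous on $H^{\varepsilon}(P)$ for $\varepsilon\le 5/2$, for which your Cl\'ement/Scott--Zhang averaging (reading the DOFs off a patchwise polynomial regularization of $\psi$) is the right device --- note that conformity survives because the averaging is attached to vertices, hence consistent across adjacent elements, and that polynomial reproduction then holds only patchwise, so the local bound carries patch seminorms and the global estimate needs the finite-overlap property guaranteed by {\bf A}$_1$--{\bf A}$_3$. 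Two points in your sketch deserve flagging as nontrivial rather than routine: first, the ``stability of $I_{\hat P}$ on a reference configuration'' is not a classical reference-element argument in VEM, since there is no fixed reference polyhedron and virtual functions are defined implicitly through a local biharmonic problem, so bounding $\Vert I_P v\Vert_{2,P}$ by the degrees of freedom with constants uniform over the mesh family is precisely the substantive stability analysis carried out in \cite{BDR2019C1Polyhedral}; second, for fractional $\varepsilon\in(2,3)$ one should either run the Bramble--Hilbert/scaling argument in fractional norms or prove the endpoint cases $\varepsilon=2,3$ and conclude by interpolation of Sobolev spaces, as the paper does implicitly via Proposition~\ref{InterpPoly}. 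With those two ingredients made explicit (or delegated to the cited works, exactly as the paper does wholesale), your proof is complete.
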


The proof for two last interpolation results on the virtual spaces $V_h$ and $\Psi_h$ can be checked in  \cite{BBCMMR2013,MRR2015} and \cite{ABSVsinum16,BM13}, respectively.

\subsection{A priori error estimates}
In this section we present the error estimates for the discrete scheme analyzed in this work. We start with the estimates for  $\sigma_h$.
\begin{lemma}
Let $\sigma  \in V$ and $\sigma_h\in V_h$ be the unique solutions of Problems~\ref{ContProbH1} and \ref{ContProbH1_h}, then the following result holds 
\begin{equation}
||\sigma - \sigma_h ||_{0,\Omega} + h
|\sigma - \sigma_h |_{1,\Omega}\lesssim h^2.
\label{addreg_H1_sigma}
\end{equation}
\end{lemma}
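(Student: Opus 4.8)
The statement is the familiar pair of $H^1$- and $L^2$-error estimates for the virtual element discretisation of the Poisson-type problem (Problem~\ref{ContProbH1} versus Problem~\ref{ContProbH1_h}), so the plan is a Strang/Cé{}a energy estimate followed by an Aubin--Nitsche duality argument. Before anything else I would sharpen the regularity of $\sigma$: since $\Omega$ is convex and $\sigma$ solves $\Delta\sigma=f$ with $\sigma|_\Gamma=0$ and $f\in L^2(\Omega)$, elliptic regularity gives $\sigma\in H^2(\Omega)$ with $\|\sigma\|_{2,\Omega}\lesssim\|f\|_{0,\Omega}$; this refines the $H^1$-bound of Lemma~\ref{Lema_RegAd}, Remark~\ref{remark_regul_ad} only excluding regularity beyond $H^2$. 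I would treat the fixed data norms $\|f\|_{0,\Omega}$ and $|f|_{1,\Omega}$ as constants absorbed into $\lesssim$, so that $h|\sigma|_{2,\Omega}\lesssim h$ and the final bound reads $h^2$.

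For the $H^1$ estimate, set $\delta_h:=\sigma_h-\sigma_I$ with $\sigma_I\in V_h$ the interpolant of Proposition~\ref{InterpQh} (taking $t=1$) and $\sigma_\pi$ the piecewise $\mathbb{P}_1$ approximation of Proposition~\ref{InterpPoly}. Using coercivity \eqref{ellip_an_h} I would start from $C\|\delta_h\|_{1,\Omega}^2\le a_h^\nabla(\delta_h,\delta_h)=F_h(\delta_h)-a_h^\nabla(\sigma_I,\delta_h)$, insert $\sigma_\pi$, and exploit the polynomial consistency of Proposition~\ref{prop:consistency} together with the continuous identity $a^\nabla(\sigma,\delta_h)=F(\delta_h)$ to replace the discrete form by its continuous counterpart up to the three controllable terms $a_{P,h}^\nabla(\sigma_I-\sigma_\pi,\delta_h)$, $a_P^\nabla(\sigma_\pi-\sigma,\delta_h)$ and the load consistency $F_h(\delta_h)-F(\delta_h)$. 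Boundedness \eqref{Bound_an_h} and the interpolation estimates bound the first two by $h|\sigma|_{2,\Omega}\|\delta_h\|_{1,\Omega}$; the load term I handle via the orthogonality of $\Upsilon_P^0$, writing $F_h(v_h)-F(v_h)=\sum_P\int_P(f-\Upsilon_P^0 f)(v_h-\Upsilon_P^0 v_h)$ since $f-\Upsilon_P^0 f\perp\mathbb{P}_1(P)$, and estimating it by $\sum_P\|f-\Upsilon_P^0 f\|_{0,P}\,h_P|v_h|_{1,P}$. Here $\|f-\Upsilon_P^0 f\|_{0,P}\le\|f\|_{0,P}$ already yields the needed $O(h)$, so a triangle inequality gives $|\sigma-\sigma_h|_{1,\Omega}\lesssim h$.

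For the $L^2$ estimate I would run Aubin--Nitsche: let $z\in V$ solve $a^\nabla(z,v)=(\sigma-\sigma_h,v)_{0,\Omega}$ for all $v\in V$, so convexity gives $z\in H^2(\Omega)$ with $\|z\|_{2,\Omega}\lesssim\|\sigma-\sigma_h\|_{0,\Omega}$, and let $z_I\in V_h$ be its interpolant. Decomposing $\|\sigma-\sigma_h\|_{0,\Omega}^2=a^\nabla(\sigma-\sigma_h,z-z_I)+[F(z_I)-F_h(z_I)]+[a_h^\nabla(\sigma_h,z_I)-a^\nabla(\sigma_h,z_I)]$, I would bound the first term by $|\sigma-\sigma_h|_{1,\Omega}\,h\|z\|_{2,\Omega}$, the load consistency by the same orthogonality trick now using $\|f-\Upsilon_P^0 f\|_{0,P}\lesssim h_P|f|_{1,P}$ to reach $O(h^2)\|z\|_{2,\Omega}$, and the bilinear variational crime—after inserting the projections $\sigma_\pi,z_\pi$ and using Proposition~\ref{prop:consistency}—by $\lesssim(|\sigma-\sigma_h|_{1,\Omega}+h|\sigma|_{2,\Omega})\,h\|z\|_{2,\Omega}$. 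Since $|\sigma-\sigma_h|_{1,\Omega}\lesssim h$, every term is $O(h^2)\|\sigma-\sigma_h\|_{0,\Omega}$, whence $\|\sigma-\sigma_h\|_{0,\Omega}\lesssim h^2$.

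I expect the main obstacle to be the bookkeeping of the two variational crimes in the duality step. The delicate one is the load-term consistency $F(z_I)-F_h(z_I)$: its $O(h^2)$ bound forces the assumption $f\in H^1(\Omega)$, the use of the $\mathbb{P}_1$ (rather than $\mathbb{P}_0$) projection in $F_{P,h}$ being exactly what makes the extra power of $h$ available. The bilinear crime is subtler only in that it reaches $O(h^2)$ solely because the already-proven rate $|\sigma-\sigma_h|_{1,\Omega}\lesssim h$ is fed back into it, so the two estimates must be established in this order.
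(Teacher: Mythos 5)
Your argument is correct in substance, but be aware that the paper does not actually prove this lemma: its entire ``proof'' is the sentence that the result has been established in \cite{HO_C0VEM_Polyhedral}. What you wrote is therefore a self-contained reconstruction of the standard argument that this citation stands in for --- a Strang-type energy estimate (coercivity of $a_h^{\nabla}$, polynomial consistency from Proposition~\ref{prop:consistency}, insertion of the interpolant $\sigma_I$ and the piecewise polynomial $\sigma_\pi$) followed by Aubin--Nitsche duality for the $L^2$ bound --- and every step of your chain is sound. Two points of friction with the paper are worth recording, and in both you are the one who is right. First, your use of elliptic regularity $\sigma\in H^2(\Omega)$ with $\Vert\sigma\Vert_{2,\Omega}\lesssim\Vert f\Vert_{0,\Omega}$ contradicts Remark~\ref{remark_regul_ad}, which asserts that $\sigma$ ``does not have additional regularity''; but convexity is already invoked in Section~2 to get $u\in H^5(\Omega)$, it yields $H^2$ regularity for the Dirichlet Laplacian with $L^2$ data on a convex polyhedron, and without regularity beyond $H^1$ the rate $|\sigma-\sigma_h|_{1,\Omega}\lesssim h$ in \eqref{addreg_H1_sigma} could not hold at all, since Proposition~\ref{InterpQh} with $t=1$ is exactly what your energy step consumes. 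Second, you correctly flag that the $O(h^2)$ bound on the load consistency $F(z_I)-F_h(z_I)$ in the duality step forces $f\in H^1(\Omega)$ (at least piecewise): with $f\in L^2(\Omega)$ only, one has merely $\Vert f-\Upsilon_P^0 f\Vert_{0,P}\le\Vert f\Vert_{0,P}$, the term is $o(h)$ with no rate, and the clean $h^2$ in \eqref{addreg_H1_sigma} fails; the lemma as stated (with $f\in L^2$ elsewhere in the paper) leaves this hypothesis implicit in the citation.

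One small inaccuracy in your closing commentary: the $\mathbb{P}_1$ projection in $F_{P,h}$ is not ``exactly what makes the extra power of $h$ available''. Orthogonality to constants alone already suffices, since subtracting the mean from the test function gives $\int_P (f-q)(z_I-c)\lesssim h_P|f|_{1,P}\,h_P|z_I|_{1,P}$ with $q$ the $L^2(P)$-projection of $f$ onto constants and $c$ the mean of $z_I$; the $\mathbb{P}_1$ projection is convenient, not necessary. Also, when you bound $\Vert z_I-\Upsilon_P^0 z_I\Vert_{0,P}$ you should say explicitly that you use only the first-order estimate $h_P|z_I|_{1,P}$ together with the $H^1$-stability $\Vert z_I\Vert_{1,\Omega}\lesssim\Vert z\Vert_{2,\Omega}$ (a second-order estimate would require control of $|z_I|_{2,P}$, which is unavailable for virtual functions). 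These are presentation-level corrections; the order of the two estimates (energy first, duality fed by it) and the claimed rates are exactly as they should be.
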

\begin{proof}
The proof has been established in \cite{HO_C0VEM_Polyhedral}.
\end{proof}

Now, the following result establishes the order of convergence for $u_h$.
\begin{theorem}
Let $(\sigma,u)\in V\times \Psi$ and $(\sigma_h,u_h)\in V_h\times \Psi_h$ be the unique solutions of Problems~\ref{ContProbH1}-\ref{ContProbH2} and \ref{ContProbH1_h}-\ref{ContProbH2_h}, respectively, then the following result holds
\begin{equation}
|u - u_h |_{2,\Omega}\lesssim h.
\label{sn2_uuh}
\end{equation}
\end{theorem}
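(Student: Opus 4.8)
The plan is to establish the error bound $|u-u_h|_{2,\Omega}\lesssim h$ via the standard Strang-type argument for VEM, exploiting the fact that $u$ solves Problem~\ref{ContProbH2} while $u_h$ solves the discrete Problem~\ref{ContProbH2_h}, together with the already-established estimate~\eqref{addreg_H1_sigma} for $\sigma_h$ and the additional regularity $u\in H^5(\Omega)$ from Lemma~\ref{Lema_RegAd}. First I would introduce the interpolant $u_I\in\Psi_h$ from Proposition~\ref{InterpVh} and a piecewise polynomial approximation $u_\pi$ from Proposition~\ref{InterpPoly}, and use the triangle inequality
\begin{equation*}
|u-u_h|_{2,\Omega}\le |u-u_I|_{2,\Omega}+|u_I-u_h|_{2,\Omega}.
\end{equation*}
The first term is controlled directly by Proposition~\ref{InterpVh} (taking $\ell=2$, $\varepsilon=3$), giving $|u-u_I|_{2,\Omega}\lesssim h\,\Vert u\Vert_{3,\Omega}$, which is admissible since $u\in H^5\subset H^3$.

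Next I would estimate the discrete term $\delta_h:=u_I-u_h\in\Psi_h$. Using the discrete ellipticity~\eqref{ellip_aD_h} of $a_h^\Delta$ I would write $C_{P_2}\min\{1,\alpha_*\}\,\Vert\delta_h\Vert_{2,\Omega}^2\le a_h^\Delta(\delta_h,\delta_h)=a_h^\Delta(u_I-u_h,\delta_h)$ and then insert $a_h^\Delta(u_h,\delta_h)=-a_h^0(\sigma_h,\delta_h)$ from Problem~\ref{ContProbH2_h}. The aim is to compare with the continuous equation $a^\Delta(u,\delta_h)=-a^0(\sigma,\delta_h)$ from Problem~\ref{ContProbH2}. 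Splitting element-by-element and inserting $u_\pi$, I would exploit the consistency property of Proposition~\ref{prop:consistency} (namely $a_{\E,h}^\Delta(q,\psi_h)=a_{\E}^\Delta(q,\psi_h)$ for $q\in\mathbb{P}_2(\E)$) to replace the discrete form acting on the polynomial part with the continuous one, leaving three groups of terms to bound: an approximation term of the form $a_h^\Delta(u_I-u_\pi,\delta_h)$ controlled by the stability bound~\eqref{Bound_aD_h} together with Propositions~\ref{InterpPoly} and~\ref{InterpVh}; a consistency error $a^\Delta(u-u_\pi,\delta_h)$ bounded by~\eqref{Bound_aDCont} and Proposition~\ref{InterpPoly}; and the crucial right-hand-side mismatch $a_h^0(\sigma_h,\delta_h)-a^0(\sigma,\delta_h)$.

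The main obstacle will be this last term, the coupling through the zeroth-order form. I would decompose it as
\begin{equation*}
a_h^0(\sigma_h,\delta_h)-a^0(\sigma,\delta_h)
=\bigl(a_h^0(\sigma_h,\delta_h)-a_h^0(\sigma_I,\delta_h)\bigr)
+\bigl(a_h^0(\sigma_I,\delta_h)-a^0(\sigma,\delta_h)\bigr),
\end{equation*}
where $\sigma_I\in V_h$ is the interpolant of Proposition~\ref{InterpQh}. The first bracket is bounded by~\eqref{Bound_a0_h} using $\Vert\sigma_h-\sigma_I\Vert_{0,\Omega}\le\Vert\sigma-\sigma_h\Vert_{0,\Omega}+\Vert\sigma-\sigma_I\Vert_{0,\Omega}\lesssim h^2$, where the $O(h^2)$ for $\sigma-\sigma_h$ comes from~\eqref{addreg_H1_sigma} and for $\sigma-\sigma_I$ from Proposition~\ref{InterpQh}; the second bracket combines the consistency of $a_{\E,h}^0$ (Proposition~\ref{prop:consistency}) with interpolation of $\sigma$, again yielding at least $O(h)$. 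Collecting all contributions and dividing through by $\Vert\delta_h\Vert_{2,\Omega}$ gives $\Vert\delta_h\Vert_{2,\Omega}\lesssim h$, and combining with the interpolation estimate for $u-u_I$ completes the bound~\eqref{sn2_uuh}. The delicate point to watch is that one must not lose a factor of $h$ when passing the low-regularity error in $\sigma$ through the coupling form: since $\sigma$ carries no extra regularity (Remark~\ref{remark_regul_ad}), it is essential that the $\sigma$-contribution already enters at order $h^2$ via~\eqref{addreg_H1_sigma}, so that the dominant term in the final estimate is genuinely the $O(h)$ coming from the $H^2$-approximation of $u$.
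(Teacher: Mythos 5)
Your overall skeleton coincides with the paper's: the paper invokes the first Strang lemma (their bound \eqref{strang1_u}), which produces exactly your three terms --- the best-approximation term handled via $u_I$ from Proposition~\ref{InterpVh} and Lemma~\ref{Lema_RegAd}, the consistency term for $a_h^{\Delta}$ handled via $u_{\pi}\in\mathbb{P}_2(P)$ from Proposition~\ref{InterpPoly} together with Propositions~\ref{prop:consistency} and~\ref{prop:stability}, and the coupling mismatch $a_h^{0}(\sigma_h,\cdot)-a^{0}(\sigma,\cdot)$ fed by \eqref{addreg_H1_sigma} --- and your coercivity argument on $\delta_h=u_I-u_h$ is precisely that lemma unrolled. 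The one genuine flaw is in your treatment of the coupling term.

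You claim $\Vert\sigma-\sigma_I\Vert_{0,\Omega}\lesssim h^2$ ``from Proposition~\ref{InterpQh}''. That estimate is not available: Proposition~\ref{InterpQh} gives only the $H^1$-norm bound $\Vert\sigma-\sigma_I\Vert_{1,\Omega}\lesssim h^{t}\vert\sigma\vert_{1+t,\Omega}$ and requires $\sigma\in H^{1+t}(\Omega)$ with $t>0$, whereas $\sigma$ here has no Sobolev regularity beyond $H^1$ --- Remark~\ref{remark_regul_ad} states this explicitly, and it is precisely why the paper channels the quadratic rate through the \emph{discrete} solution via \eqref{addreg_H1_sigma} (obtained by duality in the cited reference) rather than through an interpolant of $\sigma$; an $O(h^2)$ interpolation estimate in $L^2$ would need $\sigma\in H^2(\Omega)$. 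Relatedly, the consistency property of $a_{P,h}^{0}$ in Proposition~\ref{prop:consistency} holds for a polynomial first argument $q\in\mathbb{P}_1(P)$, not for the virtual function $\sigma_I$, so your second bracket also needs an intermediate piecewise polynomial before consistency can be invoked. Both defects are repairable without new ideas, because the theorem only requires $O(h)$ from this term: either replace $\sigma_I$ by $\sigma_{\pi}$ with $\Vert\sigma-\sigma_{\pi}\Vert_{0,P}\lesssim h_P\vert\sigma\vert_{1,P}$ (Proposition~\ref{InterpPoly} with $\delta=1$, $\ell=0$), or argue as the paper does, using the definitions of $\Upsilon_P^{0}$ and $\Pi_P^{0}$ to reduce the mismatch on each $P$ to $\int_P(\Upsilon_P^{0}\sigma_h-\sigma)(\varphi_h-\Pi_P^{0}\varphi_h)$ and then bounding $\Vert\Upsilon_P^{0}\sigma_h-\sigma\Vert_{0,P}\le\Vert\sigma-\sigma_h\Vert_{0,P}+\Vert\sigma-\Upsilon_P^{0}\sigma\Vert_{0,P}$, where the first piece is $O(h^2)$ by \eqref{addreg_H1_sigma} and the second is $O(h)$ --- enough, combined with the $O(h)$ terms in $u$, to conclude \eqref{sn2_uuh}. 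Your closing caveat that the $\sigma$-contribution ``already enters at order $h^2$'' is therefore accurate only for the $\sigma-\sigma_h$ part; the approximation-of-$\sigma$ part enters at $O(h)$, which still suffices.
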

\begin{proof}
Let us consider $u$ and $u_h$ be the unique solutions of Problems~\ref{ContProbH2} and \ref{ContProbH2_h}, then from first Strang Lemma (see for instance \cite[Theorem~4.1.1]{ciarlet}) we get:
\begin{align}
||u-u_h ||_{2,\Omega} \leq C 
\Bigg\{  
&\inf\limits_{\psi_h \in \Psi_h}
\Bigg[
||u-\psi_h||_{2,\Omega} 
+\sup_{\substack{\varphi_h \in \varphi_h\\ \varphi_h\neq 0}}
\sum\limits_{P\in \Omega_h}
\frac{
\{a_{P,h}^{\Delta}(\varphi_h,\psi_h) - a_{P}^{\Delta}(\varphi_h,\psi_h) \}
}{||\varphi_h||_{2,\Omega}} 
\Bigg]
\nonumber\\
+&\sup_{\substack{\varphi_h \in \varphi_h\\ \varphi_h\neq 0}}\sum\limits_{P\in \Omega_h}
\frac{
\{a_{P,h}^{0}(\sigma_h,\varphi_h) - a_{P}^{0}(\sigma,\varphi_h) \}
}{||\varphi_h||_{2,\Omega}}
\Bigg\}.
\label{strang1_u}
\end{align}
In what follows we will bound the three terms on the right hand side of \eqref{strang1_u}. Indeed, from Lemma~\ref{Lema_RegAd} and Proposition~\ref{InterpVh} we have that there exists $u_I\in \Psi_h$ such that 
\begin{equation}
 ||u-u_{I} ||_{2,\Omega} \leq C h ||u||_{3,\Omega}.   \label{bound_uI}
\end{equation}
Thus, we have from \eqref{bound_uI} and Lemma~\ref{Lema_RegAd} that
\begin{equation}
\inf\limits_{\psi_h \in \Psi_h}
||u-\psi_h||_{2,\Omega} \leq 
    ||u-u_{I} ||_{2,\Omega} \leq C h ||u||_{3,\Omega} \leq C h ||f||_{0,\Omega}. \label{bound1_sn2}
\end{equation}
Now, for the second term we use again Lemma~\ref{Lema_RegAd} and Proposition~\ref{InterpPoly} to chose $u_{\pi}\in L^2(\Omega)$ such that $u_{\pi}\in \mathbb{P}_2(P)\ \forall P\in \Omega_h$ and
\begin{equation}
    |u-u_{\pi} |_{2,P} \leq  C h |u|_{3,P}. \label{bound_upi}
\end{equation}
Then, we use the consistency and stability properties of the bilinear form $a_{P,h}^{\Delta}(\cdot,\cdot)$  (cf. Propositions~\ref{prop:consistency} and \ref{prop:stability}, respectively), and \eqref{bound_upi} to obtain
\begin{align}
&\sum\limits_{P\in \Omega_h}
\frac{
\{a_{P,h}^{\Delta}(\varphi_h,\psi_h) - a_{P}^{\Delta}(\varphi_h,\psi_h) \}
}{||\varphi_h||_{2,\Omega}}
=
\sum\limits_{P\in \Omega_h}
\frac{
\{a_{P,h}^{\Delta}(\varphi_h,\psi_h-u_{\pi}) - a_{P}^{\Delta}(\varphi_h,u_{\pi}-\psi_h) \}
}{||\varphi_h||_{2,\Omega}}\nonumber\\
&\sum\limits_{P\in \Omega_h} (1+\alpha^*)a_P^{\Delta}(u_{\pi}-\psi_h,u_{\pi}-\psi_h)^{1/2}\leq (1+\alpha^*)\Big(
\Big\{ \sum\limits_{P\in \Omega_h}|u-u_{\pi} |_{2,P}\Big\} + ||u-\psi_h||_{2,\Omega} \Big)\leq Ch||f||_{0,\Omega}\nonumber\\
&\lesssim h.\label{bound2_sn2}
\end{align}
To bound the third term we use the consistency and stability properties of the bilinear form $a_{P,h}^{0}(\cdot,\cdot)$ (cf. Propositions~\ref{prop:consistency} and \ref{prop:stability}, respectively) to get
\begin{align}
\sum\limits_{P\in \Omega_h}
\frac{
\{a_{P,h}^{0}(\sigma_h,\varphi_h) - a_{P}^{0}(\sigma,\varphi_h) \}
}{||\varphi_h||_{2,\Omega}}
&=
\sum\limits_{P\in \Omega_h}
\frac{
\{a_{P}^{0}(\Upsilon_P^0\sigma_h,\Pi_P^0\varphi_h) - a_{P}^{0}(\sigma,\varphi_h) \}
}{||\varphi_h||_{2,\Omega}}\nonumber\\
&=
\sum\limits_{P\in \Omega_h}
\frac{
\{a_{P}^{0}(\Upsilon_P^0\sigma_h - \sigma,\varphi_h) \}
}{||\varphi_h||_{2,\Omega}}\nonumber\\
&=
\sum\limits_{P\in \Omega_h}
\frac{
\{a_{P}^{0}(\Upsilon_P^0\sigma_h - \sigma,\varphi_h-\Upsilon_P^0\varphi_h) \}
}{||\varphi_h||_{2,\Omega}}\nonumber\\
&\leq 
\sum\limits_{P\in \Omega_h}
\frac{
||\Upsilon_P^0\sigma_h -\sigma||_{0,P}||\varphi_h-\Upsilon_P^0\varphi_h ||_{0,P}
}{||\varphi_h||_{2,\Omega}}\nonumber\\
&\leq C h^2 ||\sigma||_{1,\Omega}
\lesssim h^2,
\label{bound3_sn2}
\end{align}
where we have employed the definitions of projectors $\Upsilon_P^0$ and $\Pi_P^0$ and \eqref{addreg_H1_sigma}.

Therefore, from estimates \eqref{bound1_sn2}, \eqref{bound2_sn2} and \eqref{bound3_sn2} the proof is finished.
\end{proof}

\subsubsection{Estimates in $H^1$ and $L^2$  for $u-u_h$}
In this section we will derive error bounds in $H^1$ and $L^2$ to estimate  $u-u_h$. To obtain that, we will resort to the standard duality arguments. More precisely, we will follow the steps applied in \cite{ChM-camwa} where the authors derive estimates in $H^1$ and $L^2$ for a bi-dimensional fourth order problem with clamped boundary conditions.

\begin{theorem}\label{Theo_SN1_u}
Let $(\sigma,u)\in V\times \Psi$ and $(\sigma_h,u_h)\in V_h\times \Psi_h$ be the unique solutions of Problems~\ref{ContProbH1}-\ref{ContProbH2} and \ref{ContProbH1_h}-\ref{ContProbH2_h}, respectively, then the following result holds
\begin{equation*}
|u - u_h |_{1,\Omega}\lesssim h^2.
\label{sn1_uuh}
\end{equation*}
\end{theorem}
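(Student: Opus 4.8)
The plan is to use a standard Aubin--Nitsche duality argument to gain one power of $h$ over the $H^2$ estimate $|u-u_h|_{2,\Omega}\lesssim h$ already proved. The key is to introduce an auxiliary dual problem whose right-hand side is the error $u-u_h$ measured in the $H^1$-seminorm. Specifically, I would let $(\tilde\sigma,\tilde u)\in V\times\Psi$ be the solution of the adjoint system: find $\tilde\sigma\in V$ and $\tilde u\in\Psi$ such that $a^{\nabla}(\tilde\sigma,v)+a^{0}(v,\tilde u)=0$ for all $v\in V$ and $a^{\Delta}(\tilde u,\psi)=G(\psi)$ for all $\psi\in\Psi$, where the data $G(\cdot)$ is chosen so that $G(u-u_h)=|u-u_h|_{1,\Omega}^2$ (for instance $G(\psi)=\int_\Omega \nabla(u-u_h)\cdot\nabla\psi\,d\Omega$). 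Since $\Omega$ is convex, the additional regularity result of Lemma~\ref{Lema_RegAd} (applied to the dual problem) gives $\|\tilde u\|_{3,\Omega}+\|\tilde\sigma\|_{1,\Omega}\lesssim \|u-u_h\|_{1,\Omega}$, which is the elliptic-regularity input that makes the duality work.

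Next I would write $|u-u_h|_{1,\Omega}^2=G(u-u_h)=a^{\Delta}(\tilde u,u-u_h)$ and insert the interpolant $\tilde u_I\in\Psi_h$ from Proposition~\ref{InterpVh}. Adding and subtracting the discrete bilinear forms, one splits the expression into a Galerkin-orthogonality/consistency piece and an interpolation piece. The plan is to exploit the error equation satisfied by $u-u_h$ (obtained by subtracting Problem~\ref{ContProbH2_h} from Problem~\ref{ContProbH2}), together with the consistency and stability properties of Propositions~\ref{prop:consistency} and~\ref{prop:stability}, to rewrite the products in terms of the already-controlled $H^2$ error $|u-u_h|_{2,\Omega}\lesssim h$ and of the dual interpolation error $\|\tilde u-\tilde u_I\|_{2,\Omega}\lesssim h\|\tilde u\|_{3,\Omega}$. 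Each term will carry one factor bounded by $h$ from the primal estimate and one factor of order $h$ from the dual interpolation, yielding $h^2$ after the regularity bound $\|\tilde u\|_{3,\Omega}\lesssim|u-u_h|_{1,\Omega}$ is divided out.

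I would handle the contribution of the coupling term $a^{0}$ separately, since the dual problem couples $\tilde u$ with $\tilde\sigma$ and the primal error in $\sigma$ enters through $a^{0}(\Upsilon_P^0\sigma_h-\sigma,\cdot)$. Here the already-established estimate \eqref{addreg_H1_sigma}, namely $\|\sigma-\sigma_h\|_{0,\Omega}\lesssim h^2$, is decisive: it provides an $h^2$ bound directly, so this term does not even require the duality gain. The various variational-crime terms (replacing $a_{P,h}$ by $a_P$ on polynomial projections) are treated exactly as in the proof of $|u-u_h|_{2,\Omega}\lesssim h$, using polynomial approximation from Proposition~\ref{InterpPoly} and the consistency identities.

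The main obstacle I anticipate is bookkeeping the dual problem correctly: because the continuous system \eqref{VarForContH2H1} is a coupled (non-symmetric in presentation) saddle-type formulation rather than a single coercive problem, one must set up the adjoint pair $(\tilde\sigma,\tilde u)$ so that the regularity lemma genuinely applies and so that the consistency errors of \emph{all three} bilinear forms $a^{\Delta},a^{\nabla},a^{0}$ are matched against a factor of order $h$. Getting the orthogonality relations and the sign conventions right, and verifying that the dual data $G$ lands in a space for which Lemma~\ref{Lema_RegAd} yields $H^3$ regularity, is where the care is needed; once the dual regularity $\|\tilde u\|_{3,\Omega}\lesssim|u-u_h|_{1,\Omega}$ is in hand, the remaining estimates are routine applications of Cauchy--Schwarz together with the interpolation and stability bounds collected above.
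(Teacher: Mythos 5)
Your proposal is correct and follows essentially the same route as the paper: an Aubin--Nitsche duality argument built on the biharmonic dual problem $a^{\Delta}(\xi,\varphi)=a^{\nabla}(u-u_h,\varphi)$ for all $\varphi\in\Psi$, convex-domain $H^3$ regularity of the dual solution, interpolation of $\xi$ via Proposition~\ref{InterpVh}, and a splitting into an interpolation term plus consistency terms for $a^{\Delta}$ and $a^{0}$, the latter absorbed exactly as you say by the estimate $\|\sigma-\sigma_h\|_{0,\Omega}\lesssim h^2$ of \eqref{addreg_H1_sigma}. The only cosmetic differences are that the paper's dual problem involves $\xi$ alone --- your auxiliary adjoint variable $\tilde\sigma$ is never actually used, since the system is triangular and the $\sigma$-error enters only through the consistency term --- and that the required dual regularity is the standard biharmonic bound \eqref{Bound_Bih_dual} for $H^{-1}$ data on convex domains, not Lemma~\ref{Lema_RegAd} (which concerns the sixth-order problem with $L^2$ data), a point you correctly flagged as needing verification.
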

\begin{proof}
Let us consider the following well posed fourth-order problem: find $\xi \in \Psi$ such that
\begin{equation}
    a^{\Delta}(\xi,\varphi)=a^{\nabla}(u-u_h,\varphi)\quad \forall \varphi \in \Psi.\label{VarFor_Bih_dual}
\end{equation}
It is well known from regularity results for biharmonic equation with right hand side in $H^{-1}(\Omega)$ on convex domain that the following estimate holds
\begin{equation}
||\xi||_{3,\Omega}\lesssim |u-u_h|_{1,\Omega}.
\label{Bound_Bih_dual}
\end{equation}
Then, from Proposition~\ref{InterpVh} and \eqref{Bound_Bih_dual} (twice) we have that there exists $\xi_I\in \Psi_h$ such that 
\begin{equation}
||\xi-\xi_I ||_{2,\Omega}\lesssim h ||\xi ||_{3, \Omega}    
\lesssim h |u-u_h|_{1,\Omega}\label{bound1_dual_Bih}.
\end{equation}
Now, by testing the variational formulation \eqref{VarFor_Bih_dual} with $\varphi=u-u_h\in \Psi$, adding and subtracting the term $\xi_I$, and using Problems~\ref{ContProbH2} and \ref{ContProbH2_h} we obtain 
\begin{align}
|u-u_h|_{1,\Omega}^2
&\lesssim a^{\Delta}(\xi,u-u_h) = a^{\Delta}(\xi-\xi_I,u-u_h) + a^{\Delta}(\xi_I,u-u_h) \nonumber\\
&=a^{\Delta}(\xi-\xi_I,u-u_h) + a^{\Delta}(u,\xi_I) - a^{\Delta}(u_h,\xi_I) \nonumber\\
&=a^{\Delta}(\xi-\xi_I,u-u_h) - a^{0}(\sigma,\xi_I) - a^{\Delta}(u_h,\xi_I) \nonumber\\
&=\underbrace{a^{\Delta}(\xi-\xi_I,u-u_h)}_{T_1} +\underbrace{\{a_h^0(\sigma_h,\xi_I) - a^0(\sigma,\xi_I) \}}_{T_2} 
+\underbrace{\{a_{h}^{\Delta}(u_h,\xi_I)- a^{\Delta}(u_h,\xi_I) \}}_{T_3}. \label{Sn1_T1T2T3}
\end{align}
Now, we will bound the three terms on the right hand side of \eqref{Sn1_T1T2T3}. Indeed, for the first term we apply \eqref{Bound_aDCont}, \eqref{bound1_dual_Bih} and \eqref{sn2_uuh} to get
\begin{equation}
    T_1\leq |\xi - \xi_I|_{2,\Omega}|u-u_h|_{2,\Omega}
    \leq ||\xi - \xi_I||_{2,\Omega}|u-u_h|_{2,\Omega}
    \lesssim h^2|u-u_h|_{1,\Omega}.\label{boundT1_sn1}
\end{equation}
To bound $T_2$ we follow the same steps as those applied to obtain \eqref{bound3_sn2} and we can deduce that
\begin{equation}
T_2\lesssim h^2|u-u_h|_{1,\Omega}.\label{boundT2_sn1}
\end{equation}
Next, to bound the term $T_3$ we use Lemma~\ref{Lema_RegAd},  \eqref{Bound_Bih_dual}, and Proposition~\ref{InterpPoly} to chose $u_{\pi},\xi_{\pi}\in \mathbb{P}_2(P)$ such that 
\begin{equation}
    |u-u_{\pi}|_{2,P}\leq h|u|_{3,P}
    \quad \mbox{and} \quad 
    |\xi-\xi_{\pi}|_{2,P}\leq h|\xi|_{3,P}.
    \label{u_pi_xi_pi}
\end{equation}
Then, we add and subtract the terms $u_{\pi}$ and $\xi_{\pi}$, and use the consistency property of the bilinear form $a_{h,P}^{\Delta}(\cdot,\cdot)$ (cf. Proposition~\ref{prop:consistency}) to have
\begin{align}
T_3
&=\sum\limits_{P\in \Omega_h}\{  a_{P,h}^{\Delta}(u_h,\xi_I)- a_{P}^{\Delta}(u_h,\xi_I)\}
=\sum\limits_{P\in \Omega_h} \{ a_{P,h}^{\Delta}(u_h-u_{\pi},\xi_I-\xi_{\pi})+ a_{P}^{\Delta}(u_{\pi}-u_h,\xi_I-\xi_{\pi})\}.\nonumber
\end{align}
Next, in the equality above we apply \eqref{Bound_aDCont}, \eqref{Bound_aD_h}, add and subtract the terms $u$ and $\xi$, apply \eqref{u_pi_xi_pi}, \eqref{sn2_uuh} and \eqref{bound1_dual_Bih} to deduce
\begin{equation}
T_3\lesssim h^2 |u-u_h|_{1,\Omega}. \label{boundT3_sn1}  
\end{equation}
Therefore, from \eqref{Sn1_T1T2T3}, \eqref{boundT1_sn1}, \eqref{boundT2_sn1} and \eqref{boundT3_sn1} we have proved the result.
\end{proof}

\begin{theorem}
Let $(\sigma,u)\in V\times \Psi$ and $(\sigma_h,u_h)\in V_h\times \Psi_h$ be the unique solutions of Problems~\ref{ContProbH1}-\ref{ContProbH2} and \ref{ContProbH1_h}-\ref{ContProbH2_h}, respectively, then the following result holds
\begin{equation*}
||u - u_h ||_{0,\Omega}\lesssim h^2.
\label{sn0_uuh}
\end{equation*}
\end{theorem}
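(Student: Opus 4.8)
The plan is to mimic the duality argument of Theorem~\ref{Theo_SN1_u}, replacing the $H^1$ dual problem by its $L^2$ counterpart. First I would introduce the adjoint problem: find $\xi\in\Psi$ such that
$$a^{\Delta}(\xi,\varphi)=a^{0}(u-u_h,\varphi)\qquad\forall\varphi\in\Psi,$$
which is well posed by the boundedness \eqref{Bound_aDCont} and ellipticity \eqref{ellip_aDCont} of $a^{\Delta}(\cdot,\cdot)$. Since the datum satisfies $u-u_h\in L^2(\Omega)\hookrightarrow H^{-1}(\Omega)$, the biharmonic regularity on the convex domain already used in \eqref{Bound_Bih_dual} now yields $\|\xi\|_{3,\Omega}\lesssim\|u-u_h\|_{0,\Omega}$. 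Proposition~\ref{InterpVh} then provides $\xi_I\in\Psi_h$ with $\|\xi-\xi_I\|_{2,\Omega}\lesssim h\,\|\xi\|_{3,\Omega}\lesssim h\,\|u-u_h\|_{0,\Omega}$.

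Next I would test the adjoint problem with $\varphi=u-u_h$, so that $\|u-u_h\|_{0,\Omega}^2=a^{\Delta}(\xi,u-u_h)$, add and subtract $\xi_I$, and exploit the symmetry of $a^{\Delta}(\cdot,\cdot)$ together with Problems~\ref{ContProbH2} and~\ref{ContProbH2_h} (the latter being admissible because $\xi_I\in\Psi_h$). This reproduces the same splitting as in \eqref{Sn1_T1T2T3}, namely
$$\|u-u_h\|_{0,\Omega}^2=\underbrace{a^{\Delta}(\xi-\xi_I,u-u_h)}_{T_1}+\underbrace{\{a_h^{0}(\sigma_h,\xi_I)-a^{0}(\sigma,\xi_I)\}}_{T_2}+\underbrace{\{a_h^{\Delta}(u_h,\xi_I)-a^{\Delta}(u_h,\xi_I)\}}_{T_3}.$$

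The three terms are then estimated exactly as in Theorem~\ref{Theo_SN1_u}, with $|u-u_h|_{1,\Omega}$ replaced throughout by $\|u-u_h\|_{0,\Omega}$. For $T_1$ I would use \eqref{Bound_aDCont}, the interpolation bound on $\xi_I$, and the already established $H^2$ rate \eqref{sn2_uuh} to get $T_1\lesssim\|\xi-\xi_I\|_{2,\Omega}\,|u-u_h|_{2,\Omega}\lesssim h\,\|u-u_h\|_{0,\Omega}\cdot h$. For $T_2$ I would repeat the consistency/projection manipulation that led to \eqref{bound3_sn2}: using the definitions of $\Upsilon_P^0,\Pi_P^0$ and $a_h^{0}(\sigma_h,\xi_I)=\sum_P a_P^{0}(\Upsilon_P^0\sigma_h,\Pi_P^0\xi_I)$, the difference collapses onto local terms controlled by $\|\Upsilon_P^0\sigma_h-\sigma\|_{0,P}$ paired against a first-order approximation error of $\xi_I$; invoking the $L^2$ estimate for $\sigma$ in \eqref{addreg_H1_sigma} and $\|\xi_I\|_{2,\Omega}\lesssim\|\xi\|_{3,\Omega}\lesssim\|u-u_h\|_{0,\Omega}$ then yields $T_2\lesssim h^2\,\|u-u_h\|_{0,\Omega}$. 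For $T_3$ I would follow the computation leading to \eqref{boundT3_sn1}: choose $u_{\pi},\xi_{\pi}\in\mathbb{P}_2(P)$ via Proposition~\ref{InterpPoly}, invoke the consistency of $a_{P,h}^{\Delta}(\cdot,\cdot)$ (Proposition~\ref{prop:consistency}) and the boundedness \eqref{Bound_aDCont} and \eqref{Bound_aD_h}, and combine \eqref{sn2_uuh} with the interpolation bound on $\xi_I$ to obtain $T_3\lesssim h^2\,\|u-u_h\|_{0,\Omega}$. Summing the three estimates gives $\|u-u_h\|_{0,\Omega}^2\lesssim h^2\,\|u-u_h\|_{0,\Omega}$, and dividing yields the claim.

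I expect the delicate point to be the bound on $T_2$. One must verify that the $L^2$ datum of the dual problem genuinely delivers $\|\xi\|_{3,\Omega}\lesssim\|u-u_h\|_{0,\Omega}$, and, more importantly, that the projection bookkeeping for $a_h^{0}$ extracts a true $O(h^2)$ factor rather than only $O(h)$. This hinges on pairing the first-order $L^2$-projection error of $\sigma$ against the first-order approximation error of $\xi_I$ (each contributing one power of $h$), while the superconvergent bound $\|\sigma-\sigma_h\|_{0,\Omega}\lesssim h^2$ from \eqref{addreg_H1_sigma} controls the remaining discrete contribution; keeping these two sources of accuracy separate is what makes the term genuinely second order.
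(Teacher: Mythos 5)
Your argument is correct, but it takes a much heavier route than the paper. The paper's own proof is two lines: since $u,u_h\in\Psi\subset V=H_0^1(\Omega)$, one simply writes $\|u-u_h\|_{0,\Omega}\leq\|u-u_h\|_{1,\Omega}\lesssim|u-u_h|_{1,\Omega}\lesssim h^2$, using the Poincar\'e inequality and the already established $H^1$-seminorm estimate of Theorem~\ref{Theo_SN1_u}; no new dual problem is needed because the $H^1$ rate is already $O(h^2)$, the best the method can deliver. Your duality argument with $L^2$ data is sound as written --- the dual problem is well posed by Lax--Milgram, $u-u_h\in L^2(\Omega)\hookrightarrow H^{-1}(\Omega)$ gives $\|\xi\|_{3,\Omega}\lesssim\|u-u_h\|_{0,\Omega}$, and the splitting into $T_1,T_2,T_3$ and their bounds mirror \eqref{Sn1_T1T2T3}--\eqref{boundT3_sn1} correctly --- but it buys nothing here: the $L^2$ datum would in principle allow $H^4$ dual regularity, yet Proposition~\ref{InterpVh} only exploits $\varepsilon\in[2,3]$, so $\|\xi-\xi_I\|_{2,\Omega}$ gains a single power of $h$, and paired with $|u-u_h|_{2,\Omega}=O(h)$ from \eqref{sn2_uuh} the term $T_1$ caps the whole estimate at $O(h^2)\|u-u_h\|_{0,\Omega}$, exactly the rate the trivial embedding already provides. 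In short: your proof is a valid, self-contained reproduction of the Theorem~\ref{Theo_SN1_u} machinery at the $L^2$ level, and your attention to the bookkeeping in $T_2$ (separating the first-order projection error of $\xi_I$ from the superconvergent $\|\sigma-\sigma_h\|_{0,\Omega}\lesssim h^2$ of \eqref{addreg_H1_sigma}) is exactly right, but the economical observation you missed is that once $|u-u_h|_{1,\Omega}\lesssim h^2$ is in hand, the $L^2$ bound is an immediate corollary.
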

\begin{proof}
Since $u,u_h\in \Psi \subset V$ we have from the definition of $||\cdot ||_{1,\Omega}$, Poincar\'e inequality (see for instance \cite{ciarlet}) and Theorem~\ref{Theo_SN1_u} the following estimates
\begin{equation*}
||u-u_h||_{0,\Omega}
\leq 
||u-u_h||_{1,\Omega} \lesssim |u-u_h|_{1,\Omega} \lesssim h^2,  
\end{equation*}
which proves the theorem.
\end{proof}

\section{Numerical examples}
\label{SEC:NumericalResults}

In this section we are going to give the numerical evidence about the result obtained in the previous sections. 
To achieve this goal we verify that the following error indicators:
\begin{itemize}
\item $H^2$-seminorm error on $u_h$
$$
\sqrt{\sum_{P\in \Omega_h}|\Pi_{\E}^{\Delta} u_h-u|_{2,P}^2}\approx h^2\,,
$$
\item $H^1$-seminorm error on $u_h$
$$
\sqrt{\sum_{P\in \Omega_h}|\Pi_{\E}^{\nabla} u_h-u|_{1,P}^2}\approx h^1\,,
$$
\item $L^2$ norm error on $u_h$
$$
\sqrt{\sum_{P\in \Omega_h}\|\Pi_{\E}^0 u_h-u\|_{2,P}^2}\approx h^2\,,
$$
\item $H^1$-seminorm error on  $\sigma_h$
$$
\sqrt{\sum_{P\in \Omega_h}|\Pi_{\E}^{\nabla} \sigma_h-\sigma|_{1,P}^2}\approx h^1\,,
$$
\item $L^2$ norm error on $\sigma_h$
$$
\sqrt{\sum_{P\in \Omega_h}\|\Pi_{\E}^0 \sigma_h-\sigma\|_{2,P}^2}\approx h^2\,,
$$
\end{itemize}
such error indicators consider both the problem variable $u_h$ 
and the auxiliary variable $\sigma_h$.

Notice that, since we are dealing with virtual functions, 
we have to use proper projection operators to compute such quantities.
Moreover, such projection operators do depend on the error we are computing.
For instance, to compute the error in the $H^2$-seminorm, 
we have to take the $H^2$ projection operator $\Pi_{\E}^\Delta$~Equation~\eqref{eqn:deltaProjDef}.
However, 
to compute the $L^2$ norm error we may use $\Pi_{\E}^\Delta$,
but for the VEM approximation degree we are considering 
the projection $\Pi_{\E}^\Delta$ coincides with $\Pi_{\E}^0$, see Remark~\ref{rem:coinPiDelatPiZero}.

In the subsequent examples we take the unit cube as domain $\Omega$.
We discretize it in four different ways:
\begin{itemize}
    \item \texttt{tetra}: a Delaunay tetrahedral mesh built via \texttt{tetgen}~\cite{si2015tetgen};
    \item \texttt{cube}: a structured mesh composed by cubes;
    \item \texttt{nine}: a structured tesselation of a cube with polyhedrons composed by nine faces;
    \item \texttt{voro}: a Voronoi mesh optimize via a Llyod algorithm~\cite{du1999centroidal} obtained by the library \texttt{voro++}~\cite{rycroft2009voro++}.
\end{itemize}
In Figure~\ref{fig:meshes} we show an example of each of these discretizations.
As you can notice from these pictures the meshes taken into account have an increasing complexity.
Indeed, the first two meshes are standard since they are composed by tetrahedrons and structured cubes.
The mesh \texttt{nine} is composed by a stencil of nine polyhedron with a regular shape repeated to cover the whole domain $\Omega$.
Finally the mesh \texttt{voro} is a Voronoi mesh that presents a non uniform distribution of faces and edges. 
Indeed, a polyhedron may have very small faces adjacent to larger ones and
it can also have very tiny edges near by larger ones.

\begin{figure}[!htb]
\centering
\begin{tabular}{ccc}
 \texttt{tetra} &&\texttt{cube} \\[1em]
\includegraphics[width=0.4\textwidth]{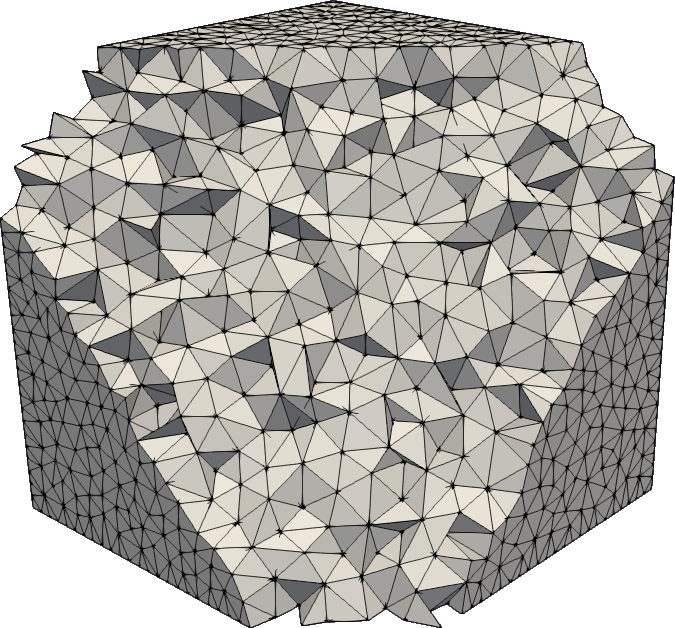} &\phantom{mm}&
\includegraphics[width=0.4\textwidth]{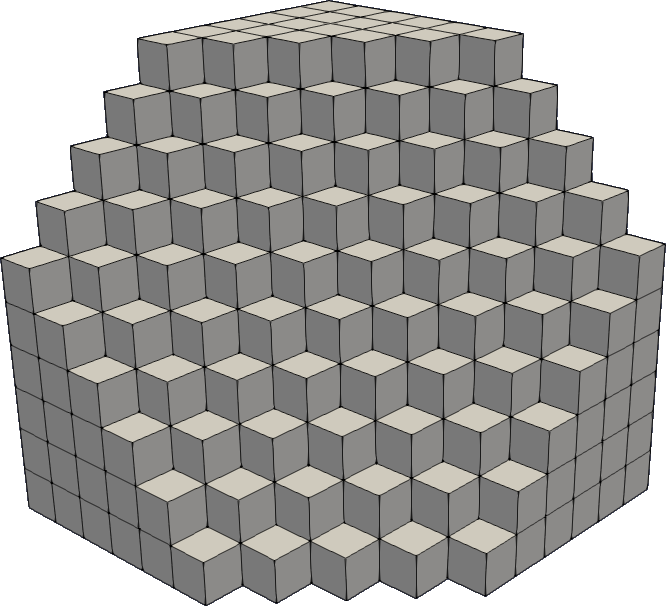} \\[1em]
\texttt{nine} &&\texttt{voro} \\[1em]
\includegraphics[width=0.4\textwidth]{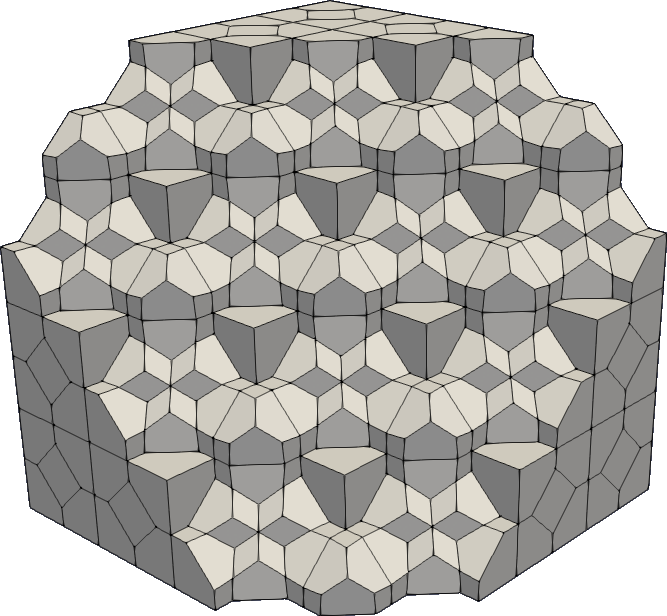} &\phantom{mm}&
\includegraphics[width=0.4\textwidth]{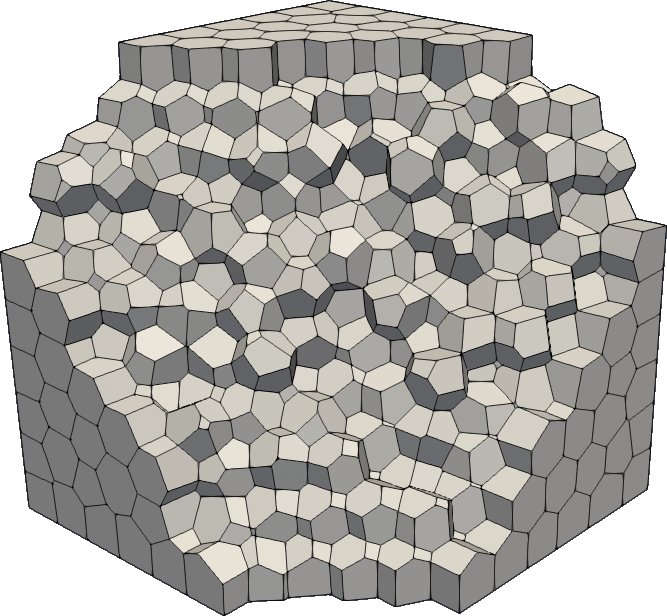} \\
\end{tabular}
\caption{Mesh types used for the numerical experiments.}
\label{fig:meshes}
\end{figure}

We associate with each mesh a mesh size $h$ defined as
$$
h = \frac{1}{N_P}\sum_{P\in \Omega_h} h_P\,,
$$
where $N_P$ is the number of polyhedrons of $\Omega_h$
and $h_P$ is the diameter of the polyhedron $P$.

Then, to compute the convergence lines,
we build a sequence of four meshes decreasing the mesh-size 
for each type of mesh taken into account.


\subsection{Patch test}

It is well-known that the virtual element approach passes the so-called patch test.
Indeed, consider a partial differential equation whose solution is a polynomial of degree $k$.
If the virtual element space we are using contains polynomials of degree $k$,
we recover exact solution up to machine precision.

In this paper the functional spaces introduced for the variable $u$, Equation~\eqref{eqn:fullC1Space},
contains polynomial of degree 2,
while the space of the auxiliary variable $\sigma$, Equation~\eqref{eqn:fullC0Space},
contains  polynomial of degree 1.
As a consequence, the proposed method will be ``excact'' if the solution is a polynomial of degree lower than 2,
while we have to recover the error trend for higher degrees.

To numerically verify this fact,
we consider a sixth-order elliptic problem whose right hand side and boundary conditions are set 
in such a way that the exact solution is the polynomial
\begin{equation}
u(x,\,y,\,z) = (x+y+z)^k\,.
\label{eqn:exactSolUPatch}
\end{equation}
Starting from this definition of $u$, also the auxiliary variable $\sigma$ is a polynomial and 
it has the following form
\begin{equation}
\sigma(x,\,y,\,z) = 
\begin{cases}
-9k(k-1)(k-2)(k-3)(x+y+z)^{k-4} &\text{if }k\geq 4\\
0 &\text{otherwise}\\
\end{cases}
\,.
\label{eqn:exactSolSigmaPatch}
\end{equation}
In both Equation~\eqref{eqn:exactSolUPatch} and~\eqref{eqn:exactSolSigmaPatch} $k$ is an integer parameter
that we can set to get a polynomial of a specific degree.

In this example we will show only the convergence lines for the mesh types $\texttt{tetra}$ and $\texttt{nine}$,
since we got similar results for the other two types of meshes.
In Figure~\ref{fig:exe1PatchU} we show the convergence rates of the $H^2$-seminorm and the $L^2$-norm errors.
In these graphs we observe that we get the solution up to machine precision when $k=1$ and $2$,
then for $k\geq 3$ we have the expected convergence rate.
This fact perfectly matches what we expected.
Indeed, since we have a virtual element space that contains polynomials of degree 2,
we are exact if the solution of the PDE is a polynomial of degree lower or equal to 2. 

\begin{figure}[!htb]
\centering
\begin{tabular}{cc}
\texttt{tetra} &\texttt{nine}\\
\includegraphics[width=0.44\textwidth]{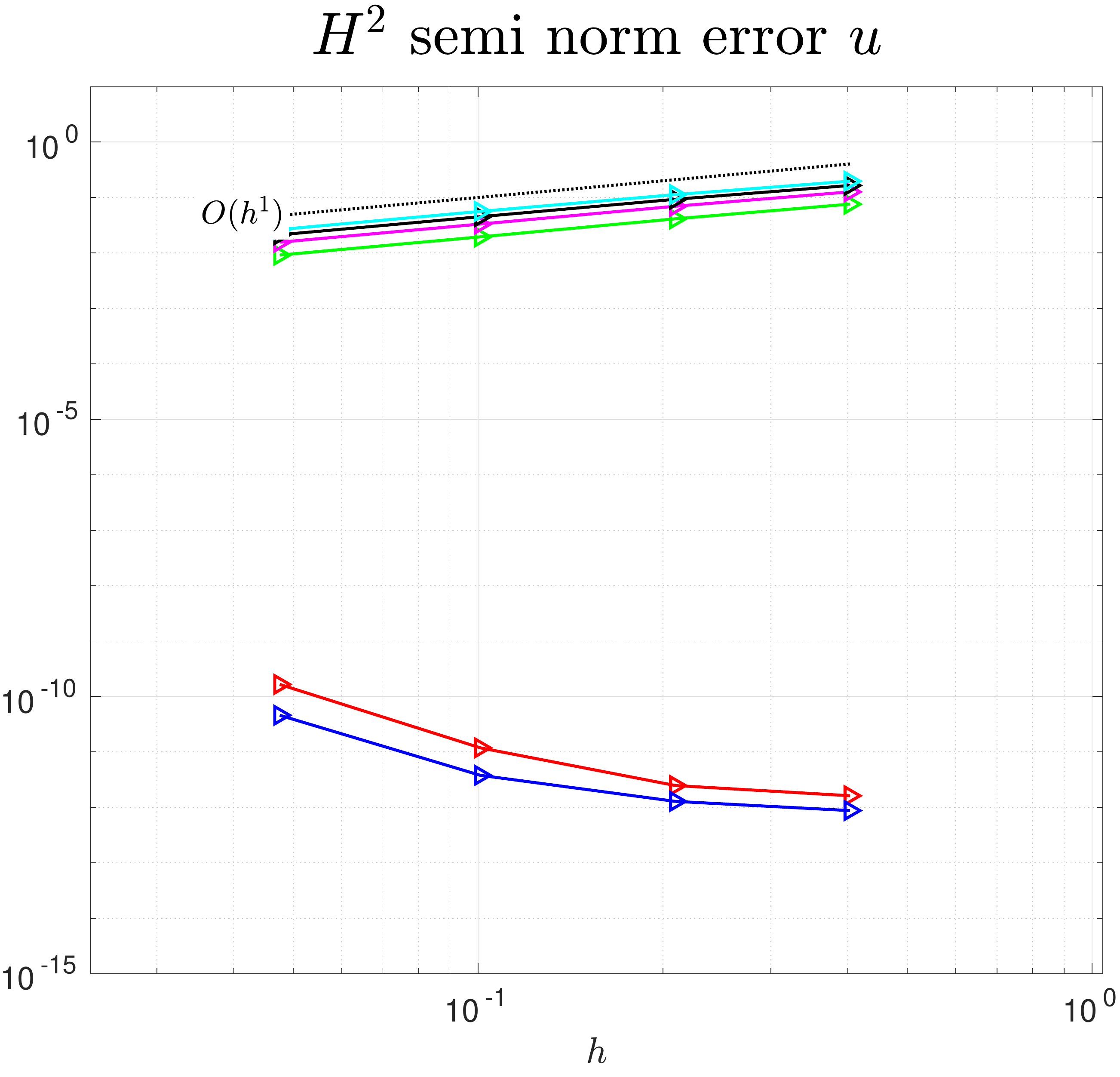}&
\includegraphics[width=0.44\textwidth]{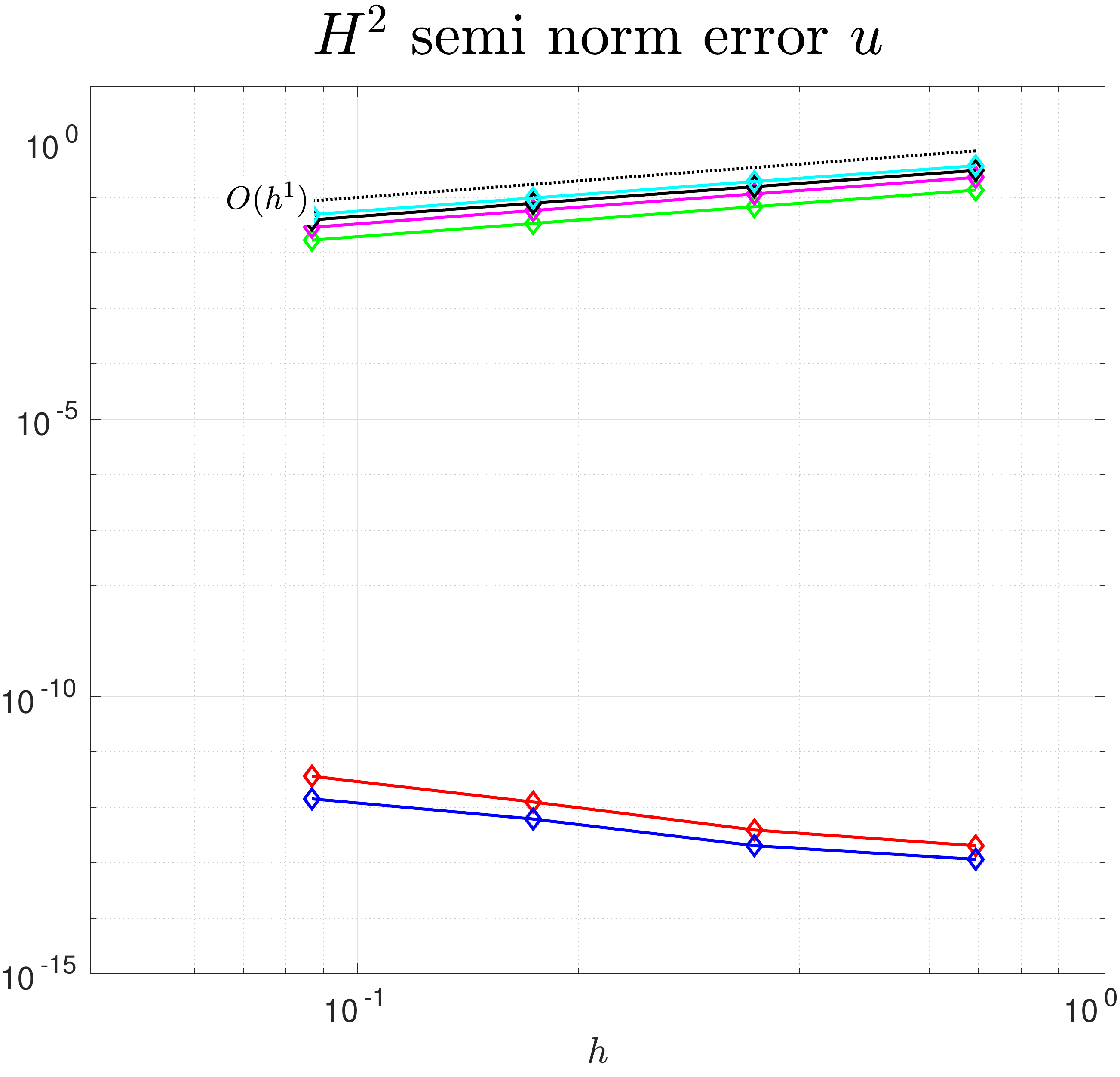}\\
\includegraphics[width=0.44\textwidth]{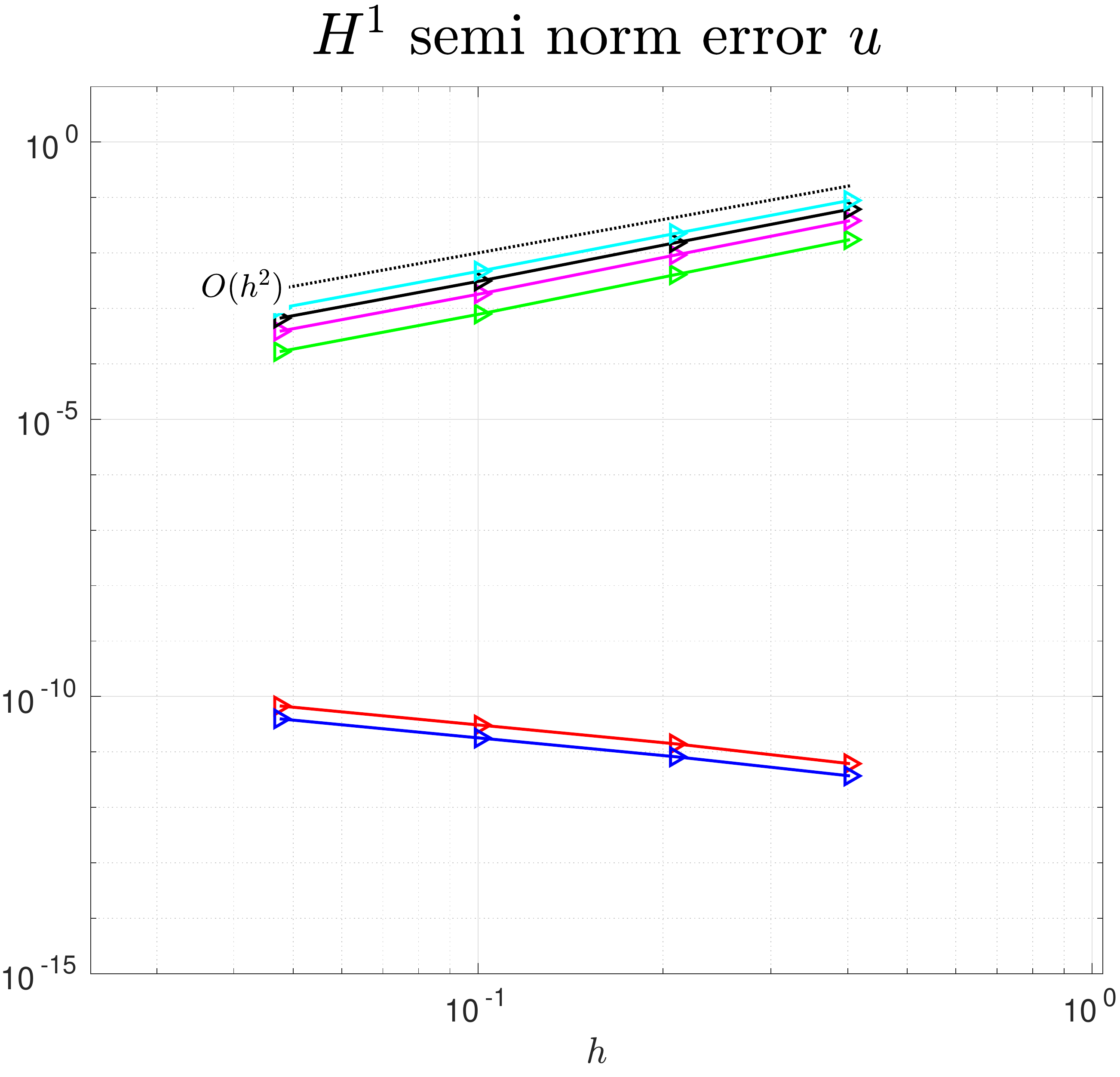}&
\includegraphics[width=0.44\textwidth]{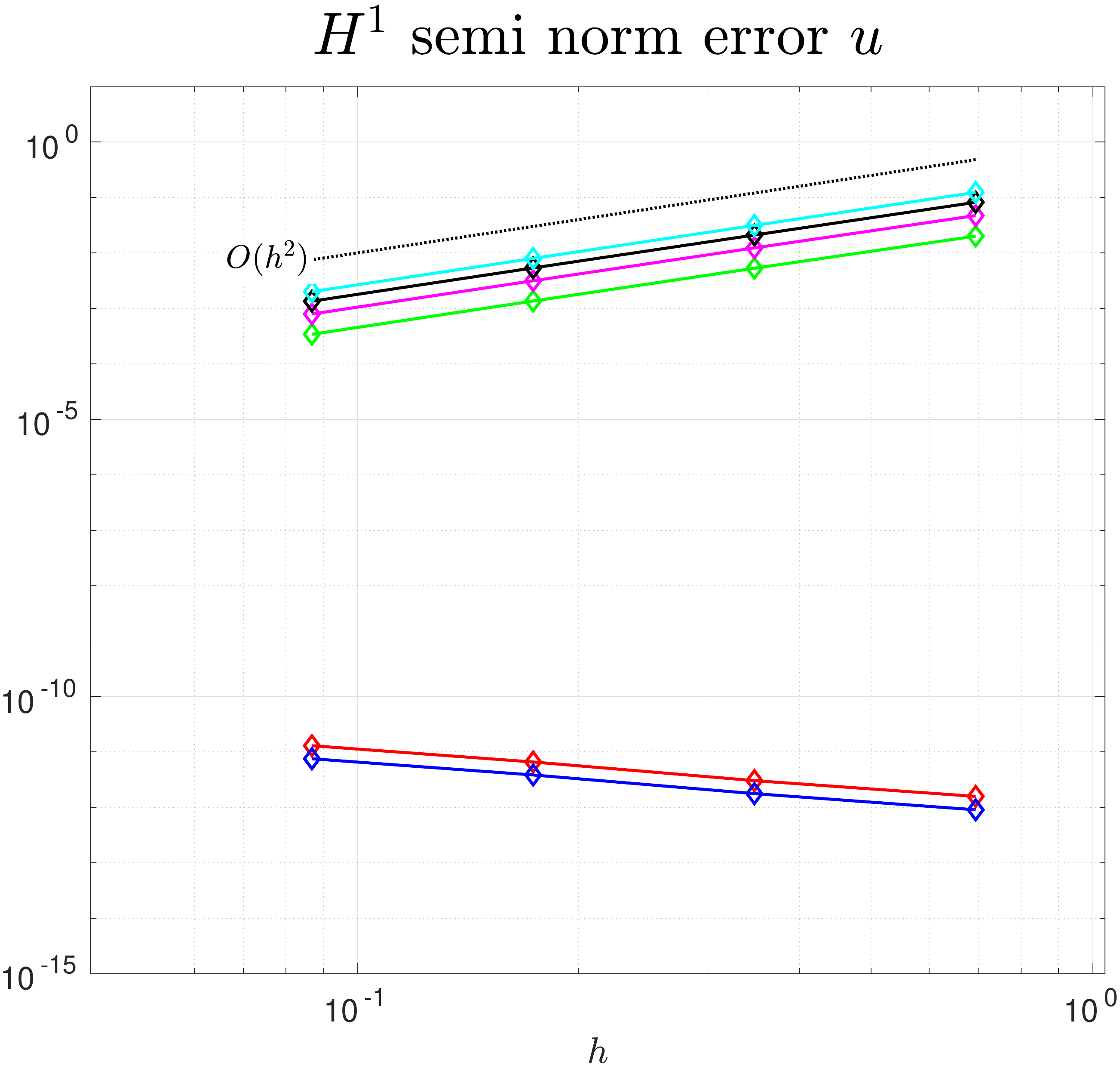}\\
\includegraphics[width=0.44\textwidth]{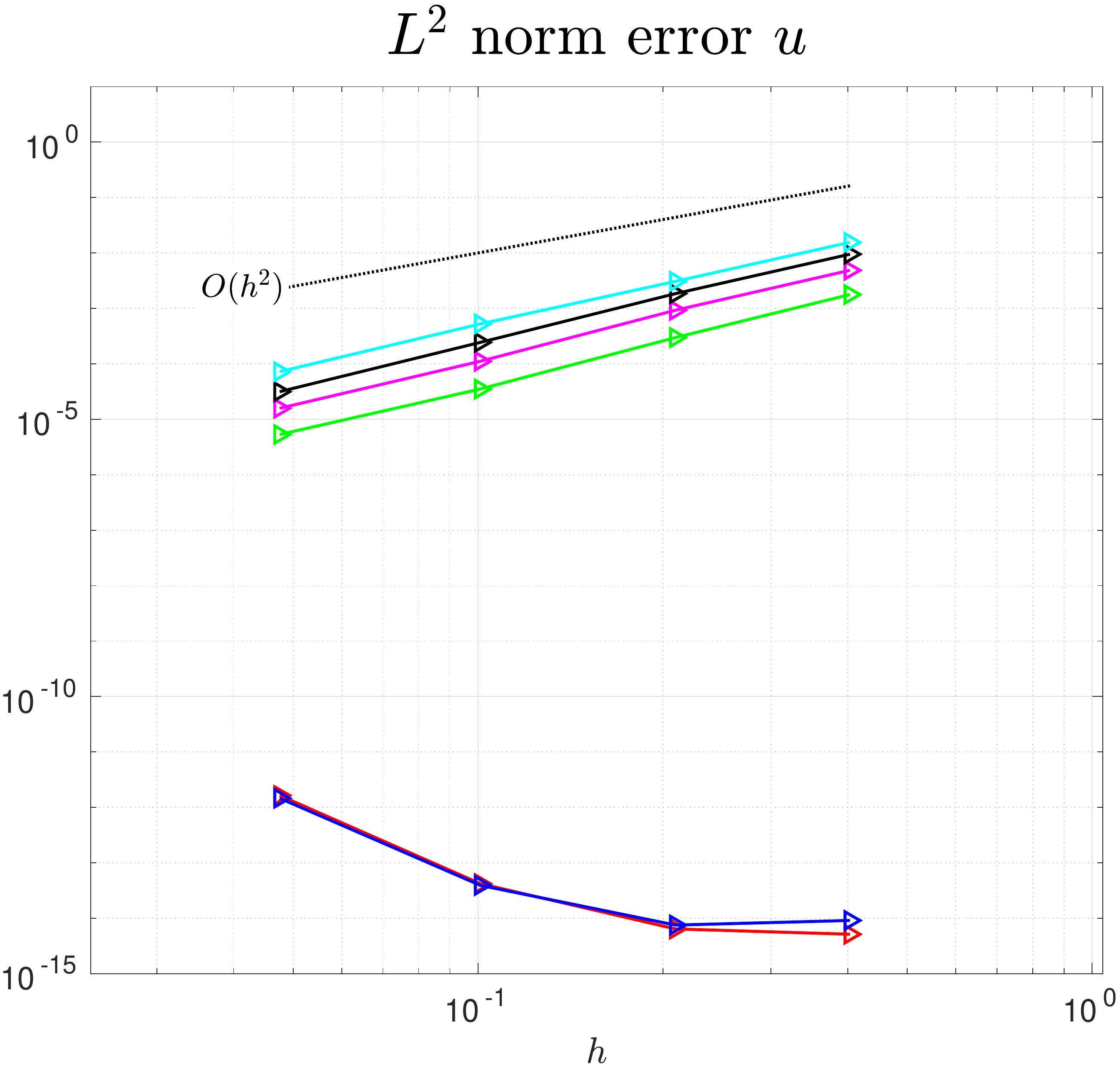}&
\includegraphics[width=0.44\textwidth]{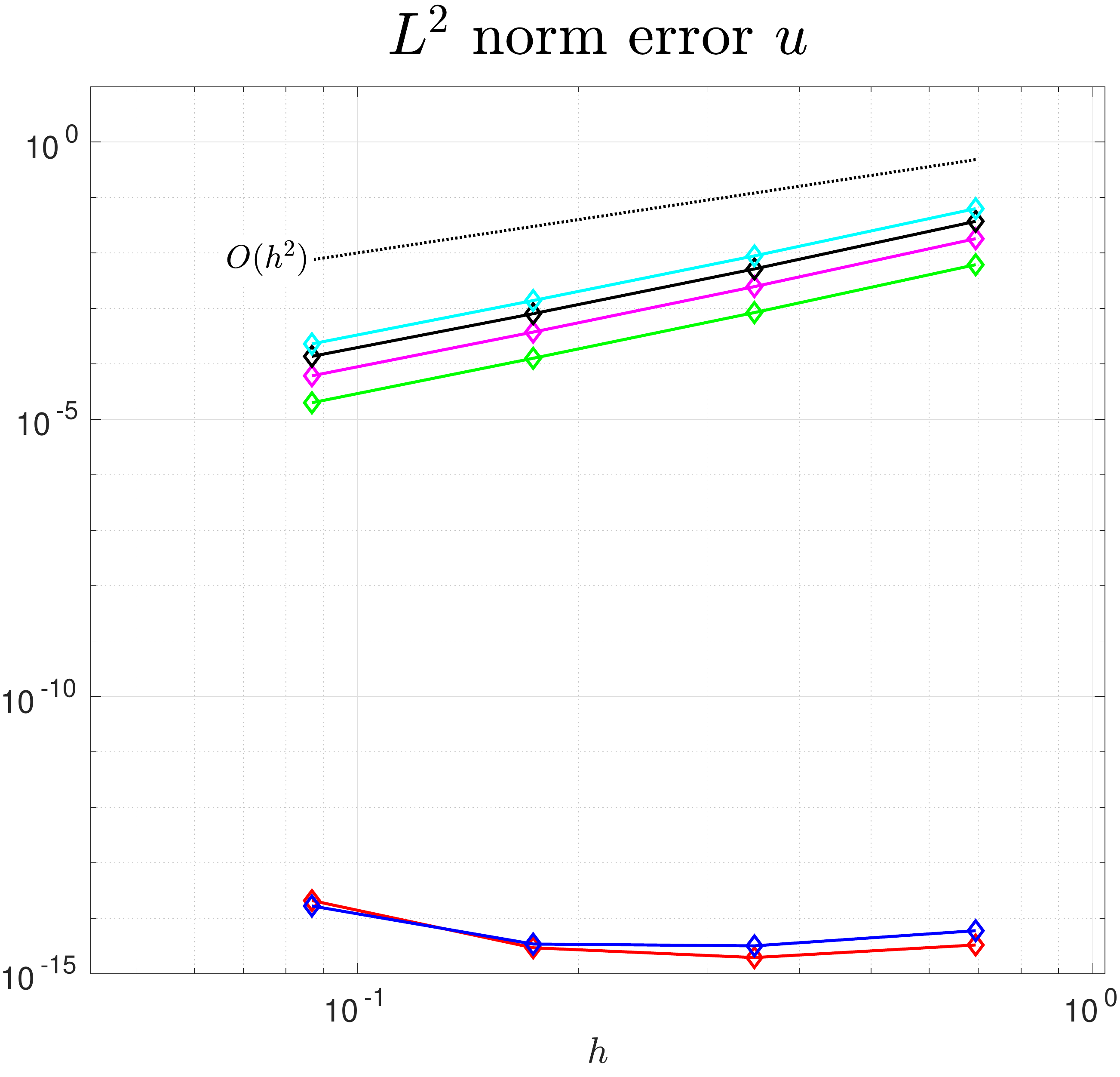}\\
\multicolumn{2}{c}{\includegraphics[width=0.60\textwidth]{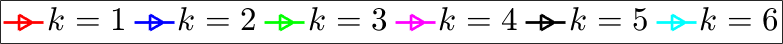}}
\end{tabular}
\caption{Patch test: convergence rate of $u$ for the \texttt{tetra} and \texttt{nine} meshes.}
\label{fig:exe1PatchU}
\end{figure}

Moreover, we observe that the convergence lines associated with $k=1$ and $2$ slowly increase.
This fact is probably due to machine algebra effect. 
Indeed, the bigger the linear system is,
the more truncation errors propagate so 
we get larger errors.

In Figure~\ref{fig:exe1PatchS} we show the behaviour of the error of auxiliary variable.
Here we got the exact solution up to degree 5.
It seems strange since the virtual element space of $\sigma_h$ contains polynomial of degree 1.
However it is definitely correct. 
Indeed, when the exact solution $u$ is a polynomial of degree $k\leq 5$,
its bi-Laplacian is a polynomial of degree lower or equal to 1.
However, if we consider $k=6$,
$\sigma$ is a polynomial of degree $2$,
it is not content in the virtual element gspace so 
we get the expected convergence rates.
Finally, also in this case we observe 
that the convergence lines associated with $k\leq 5$ slowly increase 
due to numerical linear algebra effect.

\begin{figure}[!htb]
\centering
\begin{tabular}{cc}
\texttt{tetra} &\texttt{nine}\\
\includegraphics[width=0.44\textwidth]{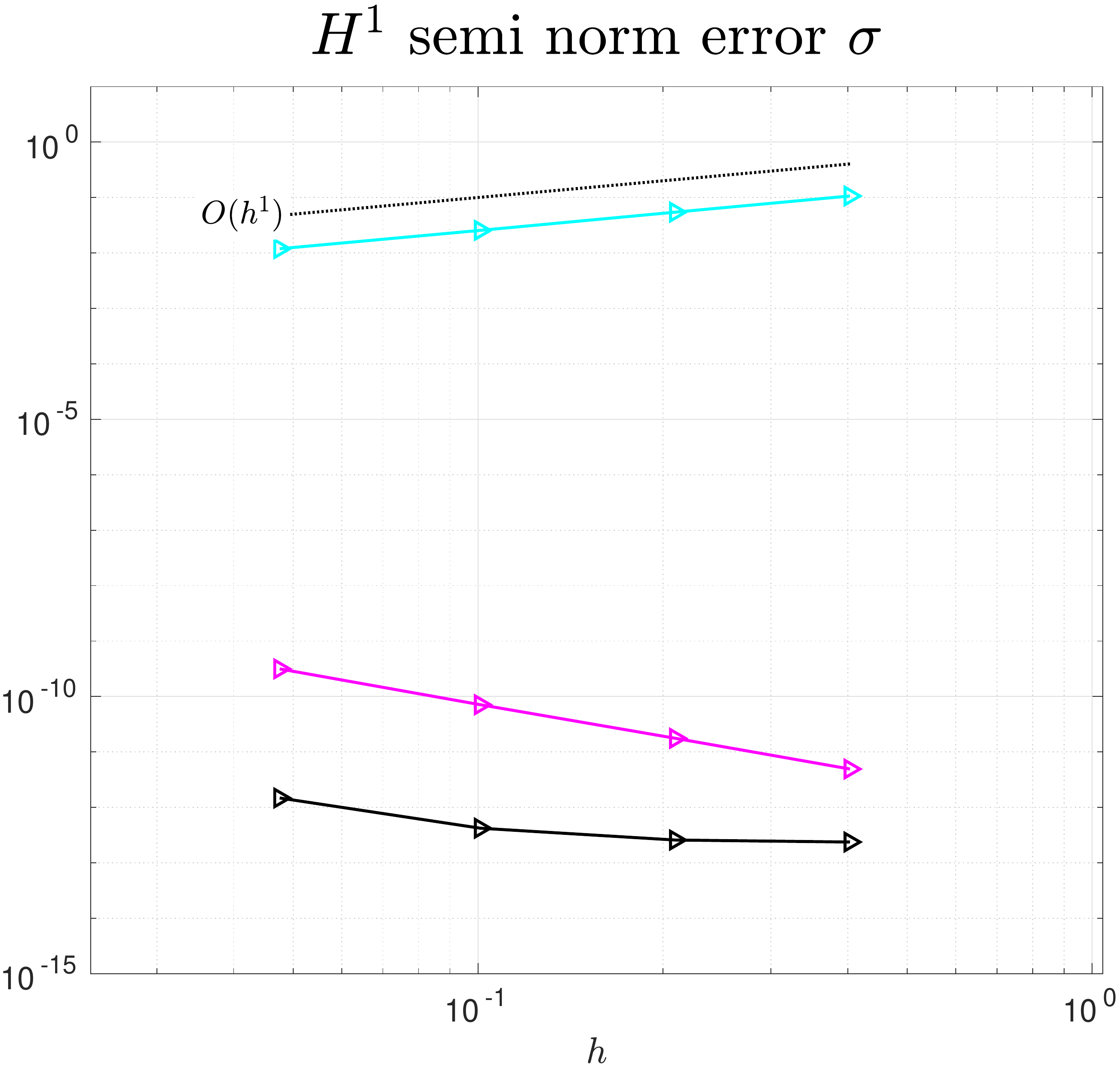}&
\includegraphics[width=0.44\textwidth]{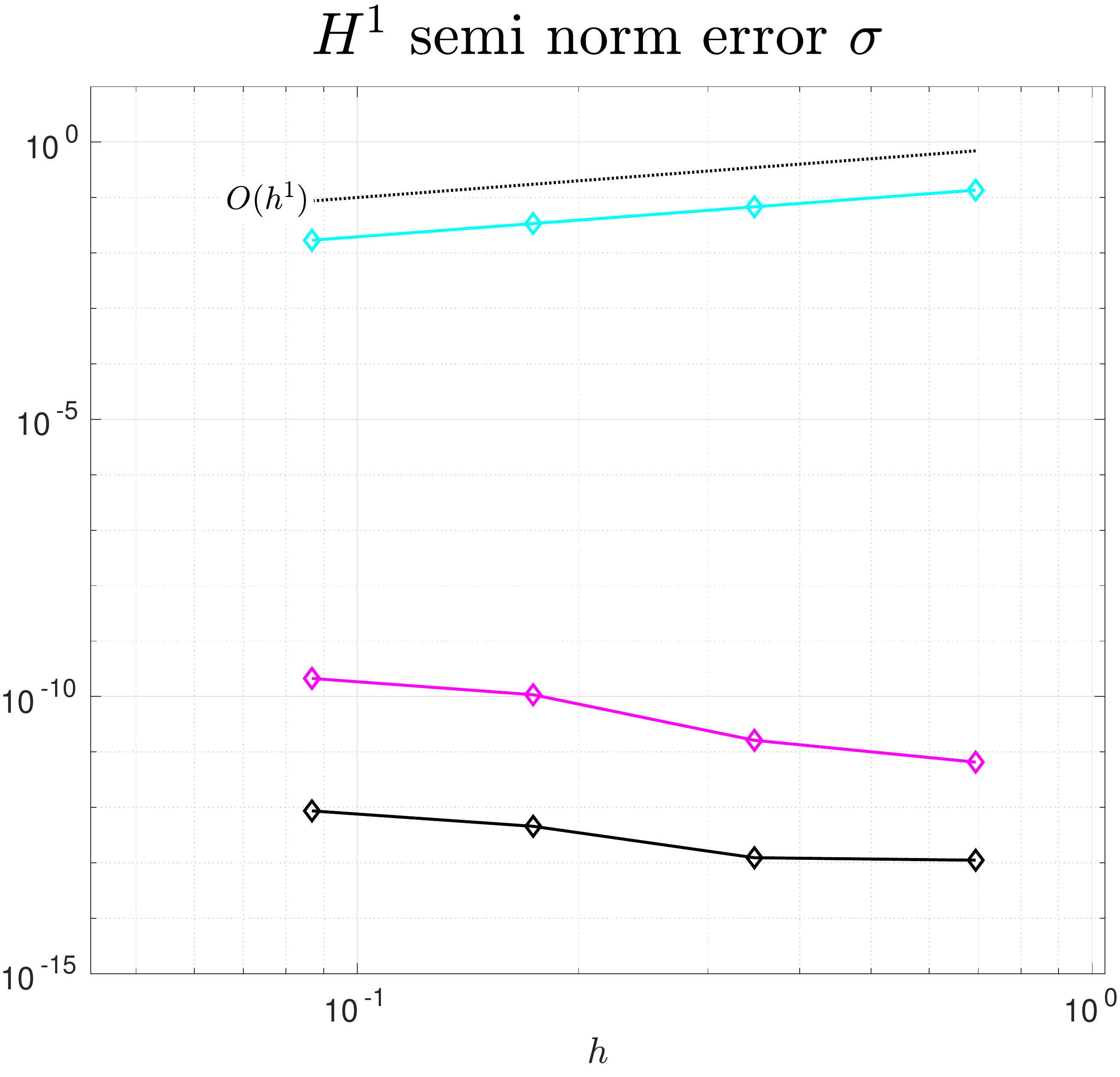}\\
\includegraphics[width=0.44\textwidth]{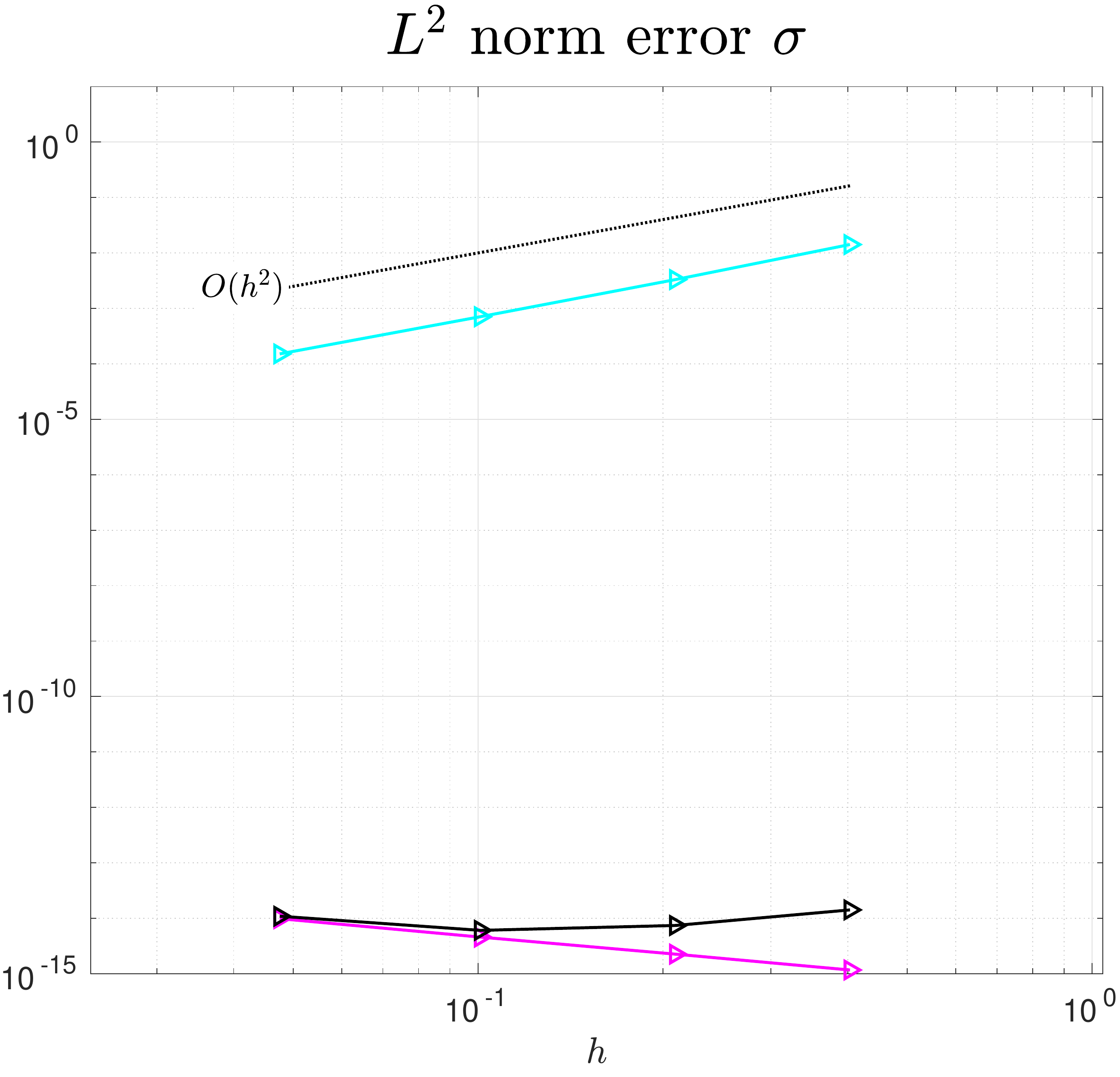}&
\includegraphics[width=0.44\textwidth]{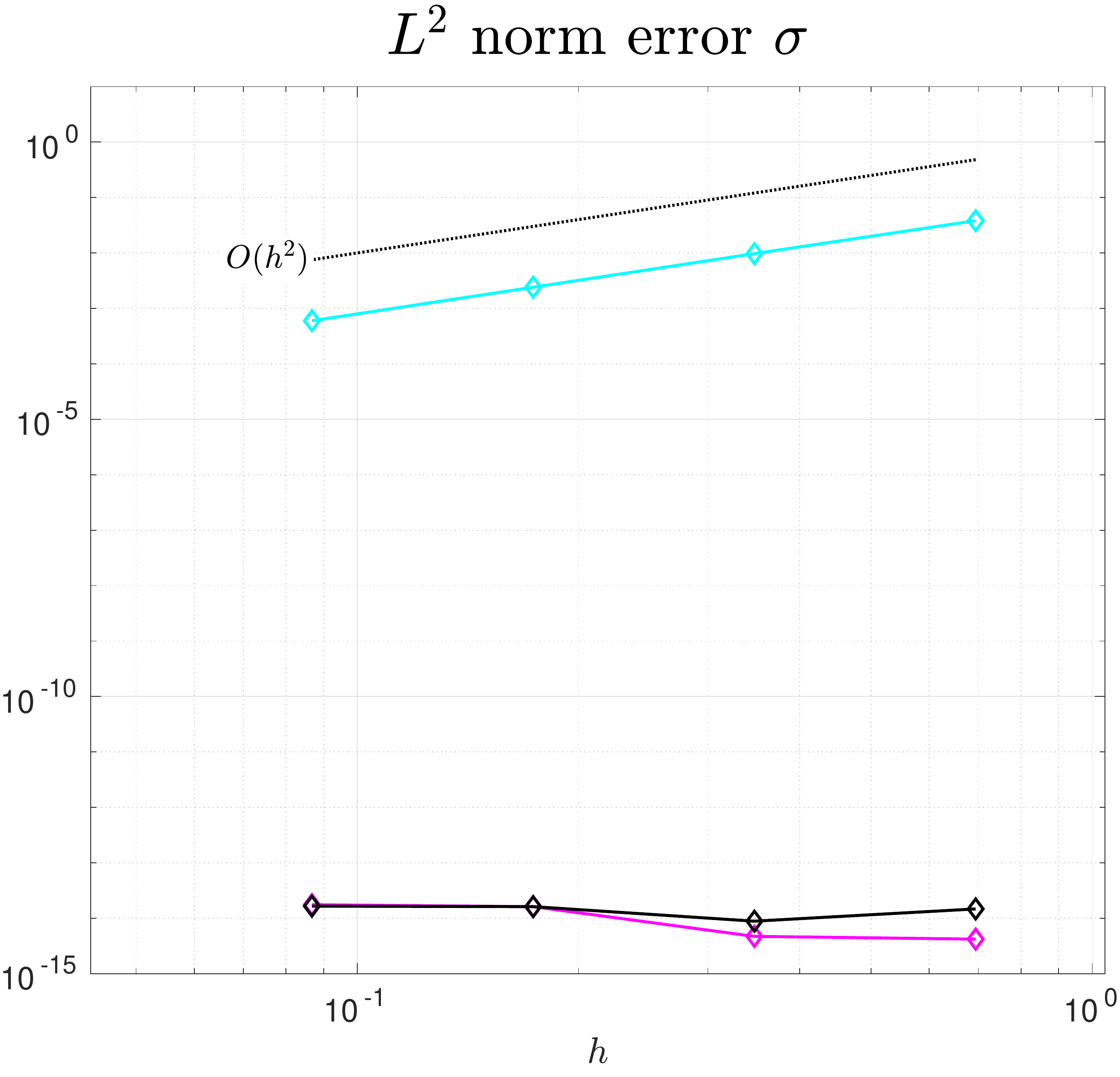}\\
\multicolumn{2}{c}{\includegraphics[width=0.60\textwidth]{legend.png}}
\end{tabular}
\caption{Patch test: convergence rate of $\sigma$ for the \texttt{tetra} and \texttt{nine} meshes.}
\label{fig:exe1PatchS}
\end{figure}

\subsection{Convergence Analysis}

In this subsection we numerically verify 
the error trends predicted by the theory.
To achieve this goal we take the sixth-order elliptic problem and 
we set the right hand side and the boundary condition in such a way 
that the exact solution is
$$
u(x,\,y,\,z) = \sin(\pi x)\,\sin(\pi y)\,\sin(\pi z)\,.
$$
In Figure~\ref{fig:exe2u} we show the convergence lines 
for $H^2$ seminorm and $L^2$ norm errors for each mesh type taken into consideration.
From these data we observe that we get the predicted rate for both the errors.
More specifically we got a linear decay of the $H^2$ seminorm erros and 
a quadratic one in the $L^2$ norm.
Moreover, the convergence lines are really close to each other.
Since for each refinement level each mesh type has a comparable mesh size,
the fact that the error is approximately the same  underlines the robustness of the proposed VEM scheme with respect to elements shape.
Similar considerations can be done for the trend of the $H^1$ seminorm error,
but we do not show such graph for brevity.
\begin{figure}[!htb]
\centering
\begin{tabular}{cc}
\includegraphics[width=0.44\textwidth]{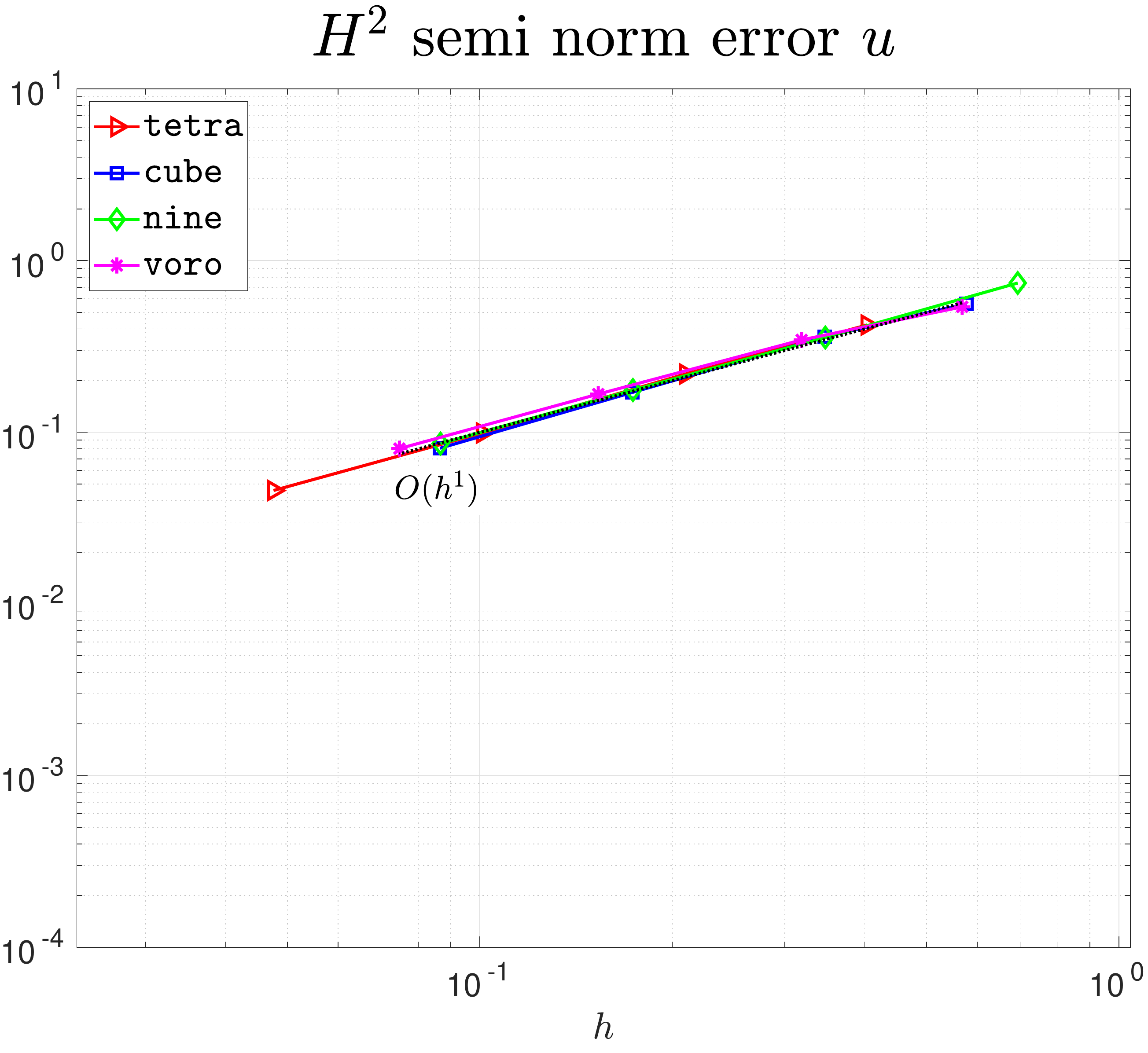}&
\includegraphics[width=0.44\textwidth]{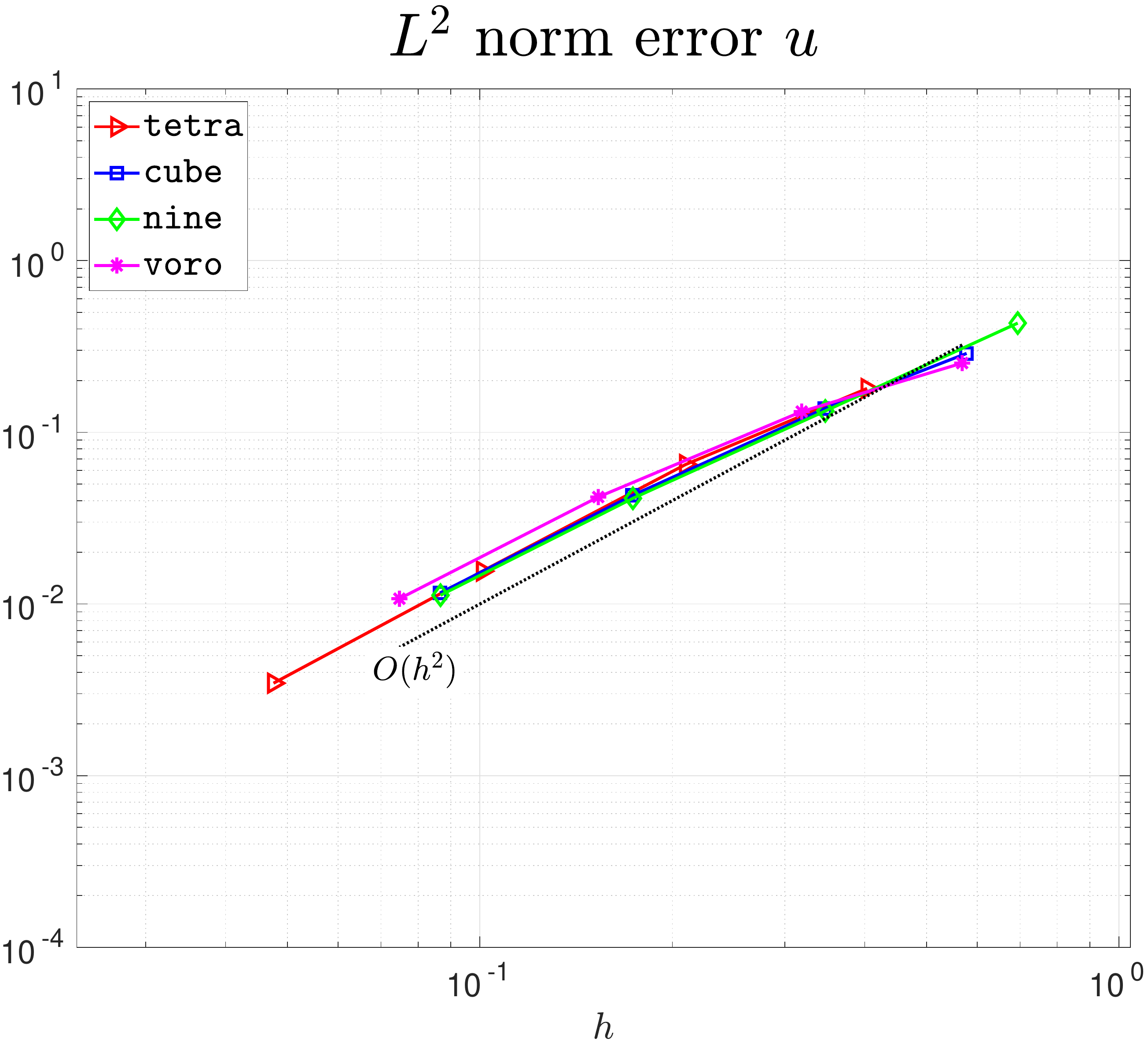}\\
\end{tabular}
\caption{Convergence Analysis: convergence rate of $u$ each mesh type.}
\label{fig:exe2u}
\end{figure}

In Figure~\ref{fig:exe2sigma} we show the error trend of the auxiliary variable $\sigma$.
Also in this case the convergence rate is the expected one:
$O(h)$ and $O(h^2)$ in the $H^1$ seminorm and in the $L^2$ norm, respectively.
Moreover, also in this case all the convergence lines are close one to each other.
As a consequence we can infer that the proposed method is robust with respect to element shape.

\begin{figure}[!htb]
\centering
\begin{tabular}{cc}
\includegraphics[width=0.44\textwidth]{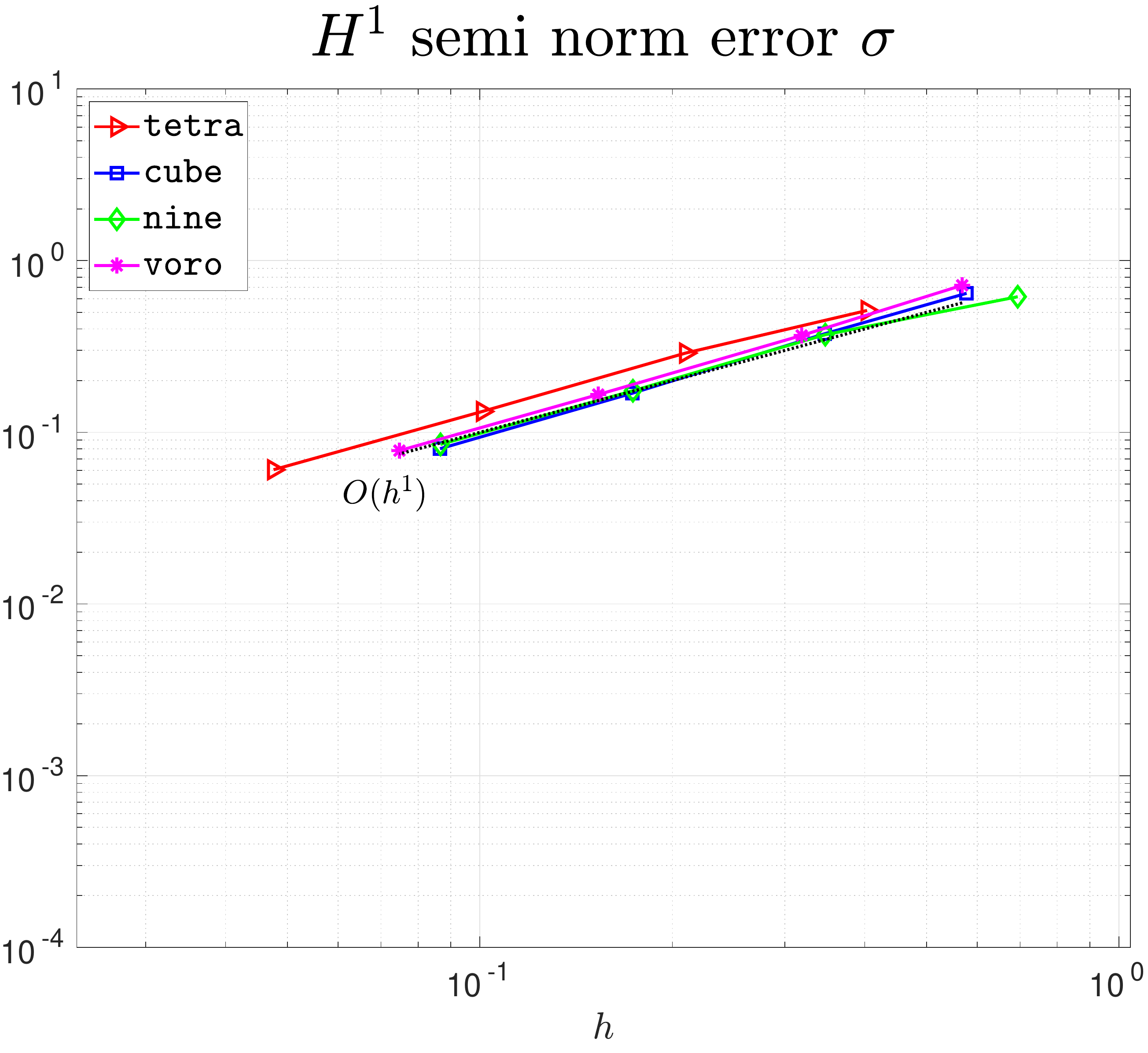}&
\includegraphics[width=0.44\textwidth]{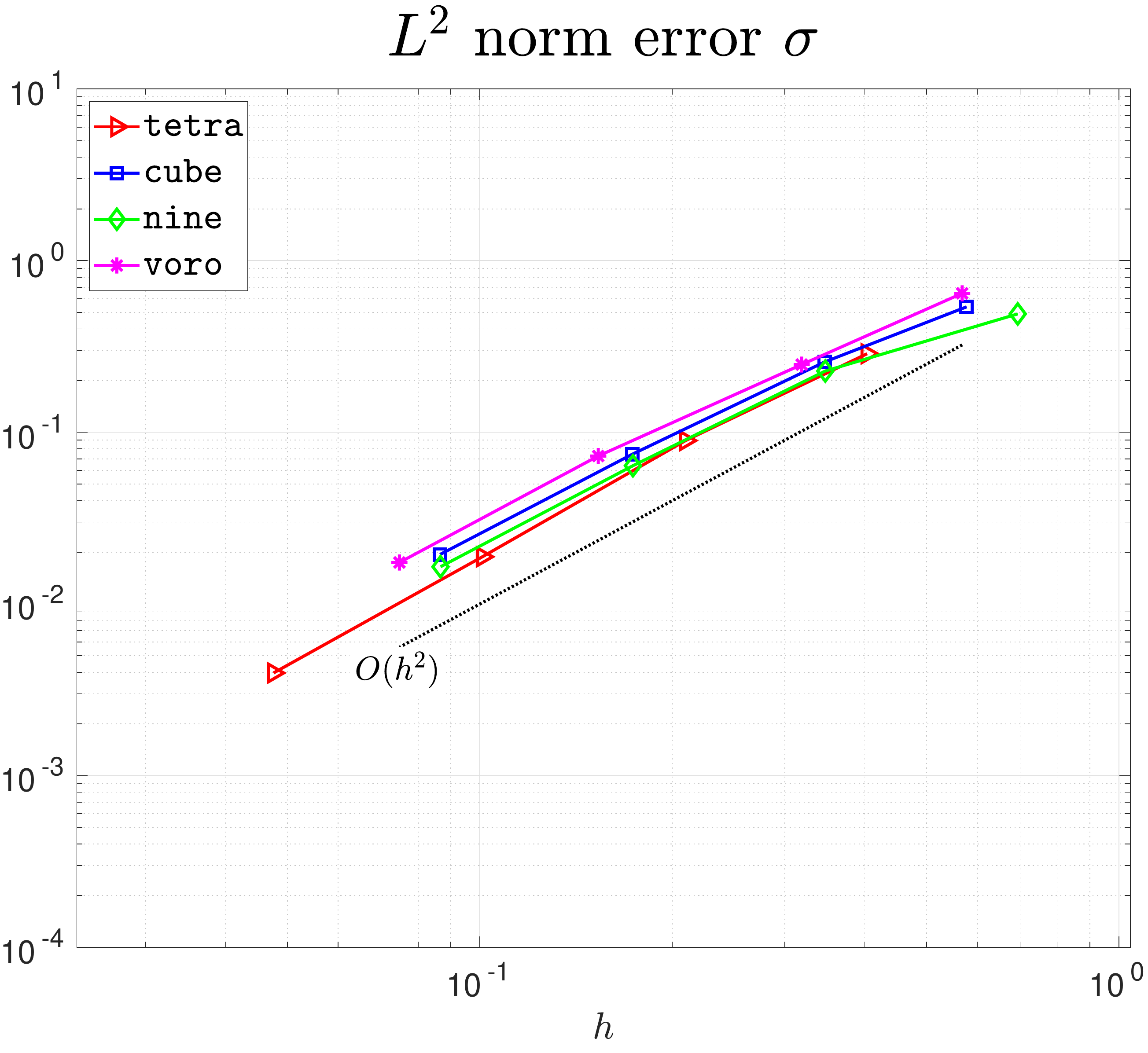}\\
\end{tabular}
\caption{Convergence Analysis: convergence rate of $\sigma$ each mesh type.}
\label{fig:exe2sigma}
\end{figure}

\bibliographystyle{abbrv} 
\bibliography{references}
\end{document}